\DeclareMathOperator{\Bs}{Bs}
\DeclareMathOperator{\PPic}{\mathbf{Pic}}
\DeclareMathOperator{\AAut}{\mathbf{Aut}}
\newcommand{\bfkbar}{\overline{\bfk}}
\newtheorem*{not_and_conv}{Notation and conventions}
\title[Nonexistence of SOD and canonical bundles]
{Nonexistence of semiorthogonal decompositions
and sections of the canonical bundle}
\author{Kotaro Kawatani and Shinnosuke Okawa}
\date{\today}
\subjclass[2010]{Primary 14F05; Secondary 18E30}
\keywords{Derived category, Semiorthogonal decomposition, Canonical bundle}
\address{Department of Mathematics, Graduate School of Science,
Osaka University, Machikaneyama 1-1, Toyonaka, Osaka 560-0043, Japan}
\email{kawatani@math.sci.osaka-u.ac.jp}
\email{okawa@math.sci.osaka-u.ac.jp}
\begin{document}
 
\begin{abstract}
For any admissible subcategory of the bounded derived category of coherent sheaves on a smooth proper variety, we prove that sections of the canonical bundle impose a strong constraint on the supports of the objects of the subcategory or its semiorthogonal complement. We also show that admissible subcategories are rigid under the actions of topologically trivial autoequivalences.
As applications of these results, we prove that the derived category of various minimal varieties in the sense of the minimal model program admit no non-trivial semiorthogonal decompositions, generalizing the result for curves due to the second author to higher dimensions. The case of minimal surfaces is further investigated in detail.
 \end{abstract}
 
\maketitle
\tableofcontents

%
%

\section{Introduction}
Let $X$ be a smooth proper variety, or more generally a Deligne-Mumford stack
(DM stack for short) defined over a field
$ \bfk $.
The bounded derived category of coherent sheaves on $X$,
which will be denoted by
$
 \bfD^{b} \lb \coh X \rb
$
in this paper, has been acquiring considerable attention from both mathematical and physical points of view.

Sometimes
$
 \bfD^{b} \lb \coh X \rb
$,
as a triangulated $\bfk$-linear category, admits a \emph{semiorthogonal decomposition} (\emph{SOD} for short)
into smaller triangulated subcategories. Since SOD is a fundamental structure of triangulated categories and it often arises as the categorical incarnation of the deep geometry of $X$, it is natural to ask if one can understand the general nature of SODs of
$
 \bfD^{b} \lb \coh X \rb
$.

If the Kodaira dimension of $X$ is non-negative, the most important source of SODs is the \emph{minimal model program} (\emph{MMP} for short).
It is conjectured in general and has been verified in some cases
that each step of MMP induces a non-trivial SOD
of $\bfD ^{ b } \lb \coh X \rb$ (see \cite{MR2483950} and references therein).
Note that the conjecture, in particular, would imply:
\begin{conjecture}\label{cj:MMP_implies_SOD_minimal_case}
If $\bfD ^{ b } \lb \coh X \rb$ does not admit any SOD, then $X$ should be minimal.
\end{conjecture}

When $\dim X = 1$, it is shown in \cite[Theorem 1.1]{MR2838062} that $\bfD ^{ b } \lb \coh X \rb$ admits no SOD if and only if $X$ is minimal in the sense of minimal model program ($\iff g ( X ) \ge 1$). This confirms \pref{cj:MMP_implies_SOD_minimal_case} in dimension 1, and also its converse.
Starting from dimension 2, however, the converse is not correct anymore.

Suppose that $X$ satisfies the following condition.
\begin{equation}\label{eq:acyclicity}
 \bR \Gamma (X, \cO_X) \simeq \bfk
\in
 \bfD ^{ b } ( \Spec \bfk ).
\end{equation}
Then any line bundle $L$ on $X$ is an \emph{exceptional object}
(see, e.g., \cite[Definition 1.57]{MR2244106}), so that it induces a non-trivial SOD
$
 \bfD ^{ b } \lb \coh X \rb = \la \la L\ra^{\perp}, \la L\ra \ra
$.
Here
$
 \la L\ra
$
denotes the smallest triangulated subcategory containing
$
L
$,
and
$
\la L\ra^{\perp}
$
is its right orthogonal complement (\cite[Definition 1.42]{MR2244106}).
Although the condition \pref{eq:acyclicity} is typically satisfied by varieties of negative Kodaira dimension,
such as varieties of Fano type in characteristic zero, there are minimal varieties which also satisfy \pref{eq:acyclicity}.
Already in dimension 2, classical Enriques surfaces and Godeaux surfaces are such examples.
Therefore, even for varieties of non-negative Kodaira dimensions, there are more SODs than MMPs\footnote{Recently several groups worked on exceptional objects
of the derived categories of surfaces of general type satisfying \pref{eq:acyclicity}, and discovered
several examples of (quasi-)phantom categories
(\cite{MR3096524}, \cite{MR3062745}, \cite{MR3361723}, and
\cite{MR3077896}, to name a few).
Another remarkable discovery was that one can also produce a counter-example for
the Jordan-H\"older property of SODs on such a surface
(see \cite{MR3177299}. An easier example was found on a rational threefold in \cite{kuznetsov2013simple}).}.

The purpose of this paper is to carry out the initial step toward understanding/classifying, in arbitrary dimension, the SODs of
$
 \bfD^{b} \lb \coh X \rb
$
for $X$ with non-negative Kodaira dimension. We prove two kinds of constraints which should be fulfilled by \emph{any} SOD of
$
 \bfD^{b} \lb \coh X \rb
$.
As an application we prove the non-existence of non-trivial SODs for many $X$ which are (as expected) minimal.

One of the constraints on SODs
is provided by global/local sections of the canonical bundle $\omega _{ X }$.
In this paper, the base locus of the complete canonical linear system will be denoted by
$ \Bs| \omega_X |$.

\begin{theorem}[{$=$ special case of \pref{th:pg positive}}]
Let $X$ be a smooth proper variety and
$
 \bfD ^{ b } \lb \coh X \rb = \la \cA, \cB \ra
$
an SOD. Then at least one of the followings holds.
	\begin{enumerate}[(a)]
	\item
	the support of any object in $ \cB $
	is contained in
	$ \Bs| \omega_X |$.
	
	\item
	the support of any object in $ \cA $
	is contained in
	$ \Bs| \omega_X |$.
\end{enumerate}
\end{theorem}

This will be used to show the non-existence of SOD for some $X$.
In this paper we always use the following general criterion to show the non-existence of SODs.

\begin{proposition}[{$=$special case of \pref{cr:iff}}]\label{pr:iff_intro}
Let $X$ be a smooth proper variety. Consider an SOD
$
 \bfD ^{ b } \lb \coh X \rb = \la \cA,\cB\ra.
$
Then it is trivial, i.e. $\cA=0$ or $\cB=0$,
if and only if
$
 \cA \otimes \omega_X \subset \cA \subset \bfD ^{ b } \lb \coh X \rb
$
holds.
\end{proposition}

As an immediate corollary, we obtain the following theorem.
\begin{theorem}
Let $X$ be a smooth proper variety such that each connected component of $\Bs| \omega_X |$ is contained in an open subset of $X$ on which $\omega_X$ is trivial. Then $\bfD ^{ b } \lb \coh X \rb$ admits no non-trivial SOD. 
\end{theorem}

A special case of this is

\begin{corollary}\label{cr:main_cor}
Let $X$ be a smooth proper variety such that
$\Bs| \omega_X |$ is a finite set (possibly empty).
Then $\bfD ^{ b } \lb \coh X \rb$ has no non-trivial SOD.
\end{corollary}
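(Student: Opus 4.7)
The plan is to deduce the corollary from \pref{th:locally free} by checking its three hypotheses. Since $X$ is a variety, i.e.\ a scheme, the stacky locus $\cS$ is empty and hypothesis (1) holds trivially; and since $X$ is separated of finite type over $\bfk$, its diagonal is a closed (hence affine) immersion, so the cited results of Kawamata and Totaro supply the resolution property and hypothesis (2) holds.

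The content lies in hypothesis (3): the existence of an open neighborhood of $\cS\cup\Bs|\omega_X|=\Bs|\omega_X|$ on which $\omega_X$ is trivial. Since $\Bs|\omega_X|$ is a finite set of closed points by assumption, this reduces to the following elementary fact: any line bundle on a Noetherian scheme trivializes on an open neighborhood of any finite set of closed points. The expected proof is to choose an affine open neighborhood $V=\Spec A$ of $\Bs|\omega_X|=\{p_1,\dots,p_n\}$, write $\omega_X|_V$ as a rank-one projective $A$-module $M$, and use the Chinese Remainder Theorem isomorphism
\[
M/(\mathfrak{m}_{p_1}\cdots \mathfrak{m}_{p_n})M\simeq \bigoplus_i M/\mathfrak{m}_{p_i}M
\]
to produce a single $s\in M$ whose image is a generator in each factor. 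Nakayama's Lemma then shows that $s$ generates $\omega_X$ on some open $U\subset V$ with $\Bs|\omega_X|\subset U$, giving the desired trivialization $\omega_X|_U\simeq \cO_U$.

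The only mildly delicate point in this plan is the existence of an affine open neighborhood of $\Bs|\omega_X|$, which is standard for quasi-projective $X$ but not automatic for general proper varieties. If the paper's convention on \emph{variety} is meant to include non-quasi-projective proper schemes, one may instead bypass \pref{th:locally free} and apply \pref{th:pg positive} directly: it forces one semiorthogonal summand to consist entirely of complexes supported on the finite set $\Bs|\omega_X|$, and one then combines the resolution property with a local argument around each of these points to conclude that such a summand must vanish. This final step—ruling out summands supported on finitely many points—is where the main technical content of the reduction lives.
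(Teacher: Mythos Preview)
Your approach is correct and matches the paper's: the corollary is stated there simply as ``a special case of \pref{th:locally free}'', with no further argument given.

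One remark: your concern about affine opens and non-quasi-projective varieties is a red herring, and the Chinese Remainder argument is unnecessary. For hypothesis (3), since $\Bs|\omega_X|=\{p_1,\dots,p_n\}$ is finite and any line bundle is locally trivial, choose for each $p_i$ an open $U_i\ni p_i$ with $\omega_X|_{U_i}\simeq\cO_{U_i}$; shrinking if needed, arrange the $U_i$ to be pairwise disjoint. Then $U=\bigsqcup_i U_i$ is an open neighborhood of $\Bs|\omega_X|$ on which $\omega_X$ is trivial. No affine containment of the full finite set is required, so the deduction from \pref{th:locally free} works for arbitrary smooth proper varieties without the detour through \pref{th:pg positive} that you sketch at the end.
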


\noindent
In particular the global generation of the canonical bundle
implies the non-existence of non-trivial SODs.
Examples of such varieties are submanifolds of abelian varieties (\cite[Lemma 3.11]{MR0360582})
and complete intersections with non-negative Kodaira dimensions in projective spaces.
This is a far generalization of \cite[Theorem 1.1]{MR2838062}.

The other constraint on SODs is the rigidity under the action of
topologically trivial autoequivalences.

\begin{theorem}[{$=$ a special case of \pref{th:rigidity}}]
Let $X$ be a smooth projective variety over $\bfk$ and
$
\bfD ^{ b } \lb \coh X \rb
=
\la \cA, \cB \ra
$
an SOD.
Then for any line bundle
$
 L
$
satisfying
$
 [ L ] \in \PPic _{ X / \bfk }^0
$, we have the equality of subcategories
$
 \cA \otimes L = \cA \subset \bfD ^{ b } \lb \coh X \rb.
$
\end{theorem}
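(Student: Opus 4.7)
The plan is to exploit that $T := \PPic_{X/\bfk}^0$ is a \emph{connected} group scheme. Writing $\Phi_L := (-)\otimes L$, I would consider the set
\[
S := \{L \in T(\bfk) : \Phi_L(\cA) \subseteq \cA\}.
\]
Because $\Phi$ is a group homomorphism from $T(\bfk)$ to $\AAut(D(X))$, the set $S$ is a submonoid containing the identity. The crucial observation is that once $S$ is known to contain a Zariski-open neighborhood of the identity, the intersection $S \cap S^{-1}$ becomes an open (hence also closed) subgroup of the connected scheme $T$, and therefore equals $T$; in particular $S = T$, and applying the same conclusion to $L^{-1}$ yields the desired equality $\Phi_L(\cA) = \cA$.

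The second step is to replace the infinite family of conditions defining $S$ by a finite one. For a projective scheme $X$, the category $D^b(\coh X)$ admits a classical generator, and applying the admissible projection functors onto the two factors produces classical generators $G \in \cA$ and $B \in \cB$. Then
\[
L \in S \iff G \otimes L \in \cA \iff \Hom^i(G \otimes L, B) = 0 \text{ for all } i \in \Z,
\]
and only finitely many $i$ contribute because $G$ and $B$ have bounded cohomological amplitude, a bound preserved under tensoring by line bundles.

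For the final step I would form the Poincar\'e line bundle $\cP$ on $X \times T$, passing to an \'etale cover of $T$ if needed (harmless, since openness is fppf-local on the base). The relative Ext sheaves of $G \boxtimes \cO_T \otimes \cP$ and $B \boxtimes \cO_T$ along the projection $p_T$ are coherent on $T$ and specialize at $L \in T$ to $\Ext^i(G\otimes L, B)$. Upper semicontinuity makes each of their vanishing loci Zariski-open, and at $L = \cO_X$ every such $\Ext$ vanishes because $G \in \cA$ and $B \in \cB$. Intersecting over the finite range of relevant $i$ produces the open neighborhood of the identity contained in $S$ that the first step requires.

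The principal obstacle is the existence of a classical generator inside the admissible subcategory $\cA$, which is precisely what reduces an \emph{a priori} infinite family of vanishing conditions to a finite one and thereby makes the openness argument possible. Once this input is secured, the rest is a routine combination of upper semicontinuity for relative $\Ext$ and the elementary topology of connected algebraic groups.
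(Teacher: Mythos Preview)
Your strategy is essentially the paper's: reduce the containment $\cA\otimes L\subset\cA$ to a single vanishing condition using classical generators of $\cA$ and $\cB$, show this condition is open in the parameter by semicontinuity of relative $\Ext$, and conclude by connectedness of $\PPic^0$. The paper packages the final step slightly differently---it partitions $T(\bfk)$ into the loci $U(\cA')=\{t:\cA\otimes M_t=\cA'\}$, proves each is open, and invokes connectedness directly---but your open-subgroup argument via $S\cap S^{-1}$ is an equivalent variant. The paper also sidesteps the Poincar\'e bundle entirely by working with the chain of families in the definition of algebraic equivalence, which avoids your \'etale-cover workaround, but that is cosmetic.

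There is one slip that actually breaks the argument as written. With the convention $D(X)=\langle\cA,\cB\rangle$ meaning $\Hom(\cB,\cA)=0$, one has $\cA=\cB^{\perp}$, so membership in $\cA$ is detected by vanishing of morphisms \emph{from} $\cB$. The correct criterion is
\[
G\otimes L\in\cA\iff \RHom(B,\,G\otimes L)=0,
\]
not $\RHom(G\otimes L,B)=0$. With your stated direction the condition fails already at $L=\cO_X$, since $\Hom(G,B)$ has no reason to vanish, and the open locus would not contain the identity. Swapping the arguments fixes everything; the semicontinuity argument is insensitive to which slot carries the twist (the paper's Lemma on semicontinuity is stated precisely in the form $\RHom(b,\,a\otimes M_t)$).
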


\noindent
Immediately we obtain
\begin{corollary}
[$=$ \pref{cr:trivial_chern_class}]
Let $X$ be a smooth projective variety satisfying
$
 [ \omega_X ] \in \PPic _{ X / \bfk }^0
$.
Then $\bfD ^{ b } \lb \coh X \rb$ admits no SOD.
\end{corollary}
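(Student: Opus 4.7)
The plan is to suppose $D(X) = \langle \cA, \cB \rangle$ is a semiorthogonal decomposition and, using the rigidity theorem together with Serre duality, to conclude that $\cA$ and $\cB$ are fully orthogonal; the connectedness of $X$ then forces one of them to vanish.

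First, I would observe that the rigidity theorem applied to $L = \omega_X$ gives $\cA \otimes \omega_X = \cA$, and the analogous identity $\cB \otimes \omega_X = \cB$ follows quickly: for $b \in \cB$ and $a \in \cA$, the space $\Hom(b \otimes \omega_X, a) = \Hom(b, a \otimes \omega_X^{-1})$ vanishes because rigidity applied to $\omega_X^{-1} \in \PPic^0_{X/\bfk}$ gives $a \otimes \omega_X^{-1} \in \cA$.

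Next, using the Serre functor $S_X = (-) \otimes \omega_X[\dim X]$, a standard Serre-duality computation yields $\cA^{\perp} = {}^{\perp} S_X(\cA) = S_X({}^{\perp} \cA) = S_X(\cB)$. Since $\cA$ is admissible, $D(X) = \langle \cA^{\perp}, \cA \rangle = \langle S_X(\cB), \cA \rangle$ is an SOD, and by the first step $S_X(\cB) = (\cB \otimes \omega_X)[\dim X] = \cB$ as a subcategory (triangulated subcategories being closed under shifts). Thus both $\langle \cA, \cB \rangle$ and $\langle \cB, \cA \rangle$ are SODs of $D(X)$, which gives $\Hom(\cB, \cA) = 0$ and $\Hom(\cA, \cB) = 0$ simultaneously. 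In other words, $D(X)$ is the orthogonal direct sum $\cA \oplus \cB$.

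Finally, the connectedness of $X$ rules out any such nontrivial orthogonal decomposition. Writing $\cO_X = A \oplus B$ with $A \in \cA$, $B \in \cB$ and using that $\End(\cO_X) = \bfk$ has no nontrivial idempotents, one summand vanishes---say $\cO_X \in \cA$. For any closed point $x$, the indecomposable object $\cO_x$ lies entirely in $\cA$ or entirely in $\cB$, and the nonvanishing of $\Hom(\cO_X, \cO_x)$ combined with $\Hom(\cA, \cB) = 0$ forces $\cO_x \in \cA$. Any nonzero object $F \in \cB$ would have nonempty support, hence $R\Hom(F, \cO_x) \ne 0$ for some $x$ in that support; but this contradicts $\Hom(\cB, \cA) = 0$. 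So $\cB = 0$ and the SOD is trivial. The main delicate point I anticipate is the identification $\cA^{\perp} = S_X(\cB)$ in the paper's SOD convention, which requires a careful unwinding of Serre duality; once in place, the remainder combines formal properties of admissible subcategories with connectedness of $X$.
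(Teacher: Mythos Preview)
Your argument is correct and follows essentially the paper's route: rigidity gives $\cA \otimes \omega_X = \cA$, Serre duality then upgrades the SOD to an OD, and connectedness of $X$ rules out any nontrivial OD. The paper packages the last two steps into a standing corollary (an SOD with $\cA\otimes\omega_X\subset\cA$ is automatically trivial), proving the OD step by the one-line computation $\Hom(a,b)\simeq\Hom(b,a\otimes\omega_X[\dim X])^\vee=0$ in place of your more elaborate admissibility identification $\cA^\perp=S_X(\cB)$.
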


\begin{corollary}\label{cr:the_converse_does_not_hold}
Let $X$ be the product of a bielliptic surface with an abelian variety.
Then $\bfD ^{ b } \lb \coh X \rb$ admits no SOD.
\end{corollary}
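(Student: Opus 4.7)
My plan is to apply the preceding corollary (\pref{cr:trivial_chern_class}), so the task reduces to verifying that $[\omega_X] \in \PPic^0_{X/\bfk}$. Writing $X = S \times A$ with $S$ a bielliptic surface and $A$ an abelian variety, the triviality of $\omega_A$ together with the K\"unneth-type formula $\omega_X \simeq \pi_S^*\omega_S \otimes \pi_A^*\omega_A$ yields $\omega_X \simeq \pi_S^*\omega_S$. Since pullback along $\pi_S$ induces a morphism of group schemes $\PPic_{S/\bfk} \to \PPic_{X/\bfk}$ that must send the identity component into the identity component, it suffices to prove $[\omega_S] \in \PPic^0_{S/\bfk}$.

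The key step---and the main obstacle---is upgrading the classical fact that $\omega_S$ is torsion in $\mathrm{Pic}(S)$ to the stronger statement $[\omega_S] \in \PPic^0_{S/\bfk}$: these two conditions differ in general by the torsion subgroup of the N\'eron--Severi group, and the analogous statement indeed \emph{fails} for classical Enriques surfaces, consistent with the existence of non-trivial SODs on them. So the argument must exploit a feature of bielliptic surfaces that Enriques surfaces lack, namely the one-dimensional Albanese.

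To handle this, I would use the standard presentation $S = (E_1 \times E_2)/G$, with $E_1, E_2$ elliptic curves and the finite group $G$ acting by translations on $E_1$ and by order-preserving automorphisms on $E_2$. The first projection descends to the Albanese map $\alpha\colon S \to E := E_1/G$, an elliptic fibration onto an elliptic curve. Since $G$ acts trivially on $\omega_{E_1}$ (translations have trivial linearization) and via a character $\chi\colon G \to \bfk^{*}$ on $\omega_{E_2}$, a direct descent argument identifies $\omega_S$ with the pullback $\alpha^* M$ of the line bundle $M$ on $E$ determined by the $G$-equivariant line bundle $(\cO_{E_1}, \chi)$. Since $\chi$ has finite order, $M$ is torsion on the elliptic curve $E$, hence of degree zero, so $[M] \in \PPic^0_{E/\bfk}$. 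Pulling back through $\alpha$ gives $[\omega_S] \in \PPic^0_{S/\bfk}$, completing the reduction to \pref{cr:trivial_chern_class}.
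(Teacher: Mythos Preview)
Your proof is correct and follows essentially the same route as the paper: reduce to \pref{cr:trivial_chern_class} via $\omega_X \simeq \pi_S^*\omega_S$, and then show $[\omega_S]\in\PPic^0_{S/\bfk}$ by exhibiting $\omega_S$ as the pullback of a torsion line bundle along the Albanese fibration $S\to E$. The only difference is in how that last identification is made: the paper (in the Proposition of \pref{sc:Surfaces}) invokes the Kodaira--Bombieri--Mumford canonical bundle formula \eqref{eq:Kodaira} for the fibration $S\to E$ (which has no multiple fibers) together with the projection formula, whereas you use the explicit presentation $S=(E_1\times E_2)/G$ and descent of the character $\chi$; the paper's version is slightly slicker and also covers quasi-bielliptic surfaces in positive characteristic uniformly, but both arguments are equally valid here.
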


\noindent
Since the canonical bundle of the variety $X$ in
\pref{cr:the_converse_does_not_hold}
admits no global section (in another word
$
\Bs | \omega_X |
=
X
$),
the converse of \pref{cr:main_cor} does not hold at all in dimensions at least two.

In \pref{sc:Surfaces} we closely study SODs of minimal surfaces.
We obtain satisfactory results for the cases
$
 \kappa ( X )
 =
 0
$
and
$1$,
where
$
 \kappa ( X )
$
is the Kodaira dimension of $X$. Below is the summary.

\begin{theorem}
Let $X$ be a smooth projective minimal surface.
\begin{enumerate}
\item
If $\kappa(X)=0$, then $\bfD ^{ b } \lb \coh X \rb$ admits a non-trivial SOD if and only if $X$ is a classical Enriques surface.

\item
If $\kappa(X)=1$ and $p_g (X) > 0$, then
$\bfD ^{ b } \lb \coh X \rb$ admits no SOD.
\end{enumerate}
\end{theorem}

\noindent
In the study of SODs of (quasi-)elliptic fibrations (\pref{sc:kappa=1}),
\pref{th:rigidity} will be effectively used on multiple fibers.

For the case $\kappa(X)=2$ we have to put a rather strong assumption
to prove the non-existence of SODs. We believe that it is a technical assumption and
should be eventually removed.

\begin{theorem}[$=$ \pref{th:negative_definite}]
Let $X$ be a minimal smooth projective surface of general type with
$
\dim_{\bfk} H^0(X, \omega_X) > 1
$.
Assume the following condition (*).
\begin{quote}
(*)
For any one-dimensional connected component $Z \subset \Bs| \omega_X |$,
its intersection matrix is negative definite.
\end{quote}
Then $\bfD ^{ b } \lb \coh X \rb$ admits no SOD.
\end{theorem}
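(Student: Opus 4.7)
The plan is to reduce to the setting of \pref{th:locally free}, using hypothesis~(*) to treat the one-dimensional components of $\Bs|\omega_X|$.

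I would first apply \pref{th:pg positive} to an arbitrary SOD $D(X) = \la \cA, \cB\ra$. Since $X$ is a scheme, $\cS = \emptyset$; after possibly swapping $\cA$ and $\cB$ I may assume that $\bfk(x) \in \cA$ for every closed point $x \notin \Bs|\omega_X|$, and consequently that the support of every object of $\cB$ is contained in $\Bs|\omega_X|$. It suffices to show $\cB = 0$, so suppose for contradiction that $F \in \cB$ is non-zero.

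Decompose $\Bs|\omega_X| = \Bs_0 \sqcup Z_1 \sqcup \cdots \sqcup Z_\ell$, where $\Bs_0$ is the finite set of isolated zero-dimensional components and each $Z_i$ is a one-dimensional connected component with negative definite intersection matrix by~(*). For each $x \in \Bs_0$, I choose an open neighborhood $U_x$ of $x$ disjoint from the $Z_i$ and the other isolated points, on which $\omega_X$ is trivial (any line bundle trivializes after sufficient shrinking). The argument of \pref{th:locally free}, run on $U_x$, forces $F|_{U_x} = 0$, so the support of $F$ is disjoint from $\Bs_0$.

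The crux is the case where the support of $F$ meets some one-dimensional component; decomposing $F$ along the connected components of its support, I may assume the support is contained in a single $Z_i = C_1 \cup \cdots \cup C_k$ with negative definite intersection matrix $(C_j \cdot C_l)_{j,l}$. I then use the negative definite intersection form on $Z_i$ to trivialize, in an appropriate formal or \'etale-local sense, the canonical bundle $\omega_X$ near $Z_i$: the theorem of Grauert--Artin provides a proper birational contraction $\pi \colon X \to \bar X$ sending $Z_i$ to a normal singular point, and the (invertible) inverse of the intersection matrix yields rational coefficients defining an effective divisor $E$ supported on $Z_i$ such that $\omega_X + E$ is numerically trivial on $Z_i$. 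On a suitable formal or \'etale neighborhood of $Z_i$---possibly after passing to a root stack in the spirit of \cite{Kawamata_LCBMDC}---the hypotheses of \pref{th:locally free} become available, and a local version of its argument forces the restriction of $F$ to vanish, contradicting $F \ne 0$.

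The main obstacle is the construction just sketched: producing a rigorous formal or \'etale trivialization of $\omega_X$ near $Z_i$ using only the negative definite hypothesis, and verifying that the (global) argument of \pref{th:locally free} adapts to this local setting. Negative-definiteness is precisely what makes the intersection-theoretic construction of $E$ possible, and establishing that \pref{th:locally free} survives \'etale-localization along a negative-definite contractible configuration is the key technical step.
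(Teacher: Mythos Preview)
Your proposal has a genuine gap at the decisive step. The attempt to ``trivialize $\omega_X$ near $Z_i$'' using Grauert--Artin contraction, a numerically trivial $\bQ$-divisor $K_X+E$, or a root stack does not go through: numerical triviality of $(K_X+E)|_{Z_i}$ says nothing about triviality of $\omega_X$ on any formal or \'etale neighborhood, and in general $\omega_X$ is \emph{not} trivial there (even when every component is a $(-2)$-curve, extending a trivialization from $Z_i$ to each thickening $mZ_i$ is obstructed by cohomology classes that do not vanish automatically). So the reduction to \pref{th:locally free} that you sketch cannot be completed as stated.

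The paper avoids this entirely. It does not trivialize $\omega_X$ near the one-dimensional components; instead it works in the local framework of \pref{sc:Local_situation}. By \pref{lm:restriction} and \pref{cr:induction}, it suffices to show that for every reduced connected one-cycle $W\subset\Bs|\omega_X|$ the category $\cC_W$ of objects supported on $W$ has no SOD, and this is done by induction on the number of irreducible components of $W$. The inductive step requires a compatible family $s=(s_m)\in\varprojlim H^0(mW,\omega_X|_{mW})$ that is generically nonvanishing on some component --- a \emph{section}, not a trivialization. The paper produces $s_1$ via Riemann--Roch and the $2$-connectedness of $K_X$ (here $p_g\ge 2$ enters), and lifts $s_{m-1}$ to $s_m$ by proving
\[
H^1\bigl(W,\cO_W(K_X-(m-1)W)\bigr)\;\simeq\;H^0\bigl(W,\cO_W(mW)\bigr)^{\vee}\;=\;0,
\]
which is exactly where the negative-definiteness hypothesis $W^2<0$ is used. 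This is \pref{lm:non-vanishing}; once it is in hand, \pref{pr:closed_points_local} and \pref{cr:induction} peel off one irreducible component at a time. Your intuition that one must work locally around $Z_i$ and exploit negative definiteness is correct, but the mechanism is cohomological lifting of sections to thickenings, not geometric trivialization of the line bundle.
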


\noindent
If there exists a local section of $\omega_X$ defined on an infinitesimal neighborhood
of $\Bs| \omega_X |$, similar arguments as in the proof of \pref{th:pg positive}
work.
See \pref{sc:Local_situation} and \pref{cr:induction}
for the precise statements.
This will be effectively used in the proof of \pref{th:negative_definite}.

Our results for minimal surfaces apparently indicate that
the existence/non-existence of SOD corresponds to the conditions
$p_g>0/=0$. This resembles the (conjectural) dichotomy of infinite/finite generation of the Chow group of zero-cycles on surfaces; i.e., a theorem of Mumford \cite{MR0249428} and the Bloch conjecture \cite{MR0371891}.

In \pref{sc:Toward_higher_dimensions},
we briefly treat varieties of dimensions greater than $2$.
There is nothing new in the case $\kappa=0$,
and we discuss the case $\kappa=1$.
A new difficulty, which does not appear in dimension $2$, then shows up.
We illustrate the issue by an example due to Keiji Oguiso.

Finally, in \pref{sc:twisted_sheaves} we generalize our results to the category of
twisted sheaves. It turns out that our arguments go through without
essential change, though there are a couple of technical issues to be settled.

Our results so far seem to imply the following general principle:
\begin{quote}
For "most" minimal $X$ with non-negative Kodaira dimension, there is no SOD of
$
 \bfD ^{ b } \lb \coh X \rb
$; i.e., the converse of \pref{cj:MMP_implies_SOD_minimal_case} is true.
\end{quote}

The precise meaning of "most", in particular in higher dimensions, is very vague at this moment and remains to be made precise. Note that the principle can be regarded as a special case of the following assertion:
\begin{quote}
For "most" $X$ with non-negative Kodaira dimension, 
there are no SODs of
$
 \bfD^{b} \lb \coh X \rb
$
other than those originating from the MMPs starting from $X$.
\end{quote}
This generalized assertion can be checked for some non-minimal surfaces, and it will be discussed in future work(s).

\begin{not_and_conv}
The base field $\bfk$ will be assumed to be algebraically closed, without loss of generality. In fact, if $\bfk$ is not algebraically closed, any SOD of
$
 \bfD ^{ b } \lb \coh X \rb
$
induces an SOD of
$
 \bfD \lb X \otimes_{\bfk} \bfkbar \rb
$,
where $\bfkbar$ is the algebraic closure of $\bfk$, which is invariant under the action of the absolute Galois group
$
 \Aut \lb \bfkbar / \bfk \rb
$
(\cite[Proposition 5.1]{MR2801403}).
Since $X$ is connected, the absolute Galois group
acts transitively on the set of connected components
$
 \pi _{ 0 } \lb X \otimes _{ \bfk } \bfkbar \rb
$.
Therefore in order to show the non-existence of SOD for
$
 \bfD ^{ b } \lb \coh X \rb
$, it is enough to show it for
$
 \bfD ^{ b } ( \Xbar )
$
for a connected component
$
 \Xbar \subset X \otimes_{\bfk} \bfkbar
$.

Any Deligne-Mumford stack in this paper will be assumed to be connected and
smooth over $\bfk$, unless otherwise stated.

The following standard symbols will be used.
\begin{itemize}
\item
(geometric genus)
$
 p _{ g } ( X )
 = \dim H^0 ( X, \omega _X )
 = \dim \Hom _{ \bfD ^{ b } \lb \coh X \rb } ( \cO_X, \omega_X )
$

\item
(irregularity)
$
 q ( X ) = \dim H^1 ( X, \cO _X )
$

\item
(canonical sheaf)
$
 \omega _{ X }
$

\end{itemize}
\end{not_and_conv}

%
%
\begin{acknowledgements*}
The authors would like to thank Yujiro Kawamata and Mihnea Popa
for their corrections and comments about stacks,
and Keiji Oguiso for useful discussions and providing them with the example.
They are indebted to Marcello Bernardara for suggesting generalization to
twisted sheaves, and Kazuhiro Konno for informing them of folklore conjectures
on the canonical linear systems of surfaces of general type
and for many valuable discussions.
They would also like to thank Alexey Bondal, Andrei
C\u{a}ld\u{a}raru, Daniel Huybrechts, and Alexander Kuznetsov for stimulating discussions about support of
objects in the derived category.
They would also like to thank Pawel Sosna for carefully reading the first draft
and providing them with many corrections and suggestions,
David Favero for communicating to them the paper
\cite{rosay2009some}, and Hiroyuki Minamoto for informing them of the concise proof of \pref{lm:classical_generator} which is reproduced below in this paper.

This work was initiated while the second author was visiting the University of Michigan. He would like to thank Mircea Musta\c{t}\u{a} for his support and hospitality.
The first author was partially supported by Grant-in-Aid for Scientific Research (S) (No. 22224001),
and the second author was partially supported by JSPS fellowship (22-849),
Grant-in-Aid for Scientific Research (
60646909,
16H05994,
16K13746,
16H02141,
16K13743,
16K13755,
16H06337)
and the Inamori Foundation.
\end{acknowledgements*}

%
%

\section{Preliminaries}

\subsection{Deligne-Mumford stack}

We begin with a couple of notions about Deligne-Mumford (DM for short) stacks.

\begin{definition}
Let $X$ be a DM stack. A \emph{closed point} of $X$ is an irreducible reduced closed substack
of dimension zero.
For a closed point
$
 \iota \colon x \hto X
$, the sheaf $\iota_{*}\cO_x$ will be denoted by $\bfk ( x )$.
This situation will be simply denoted by $x\in X$.
\end{definition}

\begin{definition}\label{df:base_locus}
Let $X$ be a smooth DM stack. The
\emph{base locus of the complete canonical linear system}
$
 \Bs | \omega_X |
$
is the closed substack of $X$ defined by the ideal
\begin{align}
 \Image \lb \Hom ( \omega _{ X } ^{ - 1 }, \cO _X ) \otimes \omega _{ X } ^{ - 1 }
 \xto[]{\ev}
 \cO _X \rb \subset \cO _X.
\end{align}
\end{definition}
See \cite[Application (14.2.7)]{MR1771927} for the correspondence between
closed substacks and coherent ideal sheaves.

\begin{definition}\label{df:stacky_locus}
Let $X$ be a smooth proper DM stack. By \cite[Corollary 1.3.(1)]{MR1432041},
$X$ admits a coarse moduli algebraic space
$
 \pi \colon X \to |X|
$.
The \emph{stacky locus}, which will be denoted by
$\cS\subset X$,
is the complement of the maximal open substack of $X$ on which
$\pi$ is an isomorphism to its image.
A closed point
$
 \iota \colon x \hookrightarrow X
$
is said to be \emph{non-stacky} if
$
 \iota
$
factors through
$
 X \setminus \cS
$.
\end{definition}

\begin{definition}\label{df:support}
Let $X$ be a smooth DM stack, and $E\in \bfD ^{ b } \lb \coh X \rb$ a bounded complex of
coherent sheaves on $X$. The \emph{support of $E$}
is the union of the supports of its cohomology sheaves: i.e.,
$
 \Supp{E} = \bigcup_i \Supp{ \cH ^i(E)} _{ \mathrm{red} }.
$
By definition,
$
 \Supp{  E  }
$
is a closed substack of
$
 X
$.
If
$
 \Supp  E 
$
is contained in a closed substack
$
 Z \subset X
$,
we say that\emph{
$
 E
$
is supported in
$
 Z
$}.
\end{definition}

\begin{remark}\label{rm:rouquier}
The support of $E$ can be alternatively defined as the
complement of the maximal open substack of $X$ on which
$E$ is zero.

If $E$ is a coherent sheaf, we can introduce the natural stack structure on
$
 \Supp E
$
(which is not necessarily reduced)
in such a way that $E$ is isomorphic to
the pushforward of a coherent sheaf on $\Supp E$ (see \cite[Section 1.1]{Huybrechts-Lehn}).
In general, let $Z\subset X$ be a reduced closed substack and
$
 E \in \bfD ^{ b } \lb \coh X \rb
$
a bounded complex of coherent sheaves supported in $Z$.
Then by \cite[Lemma 7.40]{MR2434186}
there exists a positive integer $n>0$ and a
bounded complex of coherent sheaves
$
 E ' \in \bfD ^{ b } \lb \coh n Z \rb
$
such that
$
 E \simeq \iota_*E'
$,
where
$
 \iota \colon n Z \hto X
$
is the natural closed immersion.
The proof uses the Artin-Rees lemma in an essential way, so that
we do not have a control over the value of $n$.
If we could introduce a sufficiently thin stack structure on the support of complexes,
that would be useful to improve the results of this paper.
\end{remark}

\begin{lemma}\label{lm:invariant}
Let $X$, $E$, and $Z$ be as in \pref{rm:rouquier}.
Suppose that $L$ is a line bundle on $X$ which is trivial on
an open neighborhood of each connected component of $Z$. Then
$
 E \simeq E\otimes L.
$
\end{lemma}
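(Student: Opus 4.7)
The plan is to reduce to the statement that tensoring by a line bundle which is trivial on a thickening of the support of $E$ does nothing, and then invoke the projection formula.

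First I would apply \pref{rm:rouquier} to the situation: since $E$ is a bounded complex of coherent sheaves supported in the reduced closed substack $Z$, there exist a positive integer $n$ and a bounded complex $E' \in D(nZ)$ such that $E \simeq \iota_{*} E'$, where $\iota \colon nZ \hto X$ is the natural closed immersion. The crucial observation is that the closed substack $nZ$ has the same underlying point set as $Z$; in particular, if $U \subset X$ is an open neighborhood of $Z$ on which $L$ is trivial, then $nZ$ is a closed substack of $U$, and therefore $\iota^{*} L \simeq \cO_{nZ}$.

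Next I would apply the projection formula for the closed immersion $\iota$. Since $\iota$ is a closed immersion of smooth (resp.\ local complete intersection) DM stacks and $E'$ is a bounded complex, the derived projection formula gives a natural isomorphism
\[
\iota_{*} E' \otimes^{L} L \simeq \iota_{*} \lb E' \otimes^{L} \iota^{*} L \rb.
\]
Substituting $\iota^{*} L \simeq \cO_{nZ}$ and using $E \simeq \iota_{*} E'$, the right hand side becomes $\iota_{*} E' \simeq E$, which yields the desired isomorphism $E \otimes L \simeq E$.

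The only point needing some care, and what I would regard as the main (mild) obstacle, is checking that the projection formula is available in this generality: $\iota$ is a closed immersion of smooth DM stacks, $L$ is a line bundle (hence perfect), and $E'$ is a bounded complex of coherent sheaves on $nZ$. This is a standard fact once one has a reasonable six functor formalism for DM stacks, but in the absence of a specific reference in the excerpt it might be worth stating it explicitly (possibly replacing $E'$ by a finite complex of $\iota^{*}$-acyclic objects, or simply using that tensoring with the invertible sheaf $L$ is exact, so the untwisted projection formula on each cohomology sheaf suffices after a standard spectral sequence / truncation argument).
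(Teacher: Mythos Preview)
Your argument is correct, but the paper takes a lighter route that avoids invoking \pref{rm:rouquier} altogether. Instead of pushing forward from a thickening $nZ$ via a closed immersion, the paper uses the \emph{open} immersion $\iota\colon U\hookrightarrow X$ of a neighborhood on which $L$ is trivial: since $E$ is supported in $Z\subset U$, the unit map $E\to\iota_*\iota^*E$ is an isomorphism, and likewise for $E\otimes L$; then one simply observes $\iota_*\iota^*E \simeq \iota_*(\iota^*E\otimes L|_U)\simeq \iota_*\iota^*(E\otimes L)$. This sidesteps the Artin--Rees input hidden in \pref{rm:rouquier} and makes the projection formula trivial (it is the ordinary one for open immersions and line bundles). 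Your approach works too, but note a small slip in your justification of the projection formula: $nZ$ need not be smooth or lci, so the parenthetical about ``closed immersion of smooth (resp.\ local complete intersection) DM stacks'' is beside the point. What actually makes the projection formula immediate here is exactly what you say at the end --- $L$ is invertible, so $(-)\otimes L$ is exact and the formula holds on the nose, no derived subtleties required.
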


\begin{proof}
If
$
 Z _{ 1 }, \dots, Z _{ m }
$
are the connected components of $Z$, then there are objects
$
 E _{ i }
$
whose supports are contained in $Z _{ i }$ and
$
 E \simeq \oplus _{ i = 1 } ^{ m } E _{ i }
$.
Therefore we may and will assume that $Z$ itself is connected in the rest of the proof.

Let
$
 \iota \colon U \hto X
$
be an open neighborhood of $Z$ on which
$L$ is trivial. Since $E$ is supported in $Z$, the natural morphism
$
 E \to \iota_*\iota^*E
$
is an isomorphism. The same holds for $ E \otimes L $, and hence
\begin{equation}
 E
 \simeq \iota_*\iota^*E
 \simeq \iota_*(\iota^*E \otimes L|_U)
 \simeq \iota_*\iota^*(E \otimes L)
 \simeq E \otimes L.
\end{equation}
\end{proof}

Next we recall the Serre duality for tame DM stacks. Recall that a DM stack over a field is
\emph{tame} if the order of the stabilizer groups are not divisible by the characteristic of the base field.
In this paper the dualizing sheaf of $X$ will be denoted by $\omega_X$.

\begin{fact}
Let $X\to\Spec \bfk$ be a tame smooth proper DM stack of dimension $n$ over a field $\bfk$.
Then $- \otimes _{ X } \omega_X [ n ]$ is a Serre functor of
$
 \bfD ^{ b } \lb \coh X \rb
$.
\end{fact}

We will use the following useful lemma from \cite[Proposition 1.5]{Bondal-Orlov_semiorthogonal}.
\begin{lemma}\label{lm:useful}
Let $X$ be a DM stack and $x\in X$ a non-stacky closed point.
Take an upper bounded complex of coherent sheaves
$
 E \in \bfD ^{ - } \lb \coh X \rb
$.
If
$
 x \in \Supp E
$, then
$
 \Hom ( E, \bfk ( x ) [ i ]  ) \neq 0
$
for some
$
 i \in \bZ
$. Moreover if $X$ is tame, smooth, and proper, then
$
 \Hom ( \bfk ( x ) [ j ] , E ) \neq 0
$
also holds for some
$
 j \in \bZ
$.
\end{lemma}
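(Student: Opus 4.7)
The plan is to prove the two assertions in turn. For the first claim—the existence of a nonzero morphism $E \to \bfk(x)[i]$—let $k \in \bZ$ be the largest integer with $x \in \Supp \cH^k(E)$; this exists because $E$ is bounded and $x \in \Supp E$. Since $x$ is non-stacky, one may choose a Zariski open neighborhood $U$ of $x$ that is an affine smooth scheme and on which $\cH^j(E)$ vanishes for every $j > k$. Then $E|_U \simeq \tau^{\leq k}(E|_U)$, and the canonical truncation yields a morphism $E|_U \to \cH^k(E)[-k]|_U$ in $D(U)$. Because $\cO_{X,x}$ is a regular local $\bfk$-algebra with residue field $\bfk$ and $\cH^k(E)_x$ is a nonzero finitely generated module, Nakayama's lemma furnishes, after possibly shrinking $U$, a surjection $\cH^k(E)|_U \twoheadrightarrow \bfk(x)$. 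The composite $\varphi \colon E|_U \to \bfk(x)[-k]$ induces this surjection on $\cH^k$, hence $\varphi \neq 0$. As $\bfk(x)$ is supported at $x \in U$, extension by zero identifies $\Hom_X(E, \bfk(x)[-k])$ with $\Hom_U(E|_U, \bfk(x)[-k])$, so the first statement holds with $i = -k$.

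For the second statement, assume in addition that $X$ is smooth and proper of dimension $n$. Serre duality, combined with the local triviality of $\omega_X$ at $x$ (which yields $\bfk(x) \otimes \omega_X \simeq \bfk(x)$), gives
\[
\Hom(\bfk(x)[j], E) \simeq \Hom(E, \bfk(x)[j+n])^{*},
\]
and setting $j := i - n$ with $i$ from the first part finishes the argument.

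The main obstacle to watch for is the stack-theoretic setup: non-stackiness of $x$ is essential because it guarantees that $\cO_{X,x}$ is a regular local $\bfk$-algebra with residue field $\bfk$, enabling Nakayama to produce a surjection onto $\bfk(x)$; at a genuinely stacky point, $\bfk(x)$ is only one of several simple modules over the local stabilizer and the naive argument breaks. The other subtlety—identifying the global Hom into a skyscraper with its local counterpart—is routine but also deserves mention.
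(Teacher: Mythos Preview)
Your argument is correct and follows essentially the same route as the paper's: both choose the maximal degree $i$ with $x \in \Supp \cH^i(E)$, build a nonzero morphism $E \to \bfk(x)[-i]$ via truncation followed by a surjection onto the residue field (which exists precisely because $x$ is non-stacky), and then derive the second assertion from the first by Serre duality together with $\bfk(x)\otimes\omega_X\simeq\bfk(x)$. The only cosmetic difference is that you restrict to an open neighborhood $U$ before truncating, whereas the paper writes the truncation globally and checks nontriviality locally at $x$.
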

\begin{proof}
Since the support of an object in invariant under a tensor product with
an invertible sheaf, one can reduce the second assertion
to the first one by applying the Serre functor to the object $E$.

Suppose that the closed point $x$ is contained in $\Supp E$. 
Set
$
 m = \max \lc j \mid x \in \Supp H ^{ j } ( E )  \rc.
$
We have a distinguished triangle 
$E^{\leq m} \to E \to E^{>m} \to E^{\leq m[1]}$, 
where $E^{>m}$ (resp. $E^{\leq m}$) is the upper (resp. lower) truncation of $E$.
Since
$
 x \nin \Supp \lb E^{>m} \rb
$, we have $\Hom(E^{>m}, \bfk(x)[p] ) =0$ for all $p \in \bZ$. 
Hence we see $\Hom(E, \bfk(x)[-m] ) \simeq \Hom(E ^{\leq m}, \bfk(x)[-m])$. 
Moreover we have 
\begin{equation}\label{eq:support_closed_point}
\Hom(E ^{\leq m}, \bfk(x)[-m]) \simeq \Hom (H^m(E)[-m], \bfk(x)[-m]),
\end{equation}
since
$
 \Hom \lb (E^{\leq m})^{\leq m-1}, \bfk(x)[-m] \rb
 =
 \Hom \lb E^{\leq m - 1}, \bfk(x)[-m] \rb
 =
 0
$
by the degree reason. Since the right hand side of \eqref{eq:support_closed_point} is non-zero, we conclude the proof.
\end{proof}

For the sake of completeness, here we include the following lemma.

\begin{lemma}[Nakayama-Azumaya-Krull]\label{lm:Derived_NAK}
Let $X$ be a scheme,
$
 \iota \colon x \hto X
$
a closed point, and
$
 E \in \bfD ^{ - } ( \coh X )
$
a complex of coherent sheaves on $X$ bounded above. If
$
 \bL \iota^*E = 0
$, then
$
 x \nin \Supp{E}
$.
\end{lemma}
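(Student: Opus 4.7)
The plan is to reduce to a purely local statement about complexes of modules over the local ring $\cO_{X,x}$, then argue by looking at the top non-vanishing cohomology sheaf and invoking the classical Nakayama--Azumaya--Krull lemma for a finitely generated module.

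First I localize at $x$. Write $R := \cO_{X,x}$, let $\mathfrak m \subset R$ be its maximal ideal, set $k := R/\mathfrak m = \bfk(x)$, and put $M^\bullet := E_x$, a bounded-above complex of $R$-modules. Because $E \in D^{-}(\coh X)$ and $X$ is locally Noetherian, each cohomology module $\cH^i(M^\bullet)$ is finitely generated over $R$. The hypothesis $\bL\iota^* E = 0$ becomes $M^\bullet \otimes^{\bL}_R k \simeq 0$ in $D(k)$, while the conclusion $x \notin \Supp E$ is equivalent to $\cH^i(M^\bullet) = 0$ for every $i \in \bZ$.

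Suppose for contradiction that $\{ i \in \bZ : \cH^i(M^\bullet) \neq 0 \}$ is nonempty, and let $i_0$ be its maximum (finite by boundedness above). Consider the truncation distinguished triangle
$$\tau^{\leq i_0 - 1} M^\bullet \longrightarrow M^\bullet \longrightarrow \cH^{i_0}(M^\bullet)[-i_0],$$
apply $-\otimes^{\bL}_R k$, and examine the long exact sequence of cohomology at position $i_0$. The key t-exactness observation is that $-\otimes^{\bL}_R k$ carries $D^{\leq n}$ into $D^{\leq n}$: if $N$ is an $R$-module placed in cohomological degree $n$, then $(N[-n])\otimes^{\bL}_R k$ has cohomology $\mathrm{Tor}_j^R(N,k)$ in degree $n-j$ for $j \geq 0$, and hence sits in degrees $\leq n$. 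Applied to $\tau^{\leq i_0 - 1} M^\bullet$, which lies in $D^{\leq i_0 - 1}$, this forces the two outer terms of the long exact sequence at position $i_0$ to vanish, and I obtain
$$\cH^{i_0}(M^\bullet) \otimes_R k \;\simeq\; \cH^{i_0}(M^\bullet \otimes^{\bL}_R k) \;=\; 0.$$
Since $\cH^{i_0}(M^\bullet)$ is finitely generated over $R$, the classical Nakayama lemma then forces $\cH^{i_0}(M^\bullet) = 0$, contradicting the choice of $i_0$.

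The only nontrivial ingredient is the right t-exactness estimate for $-\otimes^{\bL}_R k$; once that is in hand, the remainder is a routine combination of truncation and ordinary Nakayama. The finite-generation hypothesis built into $E \in D^{-}(\coh X)$ is essential: without it the classical Nakayama lemma would not be available in the final step, and the conclusion would genuinely fail (for instance, the field of fractions of $R$ shows that $k\otimes^{\bL}_R (-)$ is not conservative on arbitrary modules).
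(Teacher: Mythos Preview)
Your proof is correct and is precisely the standard argument: localize at the point, look at the top nonzero cohomology, use right $t$-exactness of the derived tensor product to identify the top cohomology of $M^\bullet \otimes^{\bL}_R k$ with $\cH^{i_0}(M^\bullet)\otimes_R k$, and apply the classical Nakayama lemma. The paper does not write out a proof but simply refers the reader to \cite[Lemma~3.29 and Exercise~3.30]{MR2244106}; your argument is exactly the content of that reference, so nothing further is needed.
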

\begin{proof}
See \cite[Lemma 3.29 and Exercise 3.30]{MR2244106}.
\end{proof}

%
%

\subsection{(Semi)orthogonal decomposition}

We recall the notion of (semi)orthogonal decompositions and show
the non-existence of orthogonal decompositions for stacks which admits
a non-stacky closed point. This is well known for varieties
(see, e.g., \cite[Proposition 3.10]{MR2244106}).
The proof given below was suggested by Yujiro Kawamata.
Recall that a full subcategory
$
 \cA \subset \cC
$
is said to be \emph{strict}
if any object in
$
 \cC
$
which is isomorphic to an object in
$
 \cA
$
is already contained in
$
 \cA
$.

\begin{definition}\label{df:SOD}
A pair of strictly full triangulated subcategories $\cA,\cB$ of a triangulated category
$\cT$ is a \emph{semiorthogonal decomposition} if the following conditions are satisfied:
\begin{itemize}
\item $\Hom_{\cT}(b,a)=0$ for all $a\in\cA$ and $b\in\cB$.
\item Any object $x\in\cT$ is decomposed into a pair of objects
$a\in\cA$ and $b\in\cB$ by a distinguished triangle
\begin{equation}\label{eq:decomposing_triangle}
 b \to x \to a \to b [ 1 ].
\end{equation}
\end{itemize}
This situation will be denoted by the symbol
$
 \cT = \la \cA, \cB \ra
$.
If $\Hom_{\cT}(\cA, \cB)=0$ also holds, the decomposition
is called an \emph{orthogonal decomposition} (OD for short) and denoted by
$
 \cT = \cA \oplus \cB
$.
In this case the triangle \eqref{eq:decomposing_triangle}
splits and we obtain the direct sum decomposition
$
 x \simeq a \oplus b
$.
\end{definition}

\begin{remark}
\begin{enumerate}
\item If
$
 \cT = \la \cA,\cB \ra
$
is an SOD of $\cT$, then
$
 \cA = \cB^{\perp}
$
and
$
 \cB ={}^{\perp} \hspace{-1mm}\cA
$.
Here
$
 \bullet^\perp
$
(resp.
$
 {}^\perp \bullet
$)
denotes the right (resp. left) orthogonal complement of the subcategory
$\bullet$ (\cite[Definition 1.42]{MR2244106}).

\item For a strictly full triangulated subcategory
$
 \iota \colon \cC \subset \cT
$,
the pair
$
 \cC, {}^{\perp} \cC
$
(resp. $\cC^{\perp}, \cC$)
gives an SOD of $\cT$
if and only if the inclusion functor $\iota$ admits a left (resp. right) adjoint (see \cite[Lemma 3.1]{Bondal_RAACS}).
\end{enumerate}
\end{remark}

\begin{lemma}\label{lm:no_OD}
Let $X$ be a connected locally separated DM stack which admits a non-stacky point.
Assume that $X$ satisfies the resolution property.
Then
$
 \bfD ^{ - } \lb \coh X \rb,
 \bfD ^{ b } \lb \coh X \rb
$, and
$
 \bfD ^{ \perf } \lb X \rb
$
admit no non-trivial OD.
\end{lemma}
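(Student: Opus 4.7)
The plan is to show that in any OD $D(X) = \cA \oplus \cB$ the subcategory $\cA$ already contains every locally free coherent sheaf; combined with the resolution property and smoothness, this will force $\cA = D(X)$ and hence $\cB = 0$. First I would decompose $\cO_X = a \oplus b$ with $a \in \cA$, $b \in \cB$. The endomorphism ring $\End_{D(X)}(\cO_X) = \Gamma(X, \cO_X)$ has no non-trivial idempotents because $X$ is connected: any such idempotent would cut $X$ into a disjoint union of open substacks. Hence one of the summands vanishes, and after swapping the roles of $\cA$ and $\cB$ if necessary I may assume $\cO_X \in \cA$. Next, for any non-stacky closed point $y \in X$ the skyscraper $\bfk(y)$ satisfies $\End_{D(X)}(\bfk(y)) = \bfk$, hence is indecomposable and lies either in $\cA$ or in $\cB$. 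If it were in $\cB$, orthogonality would yield $\Hom(\cO_X, \bfk(y)) = 0$, contradicting the equality $\Hom(\cO_X, \bfk(y)) = \bfk$; thus $\bfk(y) \in \cA$ for every non-stacky closed point $y$.

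Now let $F$ be a locally free coherent sheaf and write $F = a_F \oplus b_F$ in the OD. Since $F$ is concentrated in degree zero, so are $a_F$ and $b_F$, and being a direct summand of a locally free sheaf, $b_F$ is itself locally free. Orthogonality with $\bfk(y) \in \cA$ forces $\Hom(b_F, \bfk(y)) = 0$ for every non-stacky $y$, and this Hom group computes the dual of the fibre $b_F \otimes \bfk(y)$, and hence the rank of $b_F$ at $y$. So $b_F$ has rank zero at every non-stacky point. Smoothness and connectedness of $X$ force $X$ to be irreducible, and the existence of even one non-stacky closed point then makes the non-stacky locus a non-empty, and hence dense, open substack. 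Since a locally free sheaf on the connected stack $X$ has constant rank, we conclude $b_F = 0$ and therefore $F \in \cA$.

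To conclude, the resolution property together with the smoothness of $X$ imply that every object of $D^b(\coh X)$ is quasi-isomorphic to a bounded complex of locally free sheaves, which belongs to the triangulated subcategory $\cA$; therefore $\cA = D(X)$ and the OD is trivial. The main obstacle I anticipate is the rank argument in the second paragraph: one must propagate the vanishing of $b_F$ from the non-stacky locus to all of $X$. This is precisely where the irreducibility coming from smoothness plus connectedness, combined with the a priori existence of at least one non-stacky closed point, become essential; the local separatedness hypothesis enters implicitly to ensure that closed points embed as honest closed substacks so that the skyscrapers $\bfk(y)$ behave as expected.
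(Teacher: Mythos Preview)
Your proof is correct and follows essentially the same strategy as the paper: place a non-stacky skyscraper in one summand (the paper does this directly; you detour through $\cO_X$ first and then deduce it for all non-stacky skyscrapers), decompose any locally free sheaf and show its $\cB$-component has rank zero at such a point and hence vanishes, then conclude via the resolution property. Your density argument for the non-stacky locus is unnecessary, since the rank of a locally free sheaf on a connected stack is already constant and a single non-stacky point suffices, but this does not affect correctness.
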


In the statement above,
$
 \bfD ^{ - } \lb \coh X \rb
$
and
$
 \bfD ^{ \perf } \lb X \rb
$
denote the category of upper bounded complexes of coherent sheaves and those of perfect complexes, respectively.

\begin{proof}
Let
$
 \bfD ^{ b } \lb \coh X \rb = \cA \oplus \cB
$
be an OD of $\bfD ^{ b } \lb \coh X \rb$.
Take a non-stacky point $x\in X$.
Since $\End(\bfk ( x ))$ is a field,
$\bfk ( x )$ is indecomposable and hence is contained in either $\cA$ or $\cB$.
Let us assume it is contained in $\cA$.

Let $E$ be any locally free sheaf, and consider the decomposition
$
 E = E_{\cA} \oplus E_{\cB}
$
provided by the OD. If
$E_{\cB} \neq 0$, then
$
 E_{\cB} \otimes \bfk ( x )
$
is isomorphic to
$
 \bfk ( x )^{\oplus r}
$ with
$
 r
$ the rank of $E_{\cB}$.
This follows from the connectedness of $X$
and the fact that the closed substack $x$ is a scheme \cite[Chapter II, Proposition 5.9]{MR0302647}.
Thus we get a surjective morphism
$
 E_{\cB}\to E _{ \cB } \otimes \bfk ( x )
 \to \bfk ( x )
$ 
and it is a contradiction. Hence $E$ belongs to $\cA$.
Since locally free sheaves form
a spanning class of $\bfD ^{ b } \lb \coh X \rb$ because of the resolution property. To see this, take any non-trivial object
$
 F ^{ \bullet } = \lb F ^{ i } \rb _{ i \in \bZ }
$
and set
$
 M \coloneqq \max \lc i \in \bZ \mid \cH ^{ i } \lb F ^{ \bullet } \rb \neq 0 \rc
$.
By replacing $F ^{ \bullet }$ with its canonical truncation at $M$, we may assume that
$
 F ^{ i } \neq 0
$
for
$
 i > M
$.
Then cover $F ^{ M }$ by a locally free sheaf $E ^{ M }$ and extend it to a  locally free sheaf resolution
$
 E ^{ \bullet } = \lb E ^{ i } \rb _{ i \in \bZ }
$
of
$
 F ^{ \bullet }
$.
Then one obtains a non-trivial map
$
 E ^{ M } [ - M ] \to E ^{ \bullet }
$.
Therefore $\cB$ should be trivial by the orthogonality.

Finally, note that the arguments above works perfectly well for
$
 \bfD ^{ - }\lb \coh X \rb
$
as well. This, in turn, implies the claim for
$
 \bfD ^{ \perf }\lb  X \rb
$.
In fact, suppose that there is an OD
$
 \bfD ^{ \perf }\lb  X \rb = \cA \oplus \cB
$.
By \cite[Proposition 4.3]{MR2801403}, there is a unique SOD
$
 \bfD ^{ - } \lb \coh X \rb= \la \cA ^{ - }, \cB ^{ - } \ra
$
which is compatible with the SOD
$
 \bfD ^{ \perf } \lb  X \rb= \la \cA, \cB \ra
$.
Closely looking on the explicit description of the subcategory
$
 \cA ^{ - }
$
(respectively
$
 \cB ^{ - }
$)
(see the proof of \cite[Proposition 4.3]{MR2801403}),
one finds that these categories depend only on the subcategory $\cA$
(resp. $\cB$).
Hence it follows that
$
 \bfD ^{ - }\lb \coh X \rb = \la \cB ^{ - }, \cA ^{ - } \ra
$
is also an SOD which is compatible with the SOD
$
 \bfD ^{ \perf } \lb  X \rb= \la \cB, \cA \ra
$.
Thus we see
$
 \bfD ^{ - } \lb \coh X \rb= \cA ^{ - } \oplus \cB ^{ - }
$,
so that either
$
 \cA ^{ - } = 0
$
or
$
 \cB ^{ - } = 0
$.
Since
$
 \cA ^{ \perf }
$
(resp.
$
 \cB ^{ \perf }
$)
is a full subcategory of
$
 \cA ^{ - }
$
(resp.
$
 \cB ^{ - }
$), we conclude the proof.
\end{proof}

\begin{remark}
If there is no non-stacky point on $X$, $\bfD ^{ b } \lb \coh X \rb$ may have an OD even if
it is smooth and irreducible.
For example consider the quotient stack
$
 X = [ \Spec{ \bC } / ( \bZ / 2 ) ]
$.
A coherent sheaf on $X$ is nothing but a finite dimensional representation of
$\bZ / 2$ over
$
 \bC
$. The derived category $\bfD ^{ b } \lb \coh X \rb$ is orthogonally
decomposed by the trivial representation and the non-trivial character of $\bZ / 2$.
\end{remark}

\pref{lm:no_OD} provides us with the following useful criterion for the triviality of an SOD.
\begin{corollary}\label{cr:iff}
Let $X$ be a smooth proper DM stack which has a non-stacky point and satisfies
the resolution property. Consider an SOD
$
 \bfD ^{ b } \lb \coh X \rb = \la \cA,\cB\ra.
$
Then it is trivial, i.e. $\cA=0$ or $\cB=0$,
if and only if
$
 \cA \otimes \omega_X \subset \cA \subset \bfD ^{ b } \lb \coh X \rb
$
holds.
\end{corollary}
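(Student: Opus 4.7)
The ``only if'' direction is immediate: if $\cB = 0$ then $\cA = D(X)$ and the containment is obvious, while if $\cA = 0$ it is vacuous.

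For the ``if'' direction, my plan is to upgrade the semiorthogonal decomposition to an honest orthogonal decomposition and then invoke \pref{lm:no_OD}. Concretely, I would fix $a \in \cA$ and $b \in \cB$ and study $\Hom_{D(X)}(a, b)$. Since $X$ is smooth and proper of some dimension $n$, the complex $\omega_X[n]$ is a dualizing complex by the cited result of Nironi, so Serre duality yields an isomorphism
\begin{equation}
\Hom_{D(X)}(a, b) \simeq \Hom_{D(X)}\bigl(b,\, a \otimes \omega_X[n]\bigr)^{\vee}.
\end{equation}
The hypothesis $\cA \otimes \omega_X \subset \cA$ ensures that $a \otimes \omega_X[n]$ still lies in $\cA$, so the right-hand side is a $\Hom$ from an object of $\cB$ to an object of $\cA$, hence vanishes by the defining property of the SOD $D(X) = \la \cA, \cB \ra$. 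Thus $\Hom_{D(X)}(\cA, \cB) = 0$ as well, which together with the SOD vanishing $\Hom_{D(X)}(\cB, \cA) = 0$ shows that the decomposition is actually orthogonal: $D(X) = \cA \oplus \cB$.

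At this point the hypotheses on $X$ (existence of a non-stacky point, resolution property, and smooth and proper, hence locally separated and connected by our conventions) are exactly those of \pref{lm:no_OD}, which forces $\cA = 0$ or $\cB = 0$, as desired.

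The only place where anything could go wrong is making sure the Serre duality step is applicable in the DM-stack setting and that $a \otimes \omega_X[n] \in \cA$; the former is handled by the dualizing-complex fact quoted above and the latter follows because $\cA$ is a triangulated subcategory (closed under shifts) and the tensor hypothesis is given. So the argument is short and the main conceptual point is just the Serre-duality conversion of the containment $\cA \otimes \omega_X \subset \cA$ into orthogonality of the decomposition.
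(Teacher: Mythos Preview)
Your proof is correct and follows essentially the same route as the paper's: both arguments use Serre duality together with the hypothesis $\cA\otimes\omega_X\subset\cA$ to show $\Hom(\cA,\cB)=0$, upgrade the SOD to an OD, and then invoke \pref{lm:no_OD}.
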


\noindent
The following argument is well known to experts (see \cite[Proposition 3.6]{Bondal-Kapranov_Serre}),
but we include it here because of its importance in this paper.

\begin{proof}
By \pref{lm:no_OD}, it is enough to show that it is an OD. 
Given $a\in\cA$ and $b\in\cB$, by applying the Serre duality and the assumption
$
 a \otimes \omega_X [ \dim X ] \in \cA
$, we see
$
 \Hom ( a, b ) \simeq \Hom ( b, a \otimes \omega_X [ \dim X ] )^{\vee} = 0.
$
Hence we see that $\cA$ is also the right orthogonal of $\cB$, concluding the proof.
\end{proof}

%
%

\subsection{Picard scheme}

We recall basics of Picard schemes from
\cite[Chapter 9]{MR2222646}.
Let $X\to S$ be a morphism of finite type between locally Noetherian schemes.
The relative Picard functor,
which will be denoted by $\Pic_{X/S}$,
is a contravariant functor from the category of locally Noetherian $S$-schemes
to the category of abelian groups defined by
\begin{equation}
 \Pic_{ X / S } ( T ) = \Pic ( X \times _{ S } T ) / \Pic ( T ),
\end{equation}
where $T$ is a locally Noetherian $S$-scheme.
The functor
$
 \Pic _{ X / S }
$
is a presheaf, and  the associated sheaf on the fppf site will be denoted by
$
 \Pic _{ ( X / S ) ( \mathrm{ fppf } )}
$.
If
$
 \Pic _{ ( X / S )( \mathrm{fppf} ) }
$
is represented by a scheme, it will be denoted by
$
 \PPic _{ X / S }
$.
A line bundle $L$ on $X$ naturally defines an $S$-valued point
$
 [ L ] \in \PPic _{ X / S } ( S ).
$

The following existence result for Picard schemes is enough for us.

\begin{theorem}[{$=$\cite[Corollary 9.4.18.3]{MR2222646}}]\label{th:existence}
Let $S$ be the spectrum of a field and $X$ a proper scheme over $S$.
Then
$
 \PPic _{ X / S }
$
exists and is a disjoint union of open quasi-projective subschemes.
\end{theorem}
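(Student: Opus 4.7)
The plan is to follow the classical strategy due to Grothendieck and Murre, passing through the projective case and then descending along a Chow cover, and finally to check quasi-projectivity of components.

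First I would reduce to the case where $X$ is reduced and connected. Given the decomposition of $X$ into its reduced irreducible components, one gets a short exact sequence of fppf sheaves relating $\Pic_{X/k}$ with the Picards of the components and of the nilpotent/intersection data; the latter are representable (the nilpotent part contributes an affine group via $\exp$ from the coherent sheaf of nilpotents, and intersection contributions are finite), so representability for $X$ reduces to representability for each reduced irreducible component. So we may and will assume $X$ is integral.

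Second, for integral projective $X$ over $k$ I would invoke Grothendieck's representability theorem (Theorem 9.4.8 of the reference), whose construction goes via the Hilbert scheme: choose a very ample $\mathcal{O}_X(1)$; for a family of line bundles $L$ of sufficiently positive degree, $L(n)$ is generated by global sections with vanishing higher cohomology, so a section cuts out an effective divisor giving a point of $\mathrm{Hilb}_{X/k}$, and Picard is realized as the quotient of an open subscheme of $\mathrm{Hilb}$ by the projectivization of the universal sections. Existence of the quotient uses the flat descent machinery of FGA and the fact that $H^0(X,\mathcal{O}_X)$ has constant rank after pulling back to the spectrum of the field, allowing rigidification.

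Third, for general proper integral $X$ I would apply Chow's lemma to obtain a projective birational morphism $\pi\colon X'\to X$ with $X'$ integral. Pullback induces a homomorphism $\pi^{*}\colon \Pic_{X/k}\to \Pic_{X'/k}$ of fppf sheaves; its kernel and cokernel are controlled by the exceptional locus of $\pi$ (more precisely, by the higher direct images $R^{i}\pi_{*}\mathbb{G}_{m}$, which are concentrated on a closed subscheme of lower dimension). A Leray/Mayer--Vietoris argument together with noetherian induction on $\dim X$ reduces representability of $\Pic_{X/k}$ to that of $\Pic_{X'/k}$ (already known) and to Picards of lower-dimensional proper $k$-schemes. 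This is the step I expect to be the main obstacle: making the comparison with $\pi^{*}$ precise enough to conclude representability by a scheme rather than merely by an algebraic space requires a careful analysis of the fibers of $\pi$ and their formal neighborhoods.

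Finally, for quasi-projectivity of components I would argue as follows. By the theorem of the base (Néron--Severi for proper schemes over a field), the component group $\pi_{0}(\PPic_{X/k})$ is a finitely generated abelian group, and each connected component is a torsor under the identity component $\PPic^{0}_{X/k}$; thus it suffices to show that $\PPic^{0}_{X/k}$ is quasi-projective and of finite type. Finite type follows from boundedness of line bundles with bounded numerical class (Matsusaka). For quasi-projectivity, pull back along the normalization $\tilde{X}\to X$ and its desingularization (if $\mathrm{char}\, k=0$) or alteration (in general) to reduce to the smooth projective case, where $\PPic^{0}$ is an abelian variety and hence projective; one then checks that the natural morphism out of $\PPic^{0}_{X/k}$ has affine kernel, realizing $\PPic^{0}_{X/k}$ as an extension of an abelian variety by an affine group scheme (the Chevalley decomposition for commutative group schemes over $k$), which is quasi-projective.
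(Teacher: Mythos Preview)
The paper does not prove this statement at all; it is quoted as \cite[Corollary~9.4.18.3]{MR2222646} (the FGA~Explained volume) and used as a black box in the section on Picard schemes. There is therefore no proof in the paper to compare your attempt against.

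Regarding your outline on its own merits: the overall architecture (projective case via the Hilbert scheme, then Chow's lemma plus d\'evissage for the proper case) is indeed the strategy carried out in the cited reference, so you are on the right track. A few points deserve sharpening. Your Step~1 reduction is more delicate than you suggest: the passage from $X$ to $X_{\mathrm{red}}$ and then to irreducible components does not fit into a single short exact sequence of fppf sheaves, and the ``intersection contributions'' are not obviously finite or representable without further argument. In Step~3 you correctly flag the main obstacle; the actual proof (Murre, using Oort's d\'evissage) does not proceed by directly comparing kernel and cokernel of $\pi^{*}$ along a Chow cover, but rather by establishing pro-representability and then checking effectivity, and the passage from algebraic space to scheme is genuinely nontrivial. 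Finally, in Step~4 you are working harder than necessary: once $\PPic_{X/k}$ is known to exist as a group scheme locally of finite type over a field, every connected component is a torsor under $\PPic^{0}_{X/k}$, and any group scheme of finite type over a field is automatically quasi-projective (this goes back to Chow and is treated in SGA~3); invoking alterations and the Chevalley decomposition is not needed, and in any case an extension of an abelian variety by an affine group is not \emph{obviously} quasi-projective without further input.
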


If $\PPic_{X/S}$ exists and $S$ is the spectrum of a field,
its identity component
(i.e., the connected component containing the identity)
will be denote by $\PPic^0_{X/S}$.
The following theorem characterizes the $\bfk$-valued points of $\PPic^0_{X/\bfk}$

\begin{theorem}[{$=$\cite[Corollary 9.5.10]{MR2222646}}]\label{th:characterization_of_points_of_Pic0}
Assume that $S$ is the spectrum of a field and
$
 \PPic _{ X / S }
$
exists. Let $L$ be an invertible sheaf on X.
Then $L$ is algebraically equivalent to $\cO_X$
if and only if
$
 [ L ] \in \PPic^0 _{ X / S } ( S ).
$
\end{theorem}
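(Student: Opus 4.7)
The plan is to prove the two implications separately, viewing both sides as geometric statements about connectedness of families of line bundles. The forward direction uses the universal property of the Picard functor applied to a family witnessing algebraic equivalence, while the reverse direction uses the fact that $\PPic^0_{X/\bfk}$ is a connected finite-type $\bfk$-scheme (by \pref{th:existence}) so any two of its $\bfk$-points can be joined by an irreducible curve.

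For the forward direction, suppose $L$ is algebraically equivalent to $\cO_X$. By definition this means there exists a connected locally Noetherian $\bfk$-scheme $T$, two $\bfk$-points $t_0,t_1\in T(\bfk)$, and a line bundle $\cL$ on $X\times_{\bfk} T$ whose restrictions to the fibers over $t_0$ and $t_1$ are isomorphic to $\cO_X$ and $L$ respectively. The line bundle $\cL$ determines a section $[\cL]\in \Pic_{X/\bfk}(T)$, hence (after fppf-sheafification) a morphism of sheaves $\phi\colon T \to \Pic_{(X/\bfk)(\mathrm{fppf})}$; by representability this is a morphism of schemes $T \to \PPic_{X/\bfk}$. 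The points $\phi(t_0)=[\cO_X]$ and $\phi(t_1)=[L]$ lie in the image of a connected scheme, so they belong to the same connected component of $\PPic_{X/\bfk}$, which must be $\PPic^0_{X/\bfk}$ since it contains the identity.

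For the converse, assume $[L]\in\PPic^0_{X/\bfk}(\bfk)$. By \pref{th:existence}, $\PPic^0_{X/\bfk}$ is a quasi-projective connected $\bfk$-scheme of finite type, so one can find an irreducible curve $C\subset \PPic^0_{X/\bfk}$ passing through both $[\cO_X]$ and $[L]$. The point $[L]$ gives an element of $\Pic_{(X/\bfk)(\mathrm{fppf})}(\bfk)$, which by fppf-descent lifts to an actual line bundle on some fppf cover; more usefully, along $C$ there exists an étale cover $\tilde{C}\to C$ on which the tautological class lifts to a genuine line bundle $\cL$ on $X\times_{\bfk}\tilde{C}$. The preimages of $[\cO_X]$ and $[L]$ in $\tilde{C}$ are nonempty, and after choosing such points $\tilde{c}_0,\tilde{c}_1$ and twisting $\cL$ by a pullback from $\tilde{C}$ to normalize the fiber over $\tilde{c}_0$ to be $\cO_X$, we obtain, on the connected scheme $\tilde{C}$, a family realizing an algebraic equivalence between $\cO_X$ and $L$.

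The main obstacle is the familiar subtlety that $\PPic_{X/\bfk}$ generally does not carry a universal (Poincaré) family: the functor $\Pic_{X/\bfk}$ is only a presheaf and its fppf-sheafification has been used to define $\PPic_{X/\bfk}$. Thus in the converse direction one cannot simply pull back a universal family along $C\hookrightarrow\PPic^0_{X/\bfk}$; one must either pass to an étale or fppf cover as sketched above, or instead appeal to the rigidification trick (imposing a trivialization along a section of $X\to\Spec\bfk$, if one exists, or fixing a point on $X$ after base change to $\overline{\bfk}$) to replace $\PPic^0$ by a fine moduli scheme and then produce the connecting family directly. Verifying that the resulting family descends or that algebraic equivalence is preserved under such base changes is the only nontrivial bookkeeping; everything else is formal from the defining universal property of $\PPic_{X/\bfk}$ and the connectedness of $\PPic^0_{X/\bfk}$.
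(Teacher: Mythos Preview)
The paper does not give its own proof of this statement: it is quoted verbatim as \cite[Corollary 9.5.10]{MR2222646} and used as a black box, with no argument supplied. So there is nothing in the paper to compare your proposal against.

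As for the proposal itself, the overall strategy is the standard one and is sound, but two points deserve tightening. First, in the forward direction you assume a single connecting family $(T,\cL)$, whereas \pref{df:algebraic_equivalence} allows a finite chain $T_1,\dots,T_n$; the fix is trivial (apply your connectedness argument to each link and conclude by transitivity), but as written your opening sentence misstates the definition. Second, in the converse direction your appeal to an \'etale cover of $C$ on which the tautological class lifts to an honest line bundle is not quite justified: a priori the representing sheaf is only an fppf sheaf, so you only get an fppf cover, and you then need to argue that a connected component of that cover still hits both preimages. Under the paper's standing hypothesis $\bfk=\overline{\bfk}$ there is a cleaner route that avoids this bookkeeping entirely: any $\bfk$-rational point of $X$ gives a section of $X\to\Spec\bfk$, and rigidifying along it makes the relative Picard functor itself representable, so a genuine Poincar\'e bundle exists on $X\times_{\bfk}\PPic^0_{X/\bfk}$ and can be restricted to $C$ directly. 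With these two adjustments the argument is complete.
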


The notion of algebraic equivalence is defined as follows.
For simplicity, we assume that the base field is algebraically closed.

\begin{definition}[{$=$\cite[Definition 9.5.9]{MR2222646}}]\label{df:algebraic_equivalence}
Assume $S$ is the spectrum of an algebraically closed field $\bfk$.
Let $L$ and $N$ be invertible sheaves on $X$.
Then $L$ is said to be \emph{algebraically equivalent to} $N$ if, for
some $n$ and all $i$ with $1\le i\le n$, there exist a connected $\bfk$-schemes of finite type
$T_i$, closed points
$
 s_i, t_i \in T_i
$,
and an invertible sheaf $M_i$ on
$
 X \times _{ \bfk } T_i
$
such that
\begin{equation}
 L
 \simeq M _{ 1, s_1 },
 M _{ 1, t_1 }
 \simeq M _{ 2, s_2 },
 \dots,
 M _{ n-1, t _{n-1} }
 \simeq M_{ n, s_n },
 M_{n, t_n}
 \simeq N.
\end{equation}
\end{definition}

%
%

\section{Results in arbitrary dimensions}
\subsection{Constraints on SOD by canonical base loci}

\begin{theorem}\label{th:pg positive}
Let $X$ be a smooth proper DM stack and
$
\bfD ^{ b } \lb \coh X \rb = \la \cA, \cB \ra
$
an SOD. Then

\begin{enumerate}
	\item\label{it:closed points}
	At least one of the followings holds.
	\begin{enumerate}
	\item \label{it:closed points in A}
	$
	\bfk ( x ) \in \cA
	$
	for any closed point
	$
	x \nin \Bs| \omega_X | \cup \cS
	$.
	\item \label{it:closed points in B}
	$
	\bfk (x) \in \cB
	$
	for any closed point
	$
	x \nin \Bs| \omega_X | \cup \cS
	$.
	\end{enumerate}

	\item \label{it:small supports}
	When (\pref{it:closed points in A}) (resp. (\pref{it:closed points in B}))
	is satisfied, the support of any object in $ \cB $ (resp. $ \cA $)
	is contained in
	$ \Bs| \omega_X | \cup \cS $.
	
\end{enumerate}
\end{theorem}

\noindent
In the statement above, $\Bs| \omega_X |$ denotes the canonical base locus (see \pref{df:base_locus}) and
$
\cS \subset X
$
the locus of stacky points (see \pref{df:stacky_locus}).

\begin{proof}
Take an arbitrary closed point
$
 x \in X \setminus \lb \Bs | \omega _X | \cup \cS \rb
$.
We first show that
$
 \bfk ( x )
$
is contained in either $\cA$ or $\cB$. 
Let us include $\bfk ( x )$ in the triangle provided by the SOD:
\begin{equation}
 b \to \bfk ( x ) \to a \xto[]{f}  b [ 1 ]. 
\end{equation}

\noindent
Take a global section
$
 s \in H^0 ( X, \omega _X )
$
which is not vanishing at $x$, and set
$
 U = X \setminus Z ( s )
$.
If
$
 f| _U \neq 0
$, we see
$
 ( \otimes s \circ f ) | _U
 = \otimes s| _U \circ f|_ U
$
is also non-trivial. This contradicts
\begin{equation}
 \Hom ( a, b \otimes \omega _X [ 1 ] )
 \simeq \Hom ( b, a [ \dim X - 1 ] ) ^{\vee}
 = 0.
\end{equation}
Thus we see
$
 f| _U = 0
$.
This implies the decomposition
$
 \bfk ( x )
 \simeq a| _U \oplus b| _U
$
and hence we obtain either
$
 a| _U = 0
$
or
$
 b| _U = 0
$.
If
$
 a| _U = 0
$,
the morphism
$
 \bfk ( x ) \to a
$
is zero. Then we obtain the decomposition
$
 b \simeq \bfk ( x ) \oplus a [ - 1 ]
$.
By the semiorthogonality we see
$
 a = 0
$
and hence
$
 \bfk ( x ) \in \cB.
$
If we instead assume
$
 b | _{ U } = 0,
$
similarly we obtain
$
 \bfk ( x ) \in \cA.
$

If $\bfk ( x ) \in \cB$ (respectively $\bfk ( x ) \in \cA$) holds for some
closed point
$
 x \nin \cS
$,
then by \pref{lm:useful} any object $E\in\cA$
should satisfy
$
 x \nin \Supp{E}
$ (resp. any $ E \in \cB $).
This in particular implies that the closed substack
$
 \Supp{ E } \subset X
$
is strictly smaller than $X$.

Finally assume for a contradiction that $\cA$ and $\cB$
both contain non-stacky closed points. As said in the previous paragraph,
the support of any object in $\cA$ or $\cB$ is a strictly smaller closed subset of $X$. 
On the other hand, consider the decomposition of the structure sheaf
\begin{equation}
 b \to \cO_X \to a \to b [ 1 ].
\end{equation}
From this triangle we obtain the equality
$
 X = \Supp { \cO_X } = \Supp { a } \cup \Supp { b },
$
which contradicts the irreducibility of $X$.
Hence we see that all the non-stacky closed points outside of $\Bs| \omega_X |$
are contained simultaneously in $\cA$, or otherwise in $\cB$.
This concludes (\pref{it:closed points}) of \pref{th:pg positive}.
In the former case, the support of any object in
$\cB$ is contained in $\cS\cup\Bs| \omega_X |$ as we saw above, concluding
the proof of (\pref{it:small supports}).
\end{proof}

\begin{example}
Let $Y$ be a smooth projective surface such that
$
 \omega _{ Y }
$
is globally generated. Let
$
 f \colon X \to Y
$
be the blow-up of $Y$ at a closed point $y$, with the exceptional divisor
$
 E \subset X
$.
Then we obtain the SOD
$
 \bfD ^{ b } \lb \coh X \rb = \la \la \cO _E ( E ) \ra, \bL f^{*} \bfD ( Y ) \ra
$
(see \cite{Orlov_PB}).
Observe that the objects in
$
 \la \cO _E ( E ) \ra
$ are supported in
$
 E = \Bs | \omega_X |
$,
and that all the closed points
$
 x \nin \Bs | \omega_X |
$ are contained in
$
 \bL f^{*} D ( Y )
$.
\end{example}

As an immediate consequence of \pref{th:pg positive}, we obtain the following general result for the non-existence of SODs.

\begin{theorem}\label{th:locally free}
Let $X$ be a smooth proper DM stack satisfying the following properties:
\begin{enumerate}
\item there exists a non-stacky closed point.

\item $X$ satisfies the resolution property: i.e., every coherent sheaf on $X$
admits a surjective morphism from a locally free sheaf.
\label{it:resolution_property}

\item Each connected component of $\cS\cup\Bs| \omega_X |$ is contained in an open substack of $X$ on which
$\omega_X$ is trivial.
\end{enumerate}
Then $\bfD ^{ b } \lb \coh X \rb$ admits no non-trivial SOD. 
\end{theorem}

\noindent
If the coarse moduli space of $X$ is projective, then the condition \eqref{it:resolution_property} is always satisfied by \cite[Theorem 4.2]{Kawamata_EDCSSS}. Actually, it is enough to assume that the coarse moduli is a scheme with affine diagonal (\cite[Theorem 1.2]{MR2108211}).

In order to establish the correspondence between MMP and SOD for varieties with quotient singularities, one should think of the derived category of the smooth DM stack which is obtained by replacing the quotient singularity with the corresponding stacky quotient (see \cite{Kawamata_LCBMDC}). This is one of the reasons why we should think of stacks, not only schemes.

\begin{proof}
Take an SOD
$
 \bfD ^{ b } \lb \coh X \rb = \la \cA, \cB \ra
$.
Write
$
 U = X \setminus \lb \cS \cup \Bs| \omega _X |  \rb
$.
By \pref{th:pg positive}, closed points of $U$
are simultaneously contained in either $\cA$ or $\cB$.
Let us assume they are in
$
 \cB.
$

By \pref{th:pg positive}, then the support of any object
$
 a \in \cA
$
is contained in
$
 \cS \cup \Bs | \omega _X |
$.
Since
$
 \omega _X
$
is trivial on an open neighborhood of each connected component of this set, we see
$
 a \otimes \omega_X \simeq a
$
by \pref{lm:invariant}.
Therefore we can apply \pref{cr:iff} to conclude the proof.
\end{proof}

\begin{example}
Let $X$ be the surface (of general type) discussed in \cite[Proposition 3]{MR1980618}.
We can easily check that
$
 \Bs| \omega_X |
$
consists of $4$ points. Hence
\pref{cr:main_cor} tells us that the derived category $\bfD ^{ b } \lb \coh X \rb$ admits no SOD.
\end{example}

%
%

\subsection{Local situation}\label{sc:Local_situation}

We refine the arguments in the proof of \pref{th:pg positive},
so as to make it applicable to local situations.
This will be applied later to surfaces of general type.
For simplicity we restrict ourselves to varieties.

Let $X$ be a variety and $Z$ a closed subset. Consider the strict full subcategory
\begin{equation}\label{eq:category_supported_on_Z}
 \cC_Z
 = \{ E \in \bfD ^{ b } \lb \coh X \rb | \Supp{E} \subset Z \} \subset \bfD ^{ b } \lb \coh X \rb.
\end{equation}

\noindent
An immediate corollary of \pref{th:pg positive} is
\begin{lemma}\label{lm:restriction}
Let $X$ be a smooth proper variety with
$
 p_g(X) > 0.
$
Suppose that for any connected component $Z$ of $\Bs| \omega_X |$,
the category $\cC_Z$ admits no SOD. Then $\bfD ^{ b } \lb \coh X \rb$ has no SOD.
\end{lemma}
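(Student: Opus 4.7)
My plan is to argue by contradiction. Suppose $D(X) = \la \cA, \cB \ra$ is a non-trivial SOD. Since $p_g(X) > 0$, the base locus $\Bs|\omega_X|$ is a proper closed subset of $X$, so \pref{th:pg positive} applies; after interchanging $\cA$ and $\cB$ if necessary, I may assume case (\pref{it:closed points in A}) holds, so that $\bfk(x) \in \cA$ for every closed $x \notin \Bs|\omega_X|$ and $\cB \subset \cC_{\Bs|\omega_X|}$. Since $\cA$ automatically contains many point sheaves, non-triviality of the SOD is equivalent to $\cB \neq 0$, which is what I aim to contradict.

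Write $\Bs|\omega_X| = Z_1 \sqcup \cdots \sqcup Z_n$ as its decomposition into connected components. The key preliminary step is to establish an orthogonal decomposition $\cC_{\Bs|\omega_X|} = \bigoplus_{i=1}^n \cC_{Z_i}$: any $E \in \cC_{\Bs|\omega_X|}$ can be split using the open cover $V_i := X \setminus (\Bs|\omega_X| \setminus Z_i)$, which satisfies $V_i \cap V_j = X \setminus \Bs|\omega_X|$ for $i \neq j$; since $E|_{X \setminus \Bs|\omega_X|} = 0$, a Mayer--Vietoris argument yields $E \simeq \bigoplus_i j_{V_i, *} j_{V_i}^{*} E$, and orthogonality of the summands is immediate from the vanishing of $\Hom$ between objects with disjoint supports. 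Because $\cB$ is closed under direct summands, it inherits the decomposition $\cB = \bigoplus_i \cB_i$, where $\cB_i := \cB \cap \cC_{Z_i}$.

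I next verify that $(\cA \cap \cC_{Z_i}, \cB_i)$ is an SOD of $\cC_{Z_i}$ for every $i$. Given $E \in \cC_{Z_i}$, the SOD triangle $b \to E \to a$ of $D(X)$ has $\Supp a \subset \Supp E \cup \Supp b \subset \Bs|\omega_X|$, so both $a$ and $b$ lie in $\cC_{\Bs|\omega_X|}$ and decompose orthogonally as $a = \bigoplus a_j$ and $b = \bigoplus b_j$. For each $j \neq i$, the $\cC_{Z_j}$-component of the triangle reads $b_j \to 0 \to a_j$, forcing $a_j \simeq b_j[1]$; combined with $a_j \in \cA$ and $b_j[1] \in \cB$, semi-orthogonality $\Hom(\cB, \cA) = 0$ then gives $a_j = b_j = 0$. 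Hence $a, b \in \cC_{Z_i}$, confirming that the SOD restricts. By the hypothesis, each such restricted SOD of $\cC_{Z_i}$ is trivial: for every $i$, either $\cA \cap \cC_{Z_i} = 0$ or $\cB_i = 0$.

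The clean case is $\cB_i = 0$ for every $i$, in which $\cB = \bigoplus_i \cB_i = 0$ contradicts non-triviality and concludes the proof. The main obstacle will be the remaining alternative, where $\cA \cap \cC_{Z_i} = 0$ (equivalently $\cC_{Z_i} \subset \cB$) for some $i$. In that situation $\cC_{Z_i}$ becomes a direct summand of the right-admissible subcategory $\cB \subset D(X)$ and would therefore inherit right-admissibility in $D(X)$ itself; this is expected to fail for a proper closed subscheme $Z_i$ of the smooth proper variety $X$, because the natural candidate for the right adjoint (local cohomology along $Z_i$) does not preserve bounded coherent complexes. A more elementary alternative is to pair the canonical surjection $\cO_X \to \cO_{Z_i} \in \cB$ with the SOD-decomposition $b_0 \to \cO_X \to a_0$ of the structure sheaf and extract the desired contradiction from Serre duality together with the semi-orthogonality constraint.
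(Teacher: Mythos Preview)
Your argument has a genuine gap in the final case, where $\cC_{Z_i} \subset \cB$ for some $i$. You acknowledge this yourself (``the main obstacle will be the remaining alternative''), and neither of your proposed fixes is a proof: the admissibility heuristic is not substantiated, and the sketch involving $\cO_X \to \cO_{Z_i}$ is too vague to evaluate. In fact there is no direct contradiction to extract here without invoking the Serre functor. The missing observation is that $\cC_{Z_i}$ is stable under $\otimes\,\omega_X$; hence in \emph{both} alternatives ($\cB_i = 0$ or $\cB_i = \cC_{Z_i}$) one has $\cB_i \otimes \omega_X = \cB_i$, and summing over $i$ yields $\cB \otimes \omega_X = \cB$. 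Then \pref{cr:iff} forces the SOD to be trivial.

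This is precisely how the paper proceeds (with the roles of $\cA$ and $\cB$ reversed): it never tries to rule out the case $\cC_Z \subset \cA$, but simply notes that in either alternative the summand is $\omega_X$-stable and then invokes \pref{cr:iff}. Your reduction to an SOD on each $\cC_{Z_i}$ is correct and parallels the paper's (the paper uses the projection functor $D(X)\to\cA$ to produce the adjoint, while you argue directly with triangles---both are fine), but the endgame must go through the Serre-functor criterion rather than a case-by-case contradiction.
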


\begin{proof}
Take an SOD
$
 \bfD ^{ b } \lb \coh X \rb = \la \cA, \cB \ra
$.
By \pref{cr:iff}, it is enough to show
$
 \cA \otimes \omega_X
 =
 \cA
$.
Assume that the conclusion \eqref{it:closed points in B} of
\pref{th:pg positive} holds, so that $\cA$
is a triangulated subcategory of
$
 \cC _{ \Bs | \omega _X | }
$.
For any connected component
$
 Z \subset \Bs | \omega_X |
$,
set
$
 \cA_Z = \{ a \in \cA | \Supp{a} \subset Z \}
$
so as to obtain the orthogonal decomposition
$
 \cA = \bigoplus _{Z} \cA_Z
$
by the triangulated subcategories
$
 \cA_Z \subset \cC_{Z}
$.

Let $p\colon \bfD ^{ b } \lb \coh X \rb\to\cA$ be the left adjoint of the inclusion functor
$
 \cA \hto \bfD ^{ b } \lb \coh X \rb
$.
Composing $p$ with the obvious functors,
we get the left adjoint of the inclusion
$
 \cA_Z\hto \cC_Z
$
and hence obtain the SOD
$
 \cC_Z = \la \cA_Z, {}^\perp \hspace{-1mm} \cA_Z \ra.
$
Since
$
 \cC_Z
$
admits no SOD by the assumption,
$
 \cA_Z
$
is either
$
 0
$
or
$
 \cC _Z
$ itself. In any case we obtain the equality 
$
 \cA_Z\otimes \omega_X = \cA_Z \subset \bfD ^{ b } \lb \coh X \rb.
$
Summing up over $Z$, we obtain the conclusion.
\end{proof}

Now we show the local version of \pref{th:pg positive}
for $\cC_Z$.

\begin{proposition}\label{pr:closed_points_local}
Let $X$ be a smooth proper variety and $Z \subset X$ a closed subscheme.
Assume that for each $m \ge 1 $
we have a section
$
 s_m \in H^0 ( mZ, \omega _{ X } |_{ mZ } )
$
such that
$
 s _{ m + 1 } | _{ m Z } = s _{ m }
$
and
$
 s_1
$
is generically non-vanishing on an irreducible component
$
 W \subset Z
$. Write the projective limit as
$
 s = ( s_m )_{ m \ge 1 }
 \in
 \varprojlim H^0 ( mZ, \omega _X | _{ m Z } ).
$

Then for any SOD $\cC_Z=\la \cA, \cB \ra$,
one and only one of the followings holds.
\begin{enumerate}[(a)]
\item For any closed point $x\in W$ at which $s$ does not vanish, $\bfk ( x )\in\cA$.
\item For any closed point $x\in W$ at which $s$ does not vanish, $\bfk ( x )\in\cB$.
\end{enumerate}
\end{proposition}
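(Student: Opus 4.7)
The plan is to mimic the proof of \pref{th:pg positive} inside the category $\cC_Z$, with the compatible family $(s_m)$ playing the role of the global section of $\omega_X$. I would fix an SOD $\cC_Z = \la \cA, \cB \ra$ and a closed point $x \in W$ at which $s$ does not vanish, and form the triangle
\[
 b \to \bfk ( x ) \to a \xto{ f } b [1]
\]
with $a \in \cA$, $b \in \cB$. To make sense of ``multiplication by $s$'' acting on $a$ and $b$, I would invoke \pref{rm:rouquier} to find $n \geq 1$ and complexes $a', b' \in D(nZ)$ with $a \simeq \iota_* a'$ and $b \simeq \iota_* b'$, where $\iota \colon nZ \hto X$ is the closed immersion. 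Let $U' \subset nZ$ be the open locus on which $s_n$ is nowhere vanishing; by hypothesis $x \in U'$, and on $U'$ the section $s_n$ trivializes $\omega_X|_{nZ}$.

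The key step will be to show $f'|_{U'} = 0$, where $f' \colon a' \to b'[1]$ corresponds to $f$ under the fully faithful $\iota_*$. Multiplication by $s_n$ defines $\otimes s_n \colon b'[1] \to b'[1] \otimes \omega_X|_{nZ}$ in $D(nZ)$, whose pushforward by the projection formula gives $\otimes s \colon b[1] \to b \otimes \omega_X[1]$ in $D(X)$. Serre duality on $X$ together with semiorthogonality gives
\[
 \Hom_X ( a, b \otimes \omega_X [1] ) \simeq \Hom_X ( b, a [ \dim X - 1 ] )^\vee = 0,
\]
so the composition $(\otimes s) \circ f$ vanishes in $D(X)$. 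Full faithfulness transports this to $(\otimes s_n) \circ f' = 0$ in $D(nZ)$, and since $\otimes s_n$ is an isomorphism on $U'$, this forces $f'|_{U'} = 0$. The restricted triangle splits as $\bfk ( x ) \simeq a'|_{U'} \oplus b'|_{U'}$ in $D(U')$, and indecomposability of $\bfk(x)$ (since $\End(\bfk(x)) = \bfk$) forces one of $a'|_{U'}$ or $b'|_{U'}$ to vanish.

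If $a'|_{U'} = 0$, then $x \nin \Supp a'$ by the derived NAK lemma (\pref{lm:Derived_NAK}), so adjunction gives $\Hom_X(\bfk(x), a) = 0$. Hence the map $\bfk(x) \to a$ in the triangle vanishes, the triangle splits globally as $b \simeq \bfk(x) \oplus a[-1]$, and the summand $a[-1] \in \cA \cap \cB = 0$ forces $a = 0$ and $\bfk(x) \simeq b \in \cB$. The case $b'|_{U'} = 0$ is symmetric and yields $\bfk(x) \in \cA$.

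To conclude, I would rule out the simultaneous occurrence of (a) and (b). For a single point this follows from $\cA \cap \cB = 0$; for two distinct points $x_1, x_2 \in W$ with $s(x_i) \neq 0$, $\bfk(x_1) \in \cA$ and $\bfk(x_2) \in \cB$, I would apply the SOD to $\cO_W \in \cC_Z$ to obtain $c_b \to \cO_W \to c_a \to c_b[1]$. By \pref{lm:useful} and semiorthogonality, $x_2 \nin \Supp c_a$ and $x_1 \nin \Supp c_b$, so $W = (W \cap \Supp c_a) \cup (W \cap \Supp c_b)$ would express the irreducible component $W$ as a union of two proper closed subsets, contradicting its irreducibility. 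The main obstacle I anticipate is the bookkeeping around the thickening: a single $n$ must work for both $a$ and $b$, and the interplay between $\otimes s_n$ on $nZ$ and $\otimes s$ on $X$ through $\iota_*$ must be arranged so that the Serre-duality vanishing on $X$ descends to the needed vanishing on the open subset $U' \subset nZ$.
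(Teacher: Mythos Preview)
Your overall strategy matches the paper's, including the use of $\cO_W$ for the ``all points land on the same side'' step. There is, however, a genuine gap: the pushforward $\iota_* \colon D(nZ) \to D(X)$ is \emph{not} fully faithful. Already for $Z$ a reduced point on a smooth curve $X$ one has $\Hom_{D(Z)}(\bfk,\bfk[1]) = 0$ while $\Ext^1_X(\bfk(x),\bfk(x)) \neq 0$. Consequently there is no reason for your morphism $f \colon a \to b[1]$ in $D(X)$ to arise as $\iota_* f'$ for some $f' \colon a' \to b'[1]$ in $D(nZ)$, and the argument for $f'|_{U'} = 0$ cannot even begin. (You rightly flagged the passage between $D(X)$ and $D(nZ)$ as the delicate point, but the actual obstruction is the failure of fullness of $\iota_*$, not the choice of a common thickening $n$.)

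The paper sidesteps this by lifting \emph{only} $b$. With $b \simeq \iota_* b'$ for some $b' \in D(mZ)$, one defines $\otimes s \colon b \to b \otimes \omega_X$ directly in $D(X)$ as the pushforward of $\otimes s_m \colon b' \to b' \otimes \omega_X|_{mZ}$ via the projection formula. Serre duality then gives $(\otimes s) \circ f = 0$ as an equation in $D(X)$, and one restricts to an open $U \subset X$ (rather than $U' \subset nZ$) chosen so that $s_m$ is invertible on $U \cap mZ$; on such $U$ the map $(\otimes s)|_U$ is an isomorphism, forcing $f|_U = 0$. From here your splitting and indecomposability arguments go through unchanged in $D(U)$, and since $\bfk(x)$ is supported in $U$ the conclusion globalizes exactly as you wrote. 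In short: drop the lifts of $a$ and of $f$, and run the entire restriction argument inside $D(X)$.
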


\begin{proof}
We follow essentially the same line as that of the proof of \pref{th:pg positive}.
Take any closed point $x\in W$ at which $s$ does not vanish.
Consider the decomposition
\begin{equation}
 b \to \bfk ( x ) \to a \xto[]{f} b [ 1 ]. 
\end{equation}

\noindent
Since $b$ is supported in $Z$,
by \pref{rm:rouquier} there exists $ m \ge 1 $ and
$
 b' \in D ( m Z )
$
together with an isomorphism
$
 b \simto \iota_* b',
$
where
$
 \iota \colon mZ \hto X
$
is the natural immersion. Hence we can define the ``multiplication by $s$'' as the
composition of morphisms
\begin{equation}
 b \simto \iota_* b'
 \xto[]{ \iota_* \otimes s_m }
 \iota _* ( b' \otimes \omega _{ X } | _{ m Z } )
 \simto
 \iota_* b' \otimes \omega_X
 \simto
 b \otimes \omega_X. 
\end{equation}
Arguing as in the proof of \pref{th:pg positive}, we can show that
the morphism $f$ vanishes at $x$. Thus we see that $\bfk ( x )$ is contained in
$\cA$ or $\cB$.

Finally, by looking at the decomposition of $\cO_W$ instead of $\cO_X$,
we see that all such closed points are simultaneously contained in $\cA$
or $\cB$.
\end{proof}

The next statement provides us with an inductive way of proving the non-existence of SODs.

\begin{corollary}\label{cr:induction}
Let $X$ be a smooth proper variety and $Z$ a connected closed subscheme.
Define the closed subset $B\subset Z$ by
\begin{equation}
 B
 = \bigcap _{ s \in \varprojlim H^0 ( mZ, \omega _{ X } | _{ mZ })} V ( s _1 ).
\end{equation}
Assume 
$
 B \neq Z
$,
and take an irreducible component $Z_1\subset Z$
which is not contained in $B$.
Then $\cC_Z$ admits no SOD if the same holds
for all $\cC_W$, where $W$ runs through all the connected
components of $\overline{(Z\setminus Z_1)}\cup (B\cap{Z_1})$.
\end{corollary}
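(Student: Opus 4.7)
The plan is to take any SOD $\cC_Z = \la \cA, \cB \ra$ and show that it is trivial, by first using \pref{pr:closed_points_local} applied to $Z_1$ to confine $\cA$ to the smaller category $\cC _{ Z' }$, and then using the inductive hypothesis on each $\cC _{ W_i }$ to kill $\cA$ entirely. Since $Z_1 \not\subset B$, one can find $s = ( s_m )_{ m \ge 1 } \in \varprojlim H^0 ( m Z, \omega _X | _{ m Z } )$ with $s_1$ not identically vanishing on $Z_1$, so \pref{pr:closed_points_local} applies with $W = Z_1$. Any two non-empty Zariski open subsets of the irreducible $Z_1$ meet, so the side on which the residue fields land is independent of the choice of $s$; after exchanging $\cA$ and $\cB$ if necessary we may assume $\bfk ( x ) \in \cB$ for every closed point $x \in Z_1 \setminus B$. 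The second half of \pref{lm:useful} combined with semi-orthogonality then forces $\Supp a$ to avoid $Z_1 \setminus B$ for every $a \in \cA$, so $\Supp a \subset \overline{ Z \setminus Z_1 } \cup ( B \cap Z_1 ) = Z'$, and hence $\cA \subset \cC _{ Z' }$.

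Write $Z' = \bigsqcup _i W_i$ as the disjoint union of its connected components, so that $\cC _{ Z' } = \bigoplus _i \cC _{ W_i }$ orthogonally. Since admissible subcategories are closed under direct summands, $\cA$ inherits a decomposition $\cA = \bigoplus _i \cA _i$ with $\cA _i = \cA \cap \cC _{ W_i }$. A short triangle-splitting argument, entirely parallel to the one in \pref{lm:restriction}, shows that $\cC _{ W_i } = \la \cA _i, \cB \cap \cC _{ W_i } \ra$ is an SOD of $\cC _{ W_i }$ for each $i$. By the hypothesis that $\cC _{ W_i }$ admits no non-trivial SOD, each $\cA _i$ equals either $0$ or $\cC _{ W_i }$.

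To rule out $\cA _i = \cC _{ W_i }$, apply the original SOD to $\cO _{ Z_1 } \in \cC _Z$ to get a triangle $b \to \cO _{ Z_1 } \to a$. Since $\Supp a \subset Z' \cap Z_1$ is a proper closed subset of the irreducible $Z_1$, its dense open complement must lie in $\Supp b$, so $Z_1 \subset \Supp b$. On the other hand, the connectedness of $Z$ forces $W_i \cap Z_1 \neq \emptyset$: otherwise $W_i$ would be a maximal connected union of irreducible components $Z_j$ of $Z$ with $j \neq 1$, all disjoint from $Z_1$ and, by the maximality of $W_i$ as a component of $Z'$, also disjoint from every other non-$Z_1$ irreducible component of $Z$, which would disconnect the dual graph of $Z$. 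Picking any $y \in W_i \cap Z_1$, one has $\bfk ( y ) \in \cA$, so by semi-orthogonality $\Hom ( b, \bfk ( y ) [ j ] ) = 0$ for all $j$; but $y \in \Supp b$ together with the first half of \pref{lm:useful} gives some non-zero such $\Hom$. This contradiction forces $\cA _i = 0$ for every $i$, so $\cA = 0$ and the SOD is trivial.

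The main obstacle is the last connectedness step: the hypothesis that $Z$ itself is connected is essential, since without it one would have $\cA = \cC _{ W }$ as a genuine non-trivial SOD for a disjoint $W$. Setting up the dual-graph argument to also accommodate the isolated pieces contributed by $B \cap Z_1$ is elementary but must be done carefully, since $W_i$ is allowed to contain such pieces which already sit inside $Z_1$ and automatically produce the required intersection with $Z_1$.
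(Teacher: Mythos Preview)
Your argument is correct, but it diverges from the paper's at the final step. Both proofs first use \pref{pr:closed_points_local} to confine one semiorthogonal summand to $\cC_{Z'}$, then split it along the connected components $W_i$ and invoke the inductive hypothesis to conclude that each piece $\cA_i$ is either $0$ or all of $\cC_{W_i}$. At this point the paper does not attempt to rule out $\cA_i = \cC_{W_i}$ directly. Instead it observes that in \emph{either} case $\cA_i \otimes \omega_X = \cA_i$, so the small summand is globally $\omega_X$-stable; by Serre duality this forces the SOD to be an orthogonal decomposition, and the paper has already noted (by the analogue of \pref{lm:no_OD} for thickenings of $Z$) that $\cC_Z$ admits no OD when $Z$ is connected. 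Connectedness is thus used only once, at the OD level, rather than through a component-by-component intersection argument.

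Your route is more hands-on: you eliminate $\cA_i = \cC_{W_i}$ by decomposing $\cO_{Z_1}$, showing $Z_1 \subset \Supp b$, and then producing a point $y \in W_i \cap Z_1$ where semiorthogonality collides with \pref{lm:useful}. This works, and the connectedness step (that every $W_i$ meets $Z_1$) is valid for the reason you sketch, though your phrasing via the dual graph is a little loose; the clean statement is that if $W_i \cap Z_1 = \emptyset$ then $W_i$ is closed in $Z$ and its complement $Z_1 \cup (Z' \setminus W_i)$ is also closed, disconnecting $Z$. The payoff of the paper's approach is economy: it avoids decomposing $\cO_{Z_1}$ and the separate contradiction, and it recycles the Serre-duality mechanism of \pref{cr:iff} verbatim. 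The payoff of yours is that it yields $\cA = 0$ outright without passing through the intermediate OD characterization.
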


\begin{proof}
Note first that $\cC_Z$ admits no OD, since it is connected; proof is
essentially the same as that of \pref{lm:no_OD}, once one replaces `locally free sheaves'
with `locally free sheaves on thickenings of $Z$'.
Hence it is enough to show that any SOD of $\cC_Z$ is in fact an OD.

Take an SOD
$
 \cC _Z = \la \cA, \cB \ra.
$
By \pref{pr:closed_points_local}, one and only one of the followings holds:
\begin{enumerate}[(a)]
\item For any closed point $x\in Z_1 \setminus B$, $\bfk ( x )\in\cA$.
\item For any closed point $x\in Z_1 \setminus B$, $\bfk ( x )\in\cB$.
\end{enumerate}
Let us assume (a) holds. Then, as before, for any
$
 b \in \cB
$
we see
$
 \Supp b \subset
 \overline{ \lb Z \setminus ( Z_1 \setminus B )  \rb }
 = \overline{ ( Z \setminus Z_1 ) } \cup ( B \cap Z_1 )
$.
The rest of the proof is completely analogous to that of \pref{lm:restriction}.
\end{proof}

\begin{remark}
In fact, the arguments above work under weaker assumptions.
It is enough to find infinitely many integers
$
 m > 0
$
such that for each
$
 m
$
one can find
$
 s _m \in H^0 ( m Z, \omega_X | _{ m Z } )
$
which does not vanish at the generic point of an irreducible component of
$
 Z.
$
\end{remark}

%
%

\subsection{Rigidity of semiorthogonal decomposition}
\label{sc:Rigidity_of_Semiorthogonal_decomposition}
We show that SODs are rigid under the actions of topologically trivial autoequivalences.
\begin{theorem}\label{th:rigidity}
Let $X$ be a projective scheme over a field
$
 \bfk
$,
and
$
 \bfD ^{ \perf } \lb X \rb
 =
 \la \cA, \cB \ra
$
be an SOD. Then for any line bundle $L$ such that
$
 [ L ] \in \PPic_{X/\bfk}^0
$,
the equality of subcategories
$
 \cA \otimes L= \cA \subset \bfD ^{ \perf } \lb X \rb
$
holds.
\end{theorem}

We immediately obtain
\begin{corollary}\label{cr:trivial_chern_class}
Let $X$ be a smooth projective variety whose
canonical bundle is contained in $\PPic_{X/\bfk}^0$.
Then $\bfD ^{ b } \lb \coh X \rb$ has no SOD.
\end{corollary}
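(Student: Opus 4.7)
The plan is that this corollary is essentially a direct combination of \pref{th:rigidity} with \pref{cr:iff}, so the work consists of checking that their hypotheses are met and assembling the pieces. Let $D(X)=\la \cA,\cB\ra$ be any SOD. Since $X$ is projective over $\bfk$ and $[\omega_X]\in\PPic^0_{X/\bfk}$, I would apply \pref{th:rigidity} with the line bundle $L=\omega_X$ to obtain the equality of strict full subcategories
\begin{equation}
\cA\otimes\omega_X=\cA\subset D(X).
\end{equation}

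Next I would verify that \pref{cr:iff} applies. Viewed as a smooth proper DM stack, the variety $X$ has empty stacky locus $\cS=\emptyset$, so every closed point is non-stacky; since $\bfk$ is algebraically closed and $X$ is nonempty, such a point exists. Moreover, $X$ being projective (hence with affine diagonal) it satisfies the resolution property, as noted in the remark following \pref{th:locally free}. Thus \pref{cr:iff} is available, and its criterion $\cA\otimes\omega_X\subset\cA$ is exactly what \pref{th:rigidity} just supplied. Consequently the SOD $\la\cA,\cB\ra$ is trivial, meaning $\cA=0$ or $\cB=0$.

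There is essentially no obstacle: the corollary is a packaging result, and the only nontrivial input is \pref{th:rigidity} itself, which is proved elsewhere in the paper. The mild subtlety worth flagging in the write-up is simply to remind the reader why \pref{cr:iff}, stated for stacks, does apply to a smooth projective variety (non-stacky closed points exist and the resolution property holds), so that the chain \pref{th:rigidity}$\Rightarrow$\pref{cr:iff} can be invoked without fuss.
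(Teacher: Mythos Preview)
Your proposal is correct and follows exactly the same route as the paper's proof: apply \pref{th:rigidity} to obtain $\cA\otimes\omega_X=\cA$, then invoke \pref{cr:iff} to conclude the SOD is trivial. The paper is more terse (it does not spell out why \pref{cr:iff} applies to a variety), but the argument is identical.
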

\begin{proof}
By \pref{cr:iff}, it is enough to show that any SOD
$\bfD ^{ b } \lb \coh X \rb=\la \cA, \cB \ra$ satisfies
$
 \cA \otimes \omega_X = \cA
$.
By \pref{th:rigidity},
this follows from the assumption $[\omega_X]\in\PPic^0_{X/\bfk}$.
\end{proof}

\begin{lemma}\label{lm:classical_generator}
Let
$
 X
$
be a quasi-compact and separated scheme and
$
 \cC \subset \bfD ^{ \perf } \lb X \rb
$
be a right or left admissible subcategory. Then
$
\cC
$
admits a classical generator. 
\end{lemma}

See \cite[p. 2]{Bondal-van_den_Bergh} for the definition of classical generator.

\begin{proof}
We first show the existence of classical generator for
$
 \bfD ^{ \perf } \lb X \rb
$.
By \cite[Theorem 3.1.1 (2)]{Bondal-van_den_Bergh},
the derived category
$
 \bfD _{ \Qcoh } \lb X \rb
$
of unbounded complexes of
$
 \cO _{ X }
$-modules with quasi-coherent cohomology is generated by a single compact object
$
 T
$.
Hence by the Ravenel-Neeman theorem \cite[Theorem 2.1.2]{Bondal-van_den_Bergh}
(see also \cite[\href{https://stacks.math.columbia.edu/tag/09SM}{Tag 09SM}]{stacks-project} for a self-contained proof), the subcategory
$
 \bfD _{ \Qcoh } \lb X \rb ^{ c }
$
of compact objects is classically generated by $T$.

On the other hand, it follows from \cite[Corollary 5.5]{MR1214458} that the canonical exact functor
$
 \bfD \lb \Qcoh X \rb \to \bfD _{ \Qcoh } \lb X \rb
$,
where
$
 \bfD \lb \Qcoh X \rb
$
is the derived category of unbounded complexes of quasi-coherent sheaves on $X$,
is an equivalence of triangulated categories. Since
$
 \bfD \lb \Qcoh X \rb ^{ c } = \bfD ^{ \perf } \lb X \rb
$
by \cite[Theorem 3.1.1 (1)]{Bondal-van_den_Bergh}, we obtain the assertion for $\cC =  \bfD ^{ \perf } \lb X \rb$.

%
%
%
%
To obtain a classical generator of general 
$
 \cC
$, 
take the projection of a classical generator of
$
 \bfD ^{ \perf } \lb X \rb
$
by the projection functor. In fact, fix a classical generator
$
 T
$
of
$
 \bfD ^{ \perf } \lb X \rb
$
so that for each object
$
 a \in \cA
$
there is a sequence 
\begin{align}
\xymatrix{
 0 \ar[rr] & & E ^{ 1 } \ar[rr] \ar[dl] & & E ^{ 2 }  \ar[dl] \ar[r] &\cdots \ar[r] & E ^{ n - 1 }
 \ar[rr] \ar[dl] & & E ^{ n } = \iota _{ \cA } \lb a \rb \oplus \exists R \ar[dl]\\
 & S ^{ 1 } \ar@{-->}[ul] & & S ^{ 2 } \ar@{-->}[ul] & \cdots & 
 & & S ^{ n } \ar@{-->}[ul]}
\end{align}
such that
\begin{itemize}
\item
the lower triangles are distinguished triangles,

\item
each
$
 S ^{ i }
$
is a direct sum of shifts of $T$.
\end{itemize}
By applying the projection functor
$
 p _{ \cA }
$
to this diagram, we immediately see that
$
 a \in \cA
$
is classically generated by
$
 p _{ \cA } \lb T \rb
$.
\end{proof}

\begin{lemma}\label{lm:semi-continuity}
Let $X$ be a projective scheme,
$
 a \in \bfD ^{ b } \lb \coh X \rb
$
a bounded complex of coherent sheaves, and
$
 b \in \bfD ^{ \perf } \lb X \rb
$
a perfect complex. Let $T$ be a scheme
of finite type over $\bfk$ with a point $0 \in T(\bfk)$, and
$M$ a line bundle on $X\times _{ \bfk }T$ such that
$
 \RHom(b,a\otimes M_0)=0.
$
Then there exists an open neighborhood
$
 0 \in U \subset T
$
such that for any $t\in U(\bfk)$
$
 \RHom ( b, a \otimes M_t ) = 0.
$
\end{lemma}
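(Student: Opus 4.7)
The plan is to package the family of $\RHom$-complexes into a single object on $T$ and propagate the vanishing at $t=0$ to a neighborhood via the derived Nakayama lemma (\pref{lm:Derived_NAK}). Let $p\colon X\times_{\bfk}T\to T$ and $q\colon X\times_{\bfk}T\to X$ denote the two projections; both are flat, and $p$ is proper because $X$ is projective. Form the relative Hom complex
\[
 F := \bR p_{*}\,\bR\mathcal{H}om_{X\times T}\!\bigl(q^{*}b,\, q^{*}a\otimes M\bigr)\in D(T),
\]
whose cohomology sheaves are coherent on $T$ because $p$ is proper.

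The central step is the base-change identification
\[
 \bL i_{t}^{*}F\;\simeq\;\RHom\!\bigl(b,\, a\otimes M_{t}\bigr)
\]
for each $\bfk$-point $i_{t}\colon\Spec\bfk\to T$. This comes from flat base change for the proper flat morphism $p$ along $i_{t}$, combined with the $T$-flatness of $q^{*}b$ and $q^{*}a\otimes M$ (their cohomology sheaves are pullbacks along the flat map $q$ of coherent sheaves on $X$ or the restriction of a line bundle), which ensures that the derived restrictions along $j_{t}\colon X\cong X_{t}\hto X\times T$ recover $b$ and $a\otimes M_{t}$ without any correction. At $t=0$ the right-hand side vanishes by hypothesis, so $\bL i_{0}^{*}F=0$; hence by \pref{lm:Derived_NAK} we have $0\notin\Supp F$. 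Thus $F$ vanishes on some open neighborhood $0\in U\subset T$, and for any $t\in U(\bfk)$ the base-change isomorphism yields $\RHom(b,a\otimes M_{t})=\bL i_{t}^{*}F=0$.

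The main point requiring care is the commutation of $\bR\mathcal{H}om$ with $\bL j_{t}^{*}$ used in the base-change step: when $b$ is perfect (for instance, when $X$ is smooth) this is automatic, but for a general bounded complex $b$ on a possibly singular projective $X$, the complex $\bR\mathcal{H}om(q^{*}b, q^{*}a\otimes M)$ may be cohomologically unbounded above, and the base-change compatibility is subtler. The clean way to handle this is to argue one cohomological degree at a time via the spectral sequence
\[
 \bR^{s}p_{*}\,\mathcal{E}xt^{r}(q^{*}b,\,q^{*}a\otimes M)\Longrightarrow\cH^{r+s}(F),
\]
identifying each coherent sheaf $\cH^{i}(F)$ with a quantity controlled by finitely many relative Ext sheaves on $X\times T$ whose formation commutes with base change along $T$-flat inputs, and then applying classical Nakayama iteratively. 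The finite cohomological dimension of $p$ ensures that each fibre $\cH^{i}(\bL i_{t}^{*}F)$ depends on only finitely many $\cH^{j}(F)$, so the iteration terminates and produces a single open neighborhood of $0$ on which the relevant cohomology sheaves vanish simultaneously.
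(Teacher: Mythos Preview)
Your approach is correct and essentially identical to the paper's: form $F=\bR p_{T*}\cRHom(p_X^{*}b,\,p_X^{*}a\otimes M)$ on $T$, identify its derived fibres by base change, and invoke \pref{lm:Derived_NAK} to take $U=T\setminus\Supp F$. The one simplification you miss---and which dissolves the worries of your last paragraph---is that flatness of $p_X$ gives $\cRHom(p_X^{*}b,\,p_X^{*}a\otimes M)\simeq p_X^{*}\cRHom(b,a)\otimes M$ immediately; this complex has $T$-flat cohomology sheaves regardless of whether $b$ is perfect, so the fibre identification $\bL\iota_t^{*}F\simeq\bR\Gamma\bigl(X,\cRHom(b,a)\otimes M_t\bigr)\simeq\RHom(b,a\otimes M_t)$ is a single clean application of flat base change, with no need for the spectral-sequence detour.
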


\begin{proof}
It follows from the assumptions on
$
 a
$
and
$
 b
$
that
$
 \bR\cHom (p_X^*b, p_X^*a \otimes M))
$
is a bounded complex of coherent sheaves on
$
 X \times _{ k } T
$.
Since
$
 p _{ T }
$
is projective, it then follows that
\begin{equation}
 S = \Supp { ( \bR p_{T*}\bR\cHom (p_X^*b, p_X^*a \otimes M))} \subset T
\end{equation}
is a closed subset of
$
 T
$.

Next consider the sequence of isomorphisms
\begin{equation}
 \begin{split}
 \bL \iota_t^* \bR p_{T*} \bR\cHom ( p_X^*b, p_X^*a \otimes M)
 \simto \bL \iota_t^* \bR p_{T*} \lb p_X^* \bR\cHom ( b, a ) \otimes M \rb\\
 \simto \bR \Gamma \lb X, \bR\cHom ( b, a ) \otimes M_t \rb
 \simto \RHom ( b, a \otimes M_t ),
\end{split}
\end{equation}
where
$
 \iota_t \colon \{ t \} \hto T
$ is the natural inclusion. The second isomorphism follows from the base change theorem for
flat morphisms (\cite[Corollary 2.23]{MR2238172}).
From this and by \pref{lm:Derived_NAK}, we see that the subset $S$ does not contain $0$. 
Now we can define $U$ as the complement
of $S$.
\end{proof}

\begin{lemma}\label{lm:open}
Let $X$ be a projective scheme over a field
$
 \bfk
$,
and 
$
 \bfD ^{ \perf } \lb  X \rb
 =
 \la \cA, \cB \ra
 = 
 \la \cA', \cB' \ra
$
be two SODs. 
Let $T$ and $M$ be as in \pref{lm:semi-continuity}. 
Then the subset 
$
 U(\cA')
 =
 \lc t \in T ( \bfk ) \mid \cA \otimes M_t = \cA' \rc
\subset T(\bfk)
$
is open. 
\end{lemma}

\begin{proof}
The assertion is trivial when
$
 U \lb \cA ' \rb = \emptyset
$,
we assume 
$
 U \lb \cA ' \rb \neq \emptyset
$.
It is enough to check the following two claims separately
for each point $0\in\ U(\cA')$.
\begin{enumerate}
\item There exists an open neighborhood
$
 0 \in U
$ such that
$
 \cA \otimes M_t\subset\cA'
$ 
holds for any 
$
 t \in U
$. 
\item There exists an open neighborhood
$
 0 \in U
$ such that
$
 \cA \otimes M_t^{-1}\subset\cA'
$ 
holds for any 
$
 t \in U
$. 
\end{enumerate}
We give a proof only for the first one; the second follows from this by replacing
$
 M
$
with
$
 M ^{ - 1 }
$.

By \pref{lm:classical_generator}, we can take classical generators
$
 a \in \cA
$
and
$
 b ' \in \cB '
$.
Then we have the useful criterion
\begin{equation}
 \cA \otimes M_t \subset \cA' = \cB'^{\perp}
 \iff
 \RHom ( b', a \otimes M_t )
 = 0.
\end{equation}
Since the latter condition on
$
 t
$
is known to be open by \pref{lm:semi-continuity}, we are done.
\end{proof}

\begin{proof}[Proof of \pref{th:rigidity}]
By \pref{th:characterization_of_points_of_Pic0} and
\pref{df:algebraic_equivalence}, it is enough to show the following

\begin{claim}
Under the same assumptions as in \pref{th:rigidity},
let $T$ be a connected scheme of finite type over $\bfk$, and
$M$ a line bundle on $X\times_{\bfk}T$.
If $\cA\otimes M_0=\cA$ holds for $0\in T(\bfk)$,
then $\cA\otimes M_t=\cA$ holds for all $t\in T(\bfk)$.
\end{claim}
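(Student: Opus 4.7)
The plan is to combine \pref{lm:open} with the connectedness of $T$. For each strictly full triangulated subcategory $\cA' \subset D(X)$ arising as $\cA \otimes M_t$ for some $t \in T(\bfk)$, consider the subset
\[
U(\cA') = \{ t \in T(\bfk) \mid \cA \otimes M_t = \cA' \} \subset T(\bfk),
\]
so that $T(\bfk) = \bigsqcup_{\cA'} U(\cA')$. By \pref{lm:open} each $U(\cA')$ is an open subset of $T(\bfk)$. The goal is to show that only one of them is non-empty, namely $U(\cA)$.

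The key step I would carry out is to promote each $U(\cA')$ to an open subscheme $V(\cA') \subset T$ satisfying $V(\cA')(\bfk) = U(\cA')$. The construction is essentially read off from the proof of \pref{lm:open}: around each $t_0 \in U(\cA')$ one obtains a Zariski-open neighbourhood in $T$, realised as the complement of the support in $T$ of a certain derived pushforward coming from \pref{lm:semi-continuity}, and taking the union of these neighbourhoods as $t_0$ varies in $U(\cA')$ produces $V(\cA')$. Since $T$ is of finite type over the algebraically closed field $\bfk$, closed points are dense in every non-empty open subscheme of $T$. Consequently, two open subschemes $V(\cA')$ and $V(\cA'')$ whose sets of $\bfk$-points are disjoint must themselves be disjoint as schemes, and the union $\bigcup_{\cA'} V(\cA')$, which contains every closed point of $T$, must exhaust $T$.

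This realises $T$ as a disjoint union of open subschemes $V(\cA')$. The connectedness of $T$ then forces all but one of the $V(\cA')$ to be empty, and the hypothesis $\cA \otimes M_0 = \cA$ places $0 \in U(\cA)$, which singles out the non-empty summand as $V(\cA)$. Hence $T(\bfk) = U(\cA)$, and the claim follows. I do not anticipate a serious obstacle here: the derived-categorical content (semicontinuity for the vanishing of $\RHom$ in a family) is already supplied by \pref{lm:semi-continuity} and \pref{lm:open}, and what remains is a standard density-plus-connectedness argument. The only mild point to check is that the pointwise openness of \pref{lm:open} genuinely glues to a scheme-theoretic open, which is clear because the neighbourhoods produced there are Zariski-local in nature.
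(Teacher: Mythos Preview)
Your proposal is correct and follows essentially the same approach as the paper: partition $T(\bfk)$ into the open subsets $U(\cA')$ and use connectedness to conclude only $U(\cA)$ is non-empty. The paper is slightly terser, arguing directly that $T(\bfk)$ is connected since $\bfk=\overline{\bfk}$, rather than promoting the $U(\cA')$ to open subschemes of $T$, but this is only a cosmetic difference.
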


In order to show the claim, set
$
 S
 =
 \{ \cA \otimes M_t | \ t \in T ( \bfk ) \}. 
$ 
 By \pref{lm:open}, we obtain a decomposition
$
 T(\bfk)=\coprod_{\cA'\in S}U(\cA')
$
of $T(\bfk)$ into disjoint open subsets. Since $T(\bfk)$ is connected by the
assumption $\bfk=\bfkbar$,
this implies that $U(\cA)=T(\bfk)$.
\end{proof}

Let $X$ be a smooth projective variety over $\bfk$.
Using similar arguments, we can also show
$
 g^*\cA = \cA \subset \bfD ^{ b } \lb \coh X \rb
$
for any automorphism
$
 g \in \AAut^0 _{ X / \bfk }
$.
Here
$
 \AAut^0 _{ X / \bfk }
$
is the identity component of the group scheme
$
 \AAut _{ X / \bfk }
$
(see \cite[p.133 Exercise]{MR2222646} for the definition
and the existence of $\AAut_{X/\bfk}$).
Thus we obtain
\begin{corollary}\label{cr:rigidity_with_repsect_to_toplologically_trivial_autoequivalences}
For any SOD
$
 \bfD ^{ b } \lb \coh X \rb = \la \cA, \cB \ra
$
and
$
 \Phi \in \PPic^0 _{ X / \bfk } \rtimes \AAut^0 _{ X / \bfk }
$,
we have
$
 \Phi \cA = \cA \subset \bfD ^{ b } \lb \coh X \rb.
$
\end{corollary}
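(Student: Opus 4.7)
The plan is to assemble the corollary from two pieces: \pref{th:rigidity} for the $\PPic^0$-factor, and the assertion stated just before the corollary that $\phi^*\cA = \cA$ for every $\phi \in \AAut^0_{X/\bfk}(\bfk)$. Since the semi-direct product $\PPic^0_{X/\bfk}\rtimes\AAut^0_{X/\bfk}$ acts on $D(X)$ by $(L,\phi)\cdot E = \phi^*E\otimes L$, the combination is mechanical: for $a\in\cA$ one has first $\phi^*a\in\phi^*\cA=\cA$, and then $\phi^*a\otimes L\in\cA\otimes L=\cA$, giving $g\cA\subset\cA$; applying the same argument to $g^{-1}$ yields equality.

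To justify the automorphism rigidity I would run the proof of \pref{th:rigidity} in parallel, replacing tensor products by pullbacks throughout. For a connected finite-type $\bfk$-scheme $T$ and a $T$-automorphism $\Phi\colon X\times T\to X\times T$, the analogue of \pref{lm:semi-continuity} to be proved is: for $a,b\in D(X)$, the set $\{\,t\in T(\bfk):\RHom(b,\phi_t^*a)\neq 0\,\}$ is closed in $T$. The proof is identical in form to that of \pref{lm:semi-continuity}: the complex
\[
 \bR p_{T*}\cRHom\bigl(p_X^*b,\ \Phi^*p_X^*a\bigr)
\]
has fibre $\RHom(b,\phi_t^*a)$ at $t\in T(\bfk)$ by flat base change, and one concludes by \pref{lm:Derived_NAK}. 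The analogue of \pref{lm:open} is then essentially verbatim: picking classical generators $a$ of $\cA$ and $b'$ of ${}^\perp\hspace{-1mm}\cA'$ for any candidate SOD $\langle \cA',\cB'\rangle$, openness of $U(\cA')=\{t:\phi_t^*\cA=\cA'\}$ reduces to openness of the vanishing $\RHom(b',\phi_t^*a)=0$ together with the corresponding statement with $\Phi^{-1}$ in place of $\Phi$.

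Specializing to $T=\AAut^0_{X/\bfk}$ with $\Phi$ the universal automorphism and basepoint $\mathrm{id}\in U(\cA)$, connectedness of $T$ decomposes $T(\bfk)$ into the disjoint open pieces $U(\cA')$ and forces $U(\cA)=T(\bfk)$, which is exactly the desired automorphism rigidity; combining with \pref{th:rigidity} as explained in the first paragraph finishes the proof. The only potentially delicate point is the flat base change step in the semi-continuity lemma, but since $\Phi$ is a $T$-relative isomorphism (hence flat) this is automatic, so nothing essentially new beyond the machinery of \pref{sc:Rigidity_of_Semiorthogonal_decompositions} is required.
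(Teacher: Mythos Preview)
Your proposal is correct and follows precisely the approach the paper indicates: the paper's own ``proof'' of the corollary is just the sentence immediately preceding it, asserting that the arguments of \pref{th:rigidity} (via \pref{lm:semi-continuity} and \pref{lm:open}) go through for $\AAut^0_{X/\bfk}$ when one replaces $\otimes M_t$ by $\phi_t^*$, after which one combines with \pref{th:rigidity} for the $\PPic^0$-factor. You have simply spelled out those ``similar arguments'' in detail, so there is nothing to add.
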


\noindent
As proven in \cite[Theorem 2.12]{rosay2009some}, the group scheme
$
 \PPic^0 _{ X / \bfk } \rtimes \AAut^0 _{ X / \bfk }
$
coincides with the identity component of the group scheme of autoequivalences of $\bfD ^{ b } \lb \coh X \rb$.

%
%

\section{More results for surfaces}\label{sc:Surfaces}

In this section we closely investigate the non-existence of SOD for minimal projective surfaces.
For $\kappa = 0$, we completely understand when and only when there is a non-trivial SOD.
For $\kappa = 1$, we have a rather satisfactory answer that there is no SOD if
$
 p _{ g } > 0
$.
For $\kappa = 2$, we show the non-existence of SOD under strong assumptions. The assumption seems to be a technical one, and it remains to be a future task to get rid of it.

Readers can refer to \cite{MR986969} for various notions for surfaces in positive characteristics,
such as non-classical Enriques surfaces, quasi-elliptic fibrations, wild fibers, and quasi-bielliptic surfaces. The following result plays a central role for the cases
$
 \kappa = 0, 1
$.

Let $f\colon X\to C$ be a relatively minimal (quasi-)elliptic surface
with multiple fibers
$
 X _{ c_i } = m_i F_i \quad (i = 1, \dots, k)
$.
Then we have the Kodaira-Bombieri-Mumford canonical bundle formula (see \cite[Theorem 2]{MR0491719})
\begin{equation}\label{eq:Kodaira}
\omega_X \simeq f^*(\omega_C\otimes \bR ^1f_*\cO_X/T)\otimes
\cO_X
\lb \sum_i a_i F_i
\rb,
\end{equation}
where
$
 T \subset
 \bR ^1 f_* \cO_X
$
is the torsion part and
$
 0 <  a_i \le m_i - 1
$
are some integers.
It is known that
$
 \omega_C \otimes \bR ^1 f_*\cO_X / T
$
is a line bundle of degree
$
 2g(C) - 2 + \chi ( \cO_X ) + \mathop {\textrm{length}} { T }
$.

%
%

\subsection{$\kappa=0$}

Since classical Enriques surface satisfies $p_g=q=0$, any line bundle on it
is exceptional.
Hence the derived category always admits a non-trivial SOD
(see \cite{MR3302614} and \cite{hosono2015derived}
 for further results on this topic).
For non-classical Enriques, abelian, and K3 surfaces we have no SOD by \pref{cr:iff},
since their canonical bundles are trivial.

The most non-trivial is the following
\begin{proposition}
Let
$
 X
$
be an (quasi-)bielliptic surface. Then
$
 \bfD ^{ b } \lb \coh X \rb
$
admits no SOD.
\end{proposition}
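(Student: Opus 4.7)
The plan is to verify the hypothesis of \pref{cr:trivial_chern_class}, namely $[\omega_X]\in\PPic^0_{X/\bfk}$; by that corollary this rules out every SOD of $D(X)$. Two ingredients from the Bombieri--Mumford classification of surfaces with $\kappa=0$ feed into the argument: for any (quasi-)bielliptic surface $X$ the canonical bundle $\omega_X$ is a torsion element of $\Pic(X)$, and the Albanese morphism $\alpha\colon X\to B$ is a (quasi-)elliptic fibration onto an elliptic curve $B$ with connected fibers.

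The key observation I would exploit is that every scheme-theoretic fiber $C_b$ of $\alpha$ is a Gorenstein curve of arithmetic genus $1$ with $\omega_{C_b}\simeq\cO_{C_b}$. Indeed $C_b$ is Gorenstein because $\alpha$ is flat between smooth schemes and hence l.c.i.; $p_a(C_b)=1$ is constant by flatness, the generic fiber being either a smooth elliptic curve or an irreducible rational cuspidal curve; and then any nonzero global section trivializes the degree-zero line bundle $\omega_{C_b}$. Thus $\omega_{X/B}$ is fiberwise trivial, and cohomology-and-base-change with $h^0(\omega_{C_b})=1$ constant produces a line bundle $M:=\alpha_*\omega_{X/B}$ on $B$ together with an isomorphism $\alpha^*M\simto\omega_{X/B}$. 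Combined with $\omega_B\simeq\cO_B$ this yields $\omega_X\simeq\alpha^*M$.

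From here the remaining steps are quick. The pullback $\alpha^*\colon\Pic(B)\to\Pic(X)$ is injective (projection formula and $\alpha_*\cO_X\simeq\cO_B$), so torsion of $\omega_X$ forces $M$ to be torsion on the elliptic curve $B$; every torsion line bundle on an elliptic curve lies in $\PPic^0$, and pullback preserves algebraic equivalence, so $[\omega_X]=\alpha^*[M]\in\PPic^0_{X/\bfk}$, and \pref{cr:trivial_chern_class} concludes the proof. The step I expect to require the most care is the triviality $\omega_{C_b}\simeq\cO_{C_b}$ on every Albanese fiber in the quasi-bielliptic case: if some fibers turn out to be reducible, one can no longer deduce triviality purely from $\deg\omega_{C_b}=0$ and $h^0\geq 1$, and one would have to invoke the theory of fibers of canonical type for relatively minimal (quasi-)elliptic fibrations as an additional input.
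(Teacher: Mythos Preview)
Your strategy coincides with the paper's: verify $[\omega_X]\in\PPic^0_{X/\bfk}$ by writing $\omega_X\simeq f^*L$ for a torsion line bundle $L$ on the elliptic Albanese base, then invoke \pref{cr:trivial_chern_class}. The only difference is in how the isomorphism $\omega_X\simeq f^*L$ is obtained. The paper simply quotes the Kodaira--Bombieri--Mumford canonical bundle formula \eqref{eq:Kodaira} together with the classification fact that the Albanese fibration of a (quasi-)bielliptic surface has \emph{no multiple fibers}; the $\sum a_iF_i$ term then disappears and $\omega_X\simeq f^*L$ drops out immediately. You instead re-derive this by arguing $\omega_{C_b}\simeq\cO_{C_b}$ on every fiber and running cohomology and base change. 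This works, but the concern you flag at the end---possible reducible fibers in the quasi-bielliptic case---is exactly what the paper's route sidesteps: once one knows there are no multiple fibers, the canonical bundle formula absorbs all singular fibers uniformly with no case analysis needed. (In fact the Albanese fibration of a (quasi-)bielliptic surface is isotrivial, so your worry does not materialize; but appealing to the canonical bundle formula spares you from having to check this.)
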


\begin{proof}
The idea is to use
\pref{cr:trivial_chern_class}.
In order to check its assumption, take a (quasi-)elliptic fibration
$
 f \colon X \to C
$
without multiple fibers.
By \eqref{eq:Kodaira}, there exists a line bundle $L$ on $C$
such that
$
 \omega_X \simeq f^* L
$.
Since
$
 f^* \colon \PPic _{ C / \bfk } \to \PPic _{ X / \bfk }
$
preserves the identity components, it is enough to show that
$
 [L] \in \PPic^{0} _{ C / \bfk }
$.

By the assumption there is a positive integer $N$ for which
$
 \omega_X ^{ \otimes N } \simeq \cO _{ X }
$.
Together with the projection formula, this implies
\begin{align}
 L ^{ \otimes N }
 \simeq
 f _{ * } f ^{ * } L ^{ \otimes N }
 \simeq
 f _{ * } \cO _{ X }
 \simeq
 \cO _{ C }.
\end{align}
Since $C$ is a smooth projective curve, this implies
$
 [L] \in \PPic^{0} _{ C / \bfk }
$.
Thus we conclude the proof.
\end{proof}

%
%

\subsection{$\kappa=1$}\label{sc:kappa=1}

\begin{theorem}\label{th:elliptic_surface}
Let $f\colon X\to C$ be a relatively minimal elliptic or quasi-elliptic surface.
If $p_g(X)>0$, then $\bfD ^{ b } \lb \coh X \rb$ has no SOD.
In particular, this applies to the case when $X$ is a minimal surface of Kodaira dimension $1$ with $p_g ( X ) >0$.
\end{theorem}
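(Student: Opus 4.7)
The plan is to combine \pref{th:pg positive}, \pref{cr:iff}, \pref{lm:invariant}, and \pref{th:rigidity}, reducing the problem fiber by fiber via an orthogonal decomposition as in the proof of \pref{lm:restriction}.

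First, for any SOD $D(X) = \la \cA, \cB \ra$, the assumption $p_g(X) > 0$ forces $\Bs|\omega_X| \neq X$. By \pref{th:pg positive} we may assume, after possibly swapping $\cA$ and $\cB$, that $\cB$ is supported in $\Bs|\omega_X|$, and by \pref{cr:iff} it then suffices to prove $\cB \otimes \omega_X = \cB$. The Kodaira--Bombieri--Mumford formula \eqref{eq:Kodaira}, together with $f_* \omega_X = \omega_C \otimes \bR^1 f_* \cO_X / T$ (valid since $0 < a_i < m_i$), shows that $\Bs|\omega_X|$ is set-theoretically the union of the multiple-fiber components $F_i$ with $a_i > 0$ and the fibers over the base locus of $|f_*\omega_X|$ on $C$. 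The argument in \pref{lm:restriction} yields an orthogonal decomposition $\cB = \bigoplus_Z \cB_Z$ over connected components $Z$ of $\Bs|\omega_X|$, so it suffices to prove $\cB_Z \otimes \omega_X = \cB_Z$ component by component.

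For $Z = f^{-1}(c)$ a non-multiple fiber: choose a small open $U \subset C$ around $c$ disjoint from the multiple-fiber locus. On $f^{-1}(U)$ no multiple fiber appears and $f_*\omega_X|_U \simeq \cO_U$, so $\omega_X$ is trivial on this neighborhood of $Z$; \pref{lm:invariant} then gives $b \otimes \omega_X \simeq b$ for every $b \in \cB_Z$.

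The heart of the argument is the multiple-fiber case $Z = F_i$ with $a_i > 0$, where \pref{th:rigidity} enters. Fix a general closed point $c_0 \in C$ outside the multiple fibers and $\Bs|f_*\omega_X|$, so that $X_{c_0}$ is a smooth fiber disjoint from $F_i$. The identity $\cO_X(m_i F_i) = f^*\cO_C(c_i)$ gives $\cO_X(m_i F_i) \otimes \cO_X(-X_{c_0}) = f^*\cO_C(c_i - c_0) \in f^*\PPic^0_{C/\bfk} \subset \PPic^0_{X/\bfk}$; by \pref{th:rigidity}, $\cB$ is preserved by tensoring with it, and since $\cO_X(X_{c_0})$ is trivial on a neighborhood of $F_i$, \pref{lm:invariant} yields $\cB_{F_i} \otimes \cO_X(m_i F_i) = \cB_{F_i}$. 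The same method applied to $f^*L = \cO_X(\deg L \cdot X_{c_0}) \otimes f^*(L \otimes \cO_C(-\deg L \cdot c_0))$, where the first factor is killed by \pref{lm:invariant} and the second lies in $\PPic^0_{X/\bfk}$, shows that $\cB_{F_i}$ is invariant under $\otimes f^*L$, reducing the problem to $\cB_{F_i} \otimes \cO_X(a_i F_i) = \cB_{F_i}$. The main obstacle I foresee is promoting the $m_i$-th power invariance just obtained to invariance under $\cO_X(a_i F_i)$ itself when $a_i < m_i$; the expected route is to exploit that $\otimes \cO_X(F_i)$ restricts to a finite-order autoequivalence of $\cC_{F_i}$ (whose $m_i$-th iterate is the identity by \pref{lm:invariant}), together with the Serre-functor structure on $\cC_{F_i}$ inherited from $D(X)$, to force the cyclic symmetry to preserve $\cB_{F_i}$.
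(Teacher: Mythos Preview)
Your reduction to the multiple fibers is correct, but the gap you flag at the end is real and your suggested fix does not close it. Knowing that $\otimes\,\cO_X(F_i)$ acts on $\cC_{F_i}$ with order dividing $m_i$, together with the Serre functor, does not force an arbitrary SOD of $\cC_{F_i}$ to be invariant: a finite cyclic group of autoequivalences can perfectly well permute semiorthogonal components rather than fix them, and nothing in your setup rules this out. The obstruction is genuine: $\cO_X(F_i)$ is \emph{not} in $\PPic^0_{X/\bfk}$, so \pref{th:rigidity} applied to $X$ gives you nothing beyond the $m_i$-th power you already have from \pref{lm:invariant}.

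The paper resolves this by changing where rigidity is applied. Once one of the two summands is supported in finitely many fibers, the SOD is automatically $C$-linear (pullbacks of line bundles on $C$ are trivial near those fibers), so Kuznetsov's base change theorem produces, for each $s\in C$, an honest SOD $D^b\coh X_s=\la \cA_{X_s},\cB_{X_s}\ra$ of the scheme-theoretic fiber. Now one applies \pref{th:rigidity} to the projective \emph{curve} $X_s$: there $\omega_{X_s}$ is torsion, and on a complete curve every torsion line bundle lies in $\PPic^0$, so the fiberwise SOD is trivial. A NAK argument then forces one of $\cA,\cB$ to vanish globally. The moral is that the line bundle $\cO_X(F_i)$, which is not in $\PPic^0$ of $X$, becomes torsion---hence in $\PPic^0$---after restriction to the fiber; passing to the fiber is exactly what is needed to make rigidity bite.
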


\begin{proof}
Since the contribution from the multiple fibers in the RHS
of \pref{eq:Kodaira} is fixed as a linear system and $f$ is an algebraic fiber space,
if $p_g(X)>0$, then $\Bs| \omega_X |$ is a union of finitely many fibers of
$
 f
$.
The finite set
$
 f ( \Bs | \omega _X | ) \subset C
$
will be denoted by
$
 S
$.

Take any SOD
$
 \bfD ^{ b } \lb \coh X \rb = \la \cA, \cB \ra
$.
By \pref{th:pg positive}, either
$
 \cA
$
or
$
 \cB
$
is supported in
$
 f ^{ - 1 } ( S )
$.
This implies that the SOD under consideration is
$
 C
$-linear in the sense of \cite{MR2801403}; to see this,
note that the pull-back of any locally free sheaf on $C$
is trivial on an open neighborhood of
$f ^{ - 1 } ( S )$. Also since $f$ is flat, we can apply
\cite[Theorem 5.6]{MR2801403}
to any base change of
$
 f
$.
In particular, for any closed point
$
 s \in S
$
we obtain the SOD
$
 \bfD ^{ \perf } \lb X _{ s } \rb = \la \cA _{ X_s } ^{ \perf }, \cB _{ X_s } ^{ \perf } \ra
$
(following the notation of \cite{MR2801403}).
On the other hand, since
$\omega_{ X _s }$ is torsion and any torsion line bundle on
a complete curve over a field is contained in its
$
 \PPic^0
$
(see \cite[Chapter 0 Section 7]{MR986969}),
we can use \pref{th:rigidity} to see
$
 \cA _{ X_s }^{ \perf } \otimes \omega _{ X_s } = \cA _{ X_s } ^{ \perf } \subset \bfD ^{ \perf } \lb X_s \rb
$.
Since the Serre duality works in
$
 \bfD ^{ \perf } \lb X_s \rb
$
(see, say, \cite[Proposition 7.47 and Remark 7.48]{MR2434186}), this implies that the SOD is actually an OD.

Now since there is no OD of
$
 \bfD ^{ \perf } \lb X _{ s } \rb
$
 by \pref{lm:no_OD}, it follows that either
$
 \cA _{ X_s } ^{ \perf } = 0
$
or
$
 \cB _{ X_s } ^{ \perf } = 0
$
should hold for any
$
 s \in S
$.

Without loss of generality, let us assume that
$
 \cA
$
is supported in
$
 f ^{ - 1 } ( S )
$
for the rest of proof.
Assume for a contradiction
$
 \cA \neq 0
$.
By the construction of
$
 \cA _{ X_s } ^{ \perf }
$
given in \cite[Section 5.4]{MR2801403},
we see that for any object
$
 a \in \cA,
$
its base change
$
 a_s = a {}^{ \bL }\!\!\otimes _{ \cO _C } \cO _s
$
is contained in
$
 \cA _{ X_s } ^{ \perf }
$
(note that
$
 \bfD ^b \lb \coh X \rb  = \bfD ^{ \perf } \lb X \rb
$).
Therefore, by \pref{lm:Derived_NAK},
there should be a point
$
 s \in S
$
for which
$
 \cA _{ X_s } \neq 0
$.
Then we obtain
$
 \cB _{ X_s } = 0
$,
so that any object
$
 b \in \cB
$
satisfies
$
 \Supp b \cap f ^{ - 1 } ( s ) = \emptyset
$
by
\pref{lm:Derived_NAK} again.
Since
$
 f
$
is proper, this implies that any object
$
 b \in \cB
$
is supported in a union of finitely many fibers.
Thus we conclude that any object in either
$
 \cA
$
or
$
 \cB
$
should be supported in a union of finitely many fibers, and it clearly contradicts the assumption that
$
 \bfD ^{ b } \lb \coh X \rb
$
is generated by
$
 \cA
$
and
$
 \cB
$.
\end{proof}

\begin{remark}
\begin{enumerate}
\item
Our method is also applicable to other situations in which
we have a sufficiently nice canonical bundle formula (see \pref{sc:Toward_higher_dimensions}).

\item
(Assume $\bfk = \bC$ for simplicity.)
Let $X$ be a minimal projective surface with $\kappa{(X)}=1$.
If $p_g(X)=0$, $h^1(\cO_X)$ should be either $0$ or $1$ since
$\chi{(\cO_X)}\ge 0$ holds (see \cite[Chapter V, \S 12]{Barth-Hulek-Peters-Van_de_Ven}).
If $h^1(\cO_X)=0$, as we saw before, any line bundle is exceptional.
If $h^1(\cO_X)=1$ (and hence $\chi{(\cO_X)}=0$), although we can restrict the nature
of the fibration as follows, we do not know if $\bfD ^{ b } \lb \coh X \rb$ can admit an SOD or not.
\begin{itemize}
\item $g(C)=0$ or $1$ by the canonical bundle formula and the assumption $p_g(X)=0$.
\item Smooth fibers are all isomorphic to one another and the multiple fibers are of type $_mI_0$ for some
$m>0$ (see \cite[Chapter III, \S 18]{Barth-Hulek-Peters-Van_de_Ven}).
\end{itemize}
\end{enumerate}
\end{remark}

%
%

\subsection{$\kappa=2$}
We apply the results of \pref{sc:Local_situation} to
smooth projective minimal surfaces with
$
 \kappa = 2
$
(i.e., of general type).
There are some examples of minimal surfaces of general type on which the
connected components of fixed part of the canonical linear system can be birationally
contracted to points (in the category of algebraic spaces).
This property turns out to ensure the non-existence of SOD.

\begin{theorem}\label{th:negative_definite}
Let $X$ be a smooth projective minimal surface of general type with $p_g\ge 2$.
Assume the following condition (*).
\begin{quote}
(*)
For any one-dimensional connected component $Z$ of $\Bs| \omega_X |$,
its intersection matrix is negative definite.
\end{quote}

Then $\bfD ^{ b } \lb \coh X \rb$ has no SOD.
\end{theorem}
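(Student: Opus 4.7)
My plan is to reduce the theorem, via Lemma~\ref{lm:restriction} (applicable because $p_g(X)\ge 2>0$), to showing that $\cC_Z$ admits no non-trivial SOD for each connected component $Z\subset\Bs|\omega_X|$, and then to treat the $0$- and $1$-dimensional components separately.

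For $Z=\{x\}$ a single closed point, suppose $\cC_{\{x\}}=\la\cA,\cB\ra$ is an SOD. Since $\bfk(x)$ is simple it lies in a single factor, say $\bfk(x)\in\cA$. For any non-zero $b\in\cB$ we have $x\in\Supp b$, so Lemma~\ref{lm:useful} supplies a non-zero morphism $b\to\bfk(x)[i]$, contradicting semiorthogonality; hence $\cB=0$.

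For a one-dimensional connected component $Z$ I argue by induction on the number of irreducible components, via Corollary~\ref{cr:induction}. By the remark following that corollary it is enough, at each step, to exhibit for infinitely many $m\ge 1$ a section $s_m\in H^0(mZ,\omega_X|_{mZ})$ that is generically non-vanishing on some irreducible component $Z_1\subset Z$. Granting this, $Z_1\not\subset B$, and Corollary~\ref{cr:induction} reduces $\cC_Z$ to $\cC_W$ for each connected component $W$ of $\overline{Z\setminus Z_1}\cup(B\cap Z_1)$. Each such $W$ is either $0$-dimensional (handled above) or a one-dimensional subconfiguration of $Z$ whose intersection matrix is a principal submatrix of the negative definite matrix of $Z$, and so is again negative definite; the induction therefore terminates in finitely many steps.

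To construct the $s_m$ I invoke the Grauert--Artin contractibility theorem: the negative definiteness of $Z$ produces a proper birational morphism $\phi\colon X\to X'$ onto a normal algebraic space $X'$, contracting $Z$ to a single point $p\in X'$ and an isomorphism off $Z$. Since $\omega_X$ is invertible and $\phi$ is birational, $\phi_*\omega_X$ is a coherent sheaf on $X'$ of generic rank $1$, so its stalk at $p$ is a non-zero finitely generated $\cO_{X',p}$-module. The theorem on formal functions gives
\[
\varprojlim_m H^0(mZ,\omega_X|_{mZ})\ \cong\ (\phi_*\omega_X)_p^{\wedge},
\]
and combining this with Grauert--Riemenschneider vanishing $R^1\phi_*\omega_X=0$ (valid for surfaces in arbitrary characteristic) and proper base change, the fibre $(\phi_*\omega_X)_p\otimes k(p)$ identifies with $H^0(\phi^{-1}(p),\omega_X|_{\phi^{-1}(p)})$; by Nakayama's lemma this is non-zero. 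The main obstacle will be the final step, namely that such sections can be chosen to restrict non-trivially to an irreducible component of $Z$ rather than vanishing identically on $Z$ to arbitrarily high order. My intended argument here is by contradiction: if every formal section of $\omega_X$ along $Z$ vanished identically on each component of $Z$, one could iteratively divide by a local equation of $Z$ to manufacture nontrivial elements in higher-order quotients of $(\phi_*\omega_X)_p^{\wedge}$ whose vanishing order along $Z$ is unbounded, contradicting the Krull intersection theorem applied to the finitely generated module $(\phi_*\omega_X)_p$ over the local ring $\cO_{X',p}$.
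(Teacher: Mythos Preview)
Your reduction is exactly the paper's: use \pref{lm:restriction} to pass to each connected component $Z$ of $\Bs|\omega_X|$, dispose of the $0$-dimensional components directly, and for one-dimensional $Z$ run an induction on the number of irreducible components via \pref{cr:induction}. The only substantive step is producing, for each reduced connected one-cycle $W\subset\Bs|\omega_X|$ with negative definite intersection matrix, sections $s_m\in H^0(mW,\omega_X|_{mW})$ whose restriction to the \emph{reduced} curve $W$ is nonzero. This is Lemma~\ref{lm:non-vanishing} in the paper, and here your approaches diverge.

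The paper's argument is elementary and direct: Riemann--Roch plus the $2$-connectedness of canonical divisors on a minimal surface of general type give $h^0(W,\omega_X|_W)>0$ (this is precisely where $p_g\ge 2$ enters, to ensure $K_X-W$ is linearly equivalent to a nonzero effective divisor so that $W\cdot(K_X-W)\ge 2$). Then the section lifts from $(m-1)W$ to $mW$ because the obstruction $H^1(W,\omega_X(-(m-1)W)|_W)$ is Serre dual to $H^0(W,\cO_W(mW))$, and this vanishes by negative definiteness.

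Your proposed route through the Artin contraction $\phi\colon X\to X'$ and the theorem on formal functions does show that $(\phi_*\omega_X)_p^\wedge\ne 0$, but the final Krull intersection step has a genuine gap. What Nakayama (together with $R^1\phi_*\omega_X=0$) gives you is a section that is nonzero on the \emph{scheme-theoretic} fibre $\phi^{-1}(p)$, which is typically a thickening of $Z$; such a section may well vanish on the reduced curve. Your iteration ``divide by a local equation of $Z$'' does not stay inside the same module: if every element of $M=\varprojlim H^0(mZ,\omega_X|_{mZ})$ restricts to zero on $Z$, you conclude $M\cong\varprojlim H^0(mZ,\omega_X(-Z)|_{mZ})$, which is the formal completion of $\phi_*(\omega_X(-Z))$, a \emph{different} module; there is no reason the analogous restriction map for this new module is also zero, so the descent does not continue and Krull intersection on $(\phi_*\omega_X)_p$ is never actually invoked. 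To make your approach work you would still need to prove $H^0(W,\omega_X|_W)\ne 0$ and that the transition maps $H^0((m+1)W,\ldots)\to H^0(mW,\ldots)$ are surjective; but the latter is exactly the paper's $H^0(W,\cO_W(mW))=0$, and the former is where the paper uses $2$-connectedness and the hypothesis $p_g\ge 2$.
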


\begin{proof}
By \pref{cr:induction},
it is enough to show that for any one-dimensional connected component $Z$ of
$\Bs| \omega_X |$, the category
$
 \cC_Z
$
has no SOD.
We prove this for more general
$
 \cC_W
$,
where
$
 W
$
is any reduced connected one cycle contained in $\Bs| \omega_X |$,
by an induction on the number of irreducible components of $W$.
If $W$ is empty, there is nothing to show.
In general we can use the following

\begin{lemma}\label{lm:non-vanishing}
Under the assumptions of \pref{th:negative_definite},
let $W$ be a reduced connected one-cycle which is contained in $\Bs| \omega_X |$.
Then there exists
$
 s = ( s_m ) _{ m \ge 1 } \in \varprojlim H^0 ( m W, \omega _{ X } | _{ m W } )
$
which does not vanish at the generic point of an irreducible component of $W$.
\end{lemma}

\noindent
This implies the strict inequality
$
 B \subsetneq W
$
(see \pref{cr:induction} for notation).
Since $W$ has pure dimension $1$, we can pick an irreducible curve
$
 Z_1 \subset W
$
which is not contained in $B$.
By \pref{cr:induction} it is then enough to show the non-existence of SOD
for $\cC_{W'}$, where
$
 W'
$
are connected components of
$
 \overline{ W \setminus Z_1 } \cup ( B \cap Z_1 ).
$
Since $W'$ is either a point or a connected one cycle whose number of irreducible
components is strictly less than that of $W$, we can apply the induction hypothesis.
\end{proof}

\begin{proof}[Proof of \pref{lm:non-vanishing}]
By the Riemann-Roch
(\cite[Chapter II, Theorem 3.1.]{Barth-Hulek-Peters-Van_de_Ven})
and the adjunction formula, we see
$
 h^0 ( \omega _X | _{W} ) = h^1(\omega_X|_{W}) + \frac{1}{2} W \cdot ( K_X - W ).
$
Since we assumed
$
 p_g ( X ) > 1,
$
$
 K_X - W
$
is linearly equivalent to a non-zero effective divisor.
Hence by the $2$-connectedness of the canonical divisor of minimal surfaces of general type
(\cite[Chapter VII, Proposition 6.2. (ii)]{Barth-Hulek-Peters-Van_de_Ven}), we see
$
 W \cdot (K_X - W) \ge 2
$. Thus we obtain $h^0(\omega_X|_{W})>0$.

Take any non-zero global section $s_1$ of $\omega_X|_{W}$.
Since $W$ is reduced, $s_1$ is generically non-vanishing on at least one
irreducible component of $W$.
For each $m>1$ we show that the global section $s_{m-1}$ of
$
 \omega _X | _{ ( m - 1 ) Z }
$
lifts to a global section
$s_m$ of
$
 \omega_X | _{ mW }
$, so as to obtain the desired
$
 s = ( s_m ) _{ m \ge 1 } \in \varprojlim H^0 ( m W, \omega _{ X } | _{ m W } )
$.
Consider the exact sequence
\begin{equation}
 0
 \to
 \cO_{W}( K_X - ( m - 1 ) W )
 \to
 \cO_{mW} ( K_X )
 \to
 \cO _{ ( m - 1 ) W } ( K_X )
 \to 0
\end{equation}
and the associated cohomology long exact sequence.
This yields an exact sequence
\begin{equation}
 H^0(mW, \cO_{mW}(K_X))\to H^0((m-1)W, \cO_{(m-1)W}(K_X))
 \to
 H^1(W, \cO_{W}(K_X-(m-1)W)),
\end{equation}
and hence it is enough to show the vanishing of the third term.
By the adjunction formula and the Serre duality
for embedded curves (see \cite[Chapter II, Section 1]{Barth-Hulek-Peters-Van_de_Ven}),
its dimension can be rewritten as
$
 h^1(W, \cO_{W}(K_X-(m-1)W))= h^0(W, \cO_W(mW))
$.
Finally, the vanishing of the RHS follows from the assumption $W^2<0$.
\end{proof}

\begin{example}\label{eg:Horikawa}
Minimal surfaces $X$ of general type with $p_g=K_X^2=2$ and
$q=0$ were investigated in \cite{MR517773}. Among them, those of type III
(see \cite[page 104]{MR517773}) satisfy the assumption of \pref{th:negative_definite}.
In fact the fixed part consists of a ($-2$)-curve. 
In this example the moving part
of the canonical linear system is base point free and defines a genus two pencil over the projective line
(\cite[Theorem 1.3]{MR517773}).
\end{example}
%

%
%

\section{$\kappa = 1$ in higher dimensions}\label{sc:Toward_higher_dimensions}

Most of the arguments of \pref{sc:kappa=1} can be generalized
to higher dimensions, except one point.

\begin{theorem}\label{th:when kappa=1}
Let $X$ be a non-singular projective $n$-fold defined over $\bfk$ such that
$X$ is a minimal model with $\kappa{(X)}=1$ and $p_g(X)>0$.
Suppose
$
 \omega _X
$
is semi-ample so that the canonical morphism
$
 f \colon X \to C
$
exists.
Suppose that for any scheme-theoretic fiber $X_c$ of $f$ we have
$
 \omega _{ X } | _{ X_c } \in \PPic^0 _{ X_c }.
$
Then $\bfD ^{ b } \lb \coh X \rb$ admits no SOD.
\end{theorem}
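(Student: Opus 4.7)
The plan is to adapt the proof of \pref{th:elliptic_surface} to the higher-dimensional setting, exploiting the fact that $\dim C = \kappa ( X ) = 1$, so $C$ is again a curve. First I would locate $\Bs | \omega_X |$. After replacing $f$ by its Stein factorization we may assume $f_* \cO_X \simeq \cO_C$. Since $\omega_X$ is semi-ample, for sufficiently divisible $N > 0$ there exists an ample line bundle $L$ on $C$ with $\omega_X^{\otimes N} \simeq f^* L$ and $H^0 ( C, L ) \simto H^0 ( X, \omega_X^{\otimes N} )$. For any $s \in H^0 ( X, \omega_X )$ we then have $s^N = f^* t$ for some $t \in H^0 ( C, L )$, so the zero set of $s$ equals $f^{-1}$ of the zero set of $t$, which is a union of finitely many fibers. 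Since $p_g ( X ) > 0$ provides such a non-zero $s$, intersecting over all such $s$ yields $\Bs | \omega_X | = f^{-1} ( S )$ for some finite subset $S \subset C$.

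Given an SOD $D ( X ) = \la \cA, \cB \ra$, \pref{th:pg positive} allows us to assume $\cA$ is supported in $f^{-1} ( S )$. Since every line bundle on the curve $C$ trivialises on some open neighborhood of the finite set $S$, \pref{lm:invariant} gives $a \otimes f^* M \simeq a$ for all $a \in \cA$ and all line bundles $M$ on $C$; it follows that the SOD is $C$-linear in the sense of \cite{MR2801403}. Applying \cite[Theorem 5.6]{MR2801403} to base changes, we obtain an induced SOD $D^b \coh X_c = \la \cA_{X_c}, \cB_{X_c} \ra$ for each closed point $c \in C$.

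Now fix $c \in C$. Since $X_c$ is a Cartier divisor in the smooth variety $X$, adjunction yields $\omega_{X_c} \simeq \omega_X|_{X_c}$ canonically, which by hypothesis lies in $\PPic^0_{X_c}$; in particular $X_c$ is Gorenstein and projective. Applying \pref{th:rigidity} on $X_c$ gives $\cA_{X_c} \otimes \omega_{X_c} = \cA_{X_c}$, and the argument of \pref{cr:iff} (whose only essential requirements are Serre duality and the resolution property) then forces $\cA_{X_c} = 0$ or $\cB_{X_c} = 0$. Supposing $\cA \neq 0$, derived Nakayama (\pref{lm:Derived_NAK}) ensures $\cA_{X_c} \neq 0$ for some $c$, so $\cB_{X_c} = 0$; applying the divisor-level version of derived Nakayama, every $b \in \cB$ has support disjoint from this $X_c$. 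Since $f$ is proper and $C$ is a curve, $f ( \Supp b )$ is a proper closed subset of $C$ and hence finite, so $\cB$ too is supported in finitely many fibers, contradicting the fact that $\cA$ and $\cB$ together generate $D ( X )$.

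The main obstacle I expect is that the special fibers $X_c$ may be singular or non-reduced, whereas \pref{cr:iff} is stated for smooth DM stacks. Fortunately $X_c$ is Gorenstein and projective, so Serre duality and the resolution property are both available, and the proof of \pref{cr:iff} extends verbatim. Apart from this verification, the remainder of the argument is formally identical to the elliptic surface case.
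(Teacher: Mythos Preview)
Your argument is correct and follows the same overall route as the paper: use \pref{th:pg positive} to confine one summand to finitely many fibers, deduce $C$-linearity, base change the SOD to each fiber via \cite[Theorem 5.6]{MR2801403}, apply \pref{th:rigidity} and (the Gorenstein version of) \pref{cr:iff} on the fibers, and conclude by derived Nakayama exactly as in \pref{th:elliptic_surface}. The one real difference is in the first step. The paper shows $\Bs|\omega_X|$ is contained in finitely many fibers by proving $f_*\omega_X$ is invertible (using the torsion-freeness theorem and cohomology and base change), deriving a canonical bundle formula $\omega_X \simeq f^*f_*\omega_X \otimes \cO_X(E)$, and then arguing that $E$ is supported on the singular fibers. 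Your approach via semi-ampleness---writing $s^N = f^*t$ so that the zero divisor of any $s \in H^0(\omega_X)$ is a pullback from $C$---is more direct and sidesteps these auxiliary inputs. You are also more explicit than the paper in justifying the use of \pref{cr:iff} on the possibly singular, non-reduced Gorenstein fibers $X_c$; this point is left implicit in the paper both here and in \pref{th:elliptic_surface}.
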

\begin{proof}
Below we prove that
$
 \Bs | \omega_X |
$
is contained in the union of finitely many fibers of $f$.
Once it is shown, with the extra assumption
$
 \omega_X|_{X_c} \in \PPic^{0} _{X_c/\bfk}
$,
which was automatically fulfilled in the case of \pref{sc:kappa=1},
the arguments of \pref{sc:kappa=1} works without change.

The assumption $p_g(X)>0$ implies that
$
 p_g ( \omega _{ X_c } ) > 0
$ holds for a general fiber $X_c$.
Since $X$ is irreducible and $C$ is a non-singular curve, the morphism $f$ is flat
(\cite[Chapter III, Proposition 9.7]{Hartshorne}). Combined with the torsion-freeness
\cite[Theorem 2.1]{MR825838} and the theory of cohomology and base change
\cite[Chapter III,
Theorem 12.11]{Hartshorne}, we see that the direct image $f_*\omega_X$ is an
invertible sheaf.
The natural injective morphism
$
 f^* f_* \omega_X \to \omega_X
$
provides us with an effective divisor $E$ on $X$
which fits in the canonical bundle formula
\begin{equation}
 \omega_X\simeq f^*f_*\omega_X \otimes \cO _X(E).
\end{equation}
This formula is a generalization of \eqref{eq:Kodaira}.

Arguing as in \cite[Proof of Theorem 12.1, Chapter V]{Barth-Hulek-Peters-Van_de_Ven}, we see that
the morphism $f^*f_*\omega_X \to \omega_X$ is an isomorphism on smooth fibers:
in fact, for a smooth fiber $X_c$
we can find a section $\stilde\in (f_*\omega_X)_c$ which,
under the isomorphism
$
 f_*\omega_X\otimes_{\cO_C}\bfk(c)
 \simto
 H^0(X_c,\omega_X|_{X_c}),
$
corresponds to the global trivialization of $\omega_X|_{X_c} \simeq \omega_{X_c}$.
Since $f$ is projective, there exists an open neighborhood $U\ni c$ such that $\stilde$ is
well-defined and vanishes nowhere on $f^{-1}(U)$. This shows that the morphism
$f^*f_*\omega_X\to\omega_X$ is surjective (and hence is an isomorphism) on $f^{-1}(U)$.
Thus we see that $E$ is contained in the union of the singular fibers of $f$.

Now since
\begin{align}
 H ^{ 0 } \lb X, f ^{ * } f _{ * } \omega _{ X } \rb
 \simeq
 H ^{ 0 } \lb C, f _{ * } \omega _{ X } \rb
 \simeq
 H ^{ 0 } \lb X, \omega _{ X } \rb \neq 0
\end{align}
by the assumption, it follows from the sequence of maps
\begin{align}
 H ^{ 0 } \lb C, f _{ * } \omega _{ X } \rb
 \xto[\simeq]{f ^{ * }}
 H ^{ 0 } \lb X, f ^{ * } f _{ * } \omega _{ X } \rb
 \xto[]{ \otimes s _{ E } }
 H ^{ 0 } \lb X, \omega _{ X } \rb,
\end{align}
where
$
 s _{ E }
$
is the section of the line bundle
$
 \cO _{ X } ( E )
$
which tautologically corresponds to $E$,
that there exists an effective canonical divisor of $X$ whose support is contained in the union of finitely many fibers and the support of $E$.
Thus we conclude the proof.
\end{proof}

Let
$
 \PPic^{\tau} \subset \PPic
$
be the subscheme of numerically trivial line bundles
(see \cite[Section 9.6]{MR2222646}).
If
$
 \PPic^0_{X_c}=\PPic^{\tau}_{X_c}
$
holds for any singular fiber
$
 X_c
$,
since the morphism $f$ is defined by some multiple of $\omega_X$,
the last assumption of
\pref{th:when kappa=1} will be automatically satisfied.
This is always the case if $\dim X_c = 1$, but not in general.
Actually, even worse, the following example
due to Keiji Oguiso satisfies all the assumptions of \pref{th:when kappa=1}
but the last one. The authors are not sure if its derived category
admits a non-trivial SOD or not.

\begin{example}
In this example we assume
$
 \bfk = \bC
$
for simplicity.
Fix an integer $n\ge 4$ such that $n+1$ is a prime number.
We construct a minimal $n$-fold $X$ of $\kappa{(X)}=1$ and $p_g(X)>0$ such that
the canonical morphism
$
 f \colon X \to C
$
has a singular fiber
$
 X_c 
$
which satisfies
$
 \omega _{ X } | _{ X_c } \nin \PPic^0 _{ X_c }
$.

Consider the Fermat hypersurface
 $ Y = \lb \sum _{ i = 0 } ^{ n } X _i ^{ n + 1 } = 0 \rb \subset \bP ^n$.
It is a smooth projective Calabi-Yau ($n-1$)-fold in the strict sense;
i.e., 
$
 H ^{ i } ( \cO_Y )
 =
 0
$
for
$
 i = 1, 2, \dots, n - 2
$
and
$
 \omega_Y \simeq \cO_Y
$.
Note that one can describe the unique (up to constant) global section of
$
 \omega _{ Y }
$
as the residue top form
\begin{equation}
 \psi
=
\Res_{Y}
\frac
{\sum_{i=0}^{n} (-1)^i X_i d X_0 \wedge \cdots \wedge \widehat{d X_i} \wedge \cdots \wedge dX_n}
{\sum_iX_i^{n+1}}.
\end{equation}

Pick a primitive
$
 ( n + 1 )
$th root of unity
$
 \zeta
$
to define the action of the cyclic group
$
 G = \bZ / ( n + 1 ) \bZ
$
on $Y$ given as
$
 X_i \mapsto \zeta^i X_i
$.
Since we assumed that $ n+1 $ is a prime number, this action is free and hence
we obtain the non-singular quotient
$
 Z = Y / G
$.
Since the top form $\psi$ is easily seen to be
$
 G
$-invariant, it follows that
$Z$ is also a Calabi-Yau $(n-1)$-fold.

Let
$
 C'
$
be the smooth projective model of the affine curve
$
 ( y^2-x^{4(n+1)}+1 = 0 ) \subset \bA ^2,
$
and let $G$ act on $C'$ by 
$
 ( x, y ) \mapsto ( \zeta x, y )
$.
This action is effective but not free.
As can easily be seen,
$
 g ( C' ) = 2 ( n + 1 )
$
and
$
 \gamma = ( y ^{ - 1 } x ^n ) d x
$
defines an $G$-invariant regular $1$-form on $C'$.

Now consider the \'etale quotient
$
 \pi \colon Y \times C' \to ( Y \times C' ) / G =: X
$.
Since
$
 \psi \boxtimes \gamma \in
 H^0 ( Y \times C', \omega_{Y\times C'})^G \simeq H^0 ( X, \omega_X ),
$
we see $p_g(X)>0$.
Also since
$
 \omega _{ Y \times C' } \simeq \pi^* \omega _X
$,
$X$ is minimal and
$\omega_X$ is not trivial. Combined with the inequalities
$
 0 \le \kappa { ( X ) } \le \kappa { ( Y \times C' ) }
 = 1
$,
we see
$
 \kappa { ( X ) }
 = 1
$.
Also it is easily seen that the algebraic fiber space
$
 f \colon X \to C'/G = \colon C
$
is induced by the pluri-canonical linear system of
$
 X
$.

Let $c\in C$ be a branch point of $C'\to C$.
By an abuse of notation we write
$
 (X_c) _{\mathrm{red}} = Z
$, so that
$
 X_c = (n+1) Z
$.
Here we claim that
$
 \cO_{X_c} ( K_X ) \nin \PPic^0_{X_c/\bfk}
$.
To see this we prove
$
 \cO_Z ( K_X ) \nin \PPic^0_{Z/\bfk}
$,
which in turn is equivalent to
$
 \cO_Z ( Z ) \not \simeq \cO_Z
$.

Take a Stein open neighborhood
$
 c \in V \subset C
$
such that
$
 X_c = (n+1)Z
$
is a deformation retract of
$
 U  =f^{-1} ( V ) \subset X
$
(see
\cite[Chapter I, Theorem 8.8]{Barth-Hulek-Peters-Van_de_Ven}).
Consider the following commutative diagram with exact rows.

\begin{equation}\label{eq:diagram}
 \xymatrix{
H^1(U,\cO_U) \ar[r] \ar[d] & H^1(U,\cO^{*}_U) \ar[r]^{c_1} \ar[d] &
H^2(U,\bZ) \ar[r] \ar[d]_{\simeq} & H^2(U,\cO_U) \ar[d] \\
H^1(X_c,\cO_{X_c}) \ar[r] \ar[d] & H^1(X_c,\cO^*_{X_c}) \ar[r]^{c_1} \ar[d] &
H^2(X_c,\bZ) \ar[r] \ar[d]_{\simeq} & H^2(X_c,\cO_{X_c}) \ar[d] \\
H^1(Z,\cO_Z) \ar[r]  & H^1(Z,\cO^*_Z) \ar[r]^{c_1} &
H^2(Z,\bZ) \ar[r] & H^2(Z,\cO_Z)\\}
\end{equation}

We check that the terms in the 1st and the 4th columns in the diagram
\eqref{eq:diagram} all vanish.
First, since $Z$ is a Calabi-Yau $(n-1)$-fold with $n\ge 4$, $H^i(Z,\cO_Z)=0$ for
$i=1,2$.
For the remaining four terms, note first that the higher direct image sheaves $\bR ^if_*\cO_X$
are locally free for all $i\ge 0$ due to the torsion-freeness
theorem \cite[Theorem 2.1]{MR825838}, relative duality, and the fact that the base space $C$ is a
non-singular curve.
From this we obtain the isomorphisms
$
 \bR ^i f_* \cO_X \otimes \bC ( t ) \simeq H^i ( X_t, \cO _{ X_t } )
$
(\cite[Chapter III, Theorem 12.11]{Hartshorne}).
Since $H^i(X_t,\cO_{X_t})=0$ for $i=1,2$ and general $t$,
we obtain $\bR ^if_*\cO_X=0$ for $i=1,2$ and hence the desired vanishings.

As a result, it turns out that
the six terms in the 2nd and the 3rd columns in \eqref{eq:diagram}
are isomorphic to one another.
Since we can easily check that $\cO_U(Z)\in H^1(U,\cO^*_U)$ is a $(n+1)$-torsion non-trivial
line bundle, so is $\cO_{Z}(Z)$.
\end{example}
\begin{remark}
The structure sheaf $\cO_Z$ is \emph{not} an exceptional object.
Actually one can easily see from the calculations above that
$
 \Ext^{n-1}_X ( \cO_Z, \cO_Z ) \neq 0
$.
\end{remark}

%
%

\section{Generalization to twisted sheaves}
\label{sc:twisted_sheaves}

Most of the results we have established so far can be generalized to
derived categories of \emph{twisted} coherent sheaves without essential change.

\begin{definition}
A \emph{cohomological Brauer class} of a scheme
$
 X
$
is an element
$
 \alpha \in
 \Br' ( X ) :=
 H^2 _{ et } ( X, \cO_X^* )
$.
A pair
$
 ( X, \alpha )
$
will be called a
\emph{cohomological Brauer pair}.
\end{definition}

Given such a pair
$
 ( X, \alpha )
$,
we can define the abelian category
$
 \coh ( X, \alpha )
$
of
$
 \alpha
$-twisted coherent sheaves.
When
$
 X
$
is defined over a field
$
 \bfk
$,
it comes with the structure of a
$
 \bfk
$-linear category.

Fix an \'etale cover
$
 \cU = ( U_i ) _{ i \in I }
$
of
$
 X
$
on which the cohomology class
$
 \alpha
$
is represented by a \v{C}ech cocycle
\begin{equation}
 \alpha = ( \alpha _{ i j k } ) _{ i, j, k \in I }
 \in
 \Zv^2 ( \cU, \cO ^{ * } )
 =
 \prod _{ i, j, k \in I } H^0 ( U _{ i j k }, \cO ^{ * } ),
\end{equation}
where
$
 U _{ i j k }
 =
 U_i \times _{ X } U_j \times _{ X } U_k
$
(by an abuse of notation,
we used the same symbol
$
 \alpha
$
to describe its representative).
Then an
$
 \alpha
$-twisted coherent sheaf
$
 F
$
is a collection of coherent sheaves
$
 F_i \in \coh U_i
$
and isomorphisms
$
 \varphi _{ i j } \colon F_j | _{ U _{ i j } } \simto F_i | _{ U _{ i j } }
$
which satisfy the
$
 \alpha
$-twisted cocycle conditions
\begin{equation}
 \varphi _{ i j } \varphi _{ j k } \varphi _{ k i } = \alpha _{ i j k } \cdot \id _{ U _{ i j k } }
 \colon
 F _{ i } |_{ U _{ i j k } } \simto F _{ i } |_{ U _{ i j k } }.
\end{equation}
A morphism between such data is a collection of
$
 \cO _{ U_i }
$-homomorphisms
which satisfy the obvious consistency.
Then we can check that thus obtained category is abelian
and is independent of the choice of a representative of
$
 \alpha
$
(see \cite[Lemma 1.2.3]{caldararu2000derived}).
We write
$
 \bfD ( X, \alpha ) = \bfD ^b \coh ( X, \alpha )
$,
so that
$
 \bfD ( X, 0 ) = \bfD ^{ b } \lb \coh X \rb
$.

\begin{definition}\label{df:support_for_twisted_sheaves}
For an
$
 \alpha
$-twisted coherent sheaf
$
 F \in \coh ( X, \alpha )
$,
its support
$
 \Supp F
$
is defined as the closed subscheme
$
 \Spec _{ X } \lb \Image ( \cO_X \to \cEnd ( F ) ) \rb
 \subset
 X
$.
For an object
$
 F \in \bfD ( X, \alpha )
$,
its support is defined as
$
 \Supp F = \bigcup _{ i } \Supp \cH^i ( F ) _{ \mathrm{red} }
$.
\end{definition}

For
$
 \alpha
$-twisted sheaves
$
 F
$
and
$
 G
$,
we can define the (untwisted!) coherent sheaf of homomorphisms
$
 \cHom ( F, G ) \in \coh ( X )
$.
The following fact is an easy consequence of this observation.

\begin{lemma}\label{lm:Serre_functor_for_twisted_sheaves}
Let
$
 ( X, \alpha )
$
be a smooth proper cohomological Brauer pair. Then
$
 \otimes \omega _{ X } [ \dim X ]
$
is the Serre functor of
$
 \bfD ( X, \alpha )
$.
\end{lemma}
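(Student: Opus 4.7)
The plan is to reduce Serre duality for $D(X, \alpha)$ to the classical Grothendieck--Serre duality on $X$, exploiting the observation immediately preceding the statement that $\cRHom$ between two $\alpha$-twisted objects lies in the untwisted category $D(X)$.

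First, for $F, G \in D(X, \alpha)$ I would establish the natural identification
\begin{equation*}
\RHom_{D(X,\alpha)}(F, G) \simeq \bR\Gamma\bigl(X, \cRHom(F, G)\bigr),
\end{equation*}
where the right-hand side is computed in $D(X)$, via the Grothendieck spectral sequence together with the fact that $\Hom$ out of $\cO_X$ is $\Gamma$. Classical Serre duality on $X$ (with dualizing complex $\omega_X[\dim X]$) applied to the perfect complex $\cRHom(F, G) \in D(X)$ then yields
\begin{equation*}
\RHom_{D(X, \alpha)}(F, G)^{\vee} \simeq \bR\Gamma\bigl(X, \cRHom(\cRHom(F, G), \omega_X[\dim X])\bigr).
\end{equation*}
The key remaining step is a biduality-type identification $\cRHom(\cRHom(F, G), \cO_X) \simeq \cRHom(G, F)$; granted this, tensoring with $\omega_X[\dim X]$ and using $\cRHom(G, F) \otimes \omega_X[\dim X] \simeq \cRHom(G, F \otimes \omega_X[\dim X])$ together with the first displayed identification applied to the pair $(G, F \otimes \omega_X[\dim X])$ gives the desired Serre duality isomorphism.

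To justify the biduality: since $X$ is smooth, every bounded complex of coherent twisted sheaves is perfect, so locally on a trivializing cover $\{U_i\}$ of the cocycle $\alpha$ one represents $F|_{U_i}$ and $G|_{U_i}$ by bounded complexes of locally free $\alpha$-twisted sheaves $F_i$, $G_i$. Then $\cRHom(G_i, F_i) \simeq G_i^{\vee} \otimes F_i$ is untwisted, and $\cRHom(G_i^{\vee} \otimes F_i, \cO_{U_i}) \simeq G_i \otimes F_i^{\vee} \simeq \cRHom(F_i, G_i)$ by the standard biduality for perfect complexes on the smooth scheme $U_i$. Since both sides of the global statement are untwisted and the transition data are composed of those for $F$ and $G$, in which the $\alpha$- and $\alpha^{-1}$-twists cancel, the local identifications glue trivially.

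The main obstacle is the naturality of these constructions in $F$ and $G$ and their compatibility with the Grothendieck--Serre duality pairing, so that they assemble into an isomorphism of bifunctors $\RHom_{D(X, \alpha)}(-, -)^{\vee} \simeq \RHom_{D(X, \alpha)}(-, - \otimes \omega_X[\dim X])$. This reduces to the naturality of Serre duality on $X$ together with the cocycle cancellation above, and is a routine but somewhat tedious verification.
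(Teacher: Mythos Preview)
Your sketch is correct and follows the natural line of argument: use that $\cRHom(F,G)$ lands in the untwisted $D(X)$ (the observation the paper records just before the lemma), apply ordinary Grothendieck--Serre duality there, and then undo the $\cRHom$ via biduality for perfect complexes, which is legitimate because smoothness makes every bounded twisted complex perfect on an \'etale trivializing cover. The only nitpick is terminological: on a trivializing $U_i$ the restrictions $F|_{U_i},G|_{U_i}$ are ordinary (not ``$\alpha$-twisted'') complexes, but this does not affect your argument.

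As for comparison with the paper: the paper does not actually give a proof, it simply cites \cite[Example~1.4.3]{navas2010fourier}. Your argument is essentially the standard one and is almost certainly what that reference carries out, so there is nothing substantively different to compare.
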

\begin{proof}
See \cite[Example 1.4.3]{navas2010fourier}
\end{proof}

The next lemma is a direct consequence of the definition of
twisted coherent sheaves.

\begin{lemma}\label{lm:point_module}
For any closed point
$
 x \in X
$,
its structure sheaf
$
 \bfk ( x )
$
gives rise to the
\emph{``$\alpha$-twisted skyscraper sheaf at
$
 x
$''}
$
 \in \bfD ( X, \alpha )
$
for any cohomological Brauer class
$
 \alpha
$.
\end{lemma}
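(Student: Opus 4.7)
The plan is to reduce the claim to the triviality of Brauer cohomology of a point over an algebraically closed field and then construct the twisted structure directly on a \v{C}ech representative. First I would fix a representative $\alpha = (\alpha_{ijk}) \in \Zv^2(\cU, \cO^*)$ on an \'etale cover $\cU = \{U_i \to X\}_{i \in I}$ and set $V_i = U_i \times_X \Spec \bfk(x)$. Since $\bfk$ is algebraically closed by our standing convention and $\bfk(x) \simeq \bfk$, each $V_i$ is a disjoint union of copies of $\Spec \bfk$, and the restriction $(\alpha_{ijk}|_{V_{ijk}})$ is a \v{C}ech $2$-cocycle for $\cO^*$ on an \'etale cover of $\Spec \bfk$.

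The key observation is that $H^i_{et}(\Spec \bfk, \cO^*) = 0$ for $i > 0$ when $\bfk$ is algebraically closed. Since $\coh(X, \alpha)$ does not depend on the chosen \v{C}ech representative, after refining $\cU$ if necessary I may assume that $(\alpha_{ijk}|_{V_{ijk}}) = \delta \beta$ for some $1$-cochain $\beta = (\beta_{ij})$ with $\beta_{ij} \in \Gamma(V_{ij}, \cO^*)$.

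To assemble the $\alpha$-twisted sheaf, I would take $F_i$ to be the pullback of $\bfk(x)$ to $U_i$, which is the skyscraper sheaf supported on $V_i \subset U_i$. Because automorphisms of $F_i|_{U_{ij}}$ are encoded by units in $\Gamma(V_{ij}, \cO^*)$, the cochain $\beta$ provides well-defined gluing isomorphisms $\varphi_{ij} \colon F_j|_{U_{ij}} \simto F_i|_{U_{ij}}$. The $\alpha$-twisted cocycle identity $\varphi_{ij} \varphi_{jk} \varphi_{ki} = \alpha_{ijk} \cdot \id$ is an equality of sections of $\cO^*$ on $V_{ijk}$ (since both sides act on a sheaf supported there), and it is therefore guaranteed by the coboundary relation $\delta \beta = \alpha|_x$. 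The datum $(F_i, \varphi_{ij})$ exhibits $\bfk(x)$ as an $\alpha$-twisted coherent sheaf on $X$. There is no substantial obstacle here; the only conceptual ingredient is the vanishing of Brauer cohomology at the residue field, which is immediate from the assumption that $\bfk$ is algebraically closed.
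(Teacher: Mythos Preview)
Your argument is correct and is essentially an explicit unpacking of what the paper means when it says the lemma is ``a direct consequence of the definition of twisted coherent sheaves.'' The paper gives no further details, so there is nothing to compare beyond noting that you have made explicit the one nontrivial ingredient, namely the vanishing of $H^2_{\mathrm{et}}(\Spec\bfk,\bG_m)$ for $\bfk$ algebraically closed, which lets you trivialize $\alpha$ over the point and produce the gluing cochain $\beta$.
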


It is sometimes convenient to restrict ourselves to \emph{Brauer classes}.
Brauer classes form a subgroup
$
 \Br ( X ) \subset \Br' ( X )
$,
and they are characterized by either of the following properties
(see \cite[Theorem 1.3.5]{caldararu2000derived}).

\begin{itemize}
 \item There exists a sheaf of Azumaya algebras which represents the class $\alpha$.
 \item There exists a non-zero locally free $\alpha$-twisted sheaf of finite rank.
\end{itemize}

\noindent
The difference of these two notions are very subtle.
In fact, it is shown in \cite[Theorem 1.1]{de2003result} that
$
 \Br = \Br'
$
holds on any projective scheme.

Now that we have prepared basic lemmas,
we can show

\begin{theorem}
Let
$
 ( X, \alpha )
$
be a smooth proper Brauer pair.
Assume that
$
 \omega_X
$
is trivial on an open neighborhood of the canonical base locus
$
 \Bs | K_X |
$.
Then
$
 \bfD ( X, \alpha )
$
admits no SOD.
\end{theorem}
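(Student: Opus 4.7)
The plan is to run the proof of \pref{th:locally free} essentially verbatim in the twisted setting. Since $X$ is a scheme, no stacky locus appears and every closed point is ``non-stacky''; the two substantive inputs I need are the twisted analogue of \pref{th:pg positive} controlling supports via global sections of $\omega_X$, and the twisted analogue of \pref{cr:iff} converting invariance under $\otimes\omega_X$ into triviality of the SOD.

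First I would set up the twisted auxiliary results. By \pref{lm:point_module} the skyscraper sheaves $\bfk(x)$ are $\alpha$-twisted, and together with the Serre functor \pref{lm:Serre_functor_for_twisted_sheaves} the proof of \pref{lm:useful} goes through unchanged. Next, since $\alpha$ is a genuine Brauer class (rather than merely cohomological), there exists a non-zero locally free $\alpha$-twisted sheaf of finite rank, and by \cite[Lemma 2.1.4]{caldararu2000derived} every $\alpha$-twisted coherent sheaf is a quotient of such a sheaf. This provides a spanning class of locally free twisted sheaves, and the proof of \pref{lm:no_OD} then yields that $D(X,\alpha)$ admits no non-trivial OD. Combined with the twisted Serre duality, the proof of \pref{cr:iff} gives the twisted analogue: an SOD $D(X,\alpha)=\la\cA,\cB\ra$ is trivial if and only if $\cA\otimes\omega_X\subset\cA$ (equivalently, if and only if $\cB\otimes\omega_X\subset\cB$).

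Next I would mimic the proof of \pref{th:pg positive}. Since $\omega_X$ is an untwisted line bundle, tensoring with it preserves $\alpha$-twisting, so given a section $s\in H^0(X,\omega_X)$ with $s(x)\neq 0$ for some $x\notin\Bs|\omega_X|$, multiplication by $s$ defines a morphism $a\to a\otimes\omega_X$ for any $a\in D(X,\alpha)$. The argument of \pref{th:pg positive} applied to the triangle $b\to\bfk(x)\to a\to b[1]$ then forces $\bfk(x)\in\cA$ or $\bfk(x)\in\cB$; the required vanishing $\Hom(a,b\otimes\omega_X[1])\simeq\Hom(b,a[\dim X-1])^{\vee}=0$ is precisely the twisted Serre duality. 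The only step that does not transfer literally is the proof that all closed points outside $\Bs|\omega_X|$ lie on the same side, since the original argument used the decomposition of $\cO_X$, which is not $\alpha$-twisted when $\alpha\neq 0$. We replace $\cO_X$ by a non-zero locally free $\alpha$-twisted sheaf $F$ provided by the previous step; since $\Supp F=X$, the decomposition triangle for $F$ together with the irreducibility of $X$ gives the desired dichotomy.

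Finally, suppose without loss of generality that $\bfk(x)\in\cA$ for every $x\notin\Bs|\omega_X|$. The twisted \pref{lm:useful} then implies that every object of $\cB$ is supported in $\Bs|\omega_X|$. The proof of \pref{lm:invariant} is purely support-theoretic and transports to the twisted category without modification; combined with the hypothesis that $\omega_X$ is trivial on a neighborhood of $\Bs|\omega_X|$ it gives $\cB\otimes\omega_X=\cB$, and the twisted \pref{cr:iff} concludes. The main subtlety throughout is the replacement of $\cO_X$ and of the resolution property by a locally free $\alpha$-twisted sheaf; this is exactly where the Brauer class hypothesis (as opposed to merely cohomological) enters, and explains why the statement is formulated for Brauer pairs.
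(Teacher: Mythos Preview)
Your proposal is correct and follows essentially the same route as the paper: use \pref{lm:point_module} and \pref{lm:Serre_functor_for_twisted_sheaves} to carry over \pref{lm:useful}, use the Brauer hypothesis together with \cite[Lemma 2.1.4]{caldararu2000derived} (and the resolution property for proper schemes) to get locally free twisted sheaves as a spanning class and hence the twisted \pref{lm:no_OD}/\pref{cr:iff}, and then rerun the proof of \pref{th:pg positive}/\pref{th:locally free}, replacing $\cO_X$ by a non-zero locally free $\alpha$-twisted sheaf in the irreducibility step. The paper's own proof is a terser version of exactly this.
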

\begin{proof}
Since we assumed
$
 X
$
is proper, it satisfies the resolution property by
\cite[Theorem 1.2]{MR2108211}.
Combined with the assumption
$
 \alpha \in \Br ( X )
$,
\cite[Lemma 2.1.4]{caldararu2000derived}
implies that any coherent
$
 \alpha
$-twisted sheaf on
$
 X
$
is a quotient of a locally free
$
 \alpha
$-twisted sheaf of finite rank.
Hence those sheaves form a spanning class of
$
 \bfD ( X, \alpha )
$,
and the original proof of \pref{lm:no_OD} can be used without change
to show the non-existence of OD.
Then the original proof of
\pref{th:locally free} works with a minor modification,
by replacing the corresponding lemmas with
\pref{lm:Serre_functor_for_twisted_sheaves}
and
\pref{lm:point_module}.
In order to show that both of the SOD summands can not contain
closed points at the same time, one can use a locally free twisted sheaf instead of
$
 \cO_X
$.
\end{proof}

Similarly, by using \cite[Lemma 2.1.4]{caldararu2000derived}, the original proof of
\pref{th:negative_definite}
works without change and we obtain

\begin{theorem}
Let
$
 ( X, \alpha )
$
be a smooth projective Brauer pair such that
$
 X
$
is a minimal surface of general type satisfying
$
\dim_{\bfk} H^0(X, \omega_X) > 1
$
and the condition
(*).
Then
$
 \bfD ( X, \alpha )
$
admits no SOD.
\end{theorem}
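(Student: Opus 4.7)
The plan is to replay the proof of Theorem \ref{th:negative_definite} verbatim in the twisted category $D(X,\alpha)$, substituting each ingredient by its twisted analogue. For any closed subset $Z \subset X$ I set $\cC_Z^{\alpha} := \{E \in D(X,\alpha) \mid \Supp E \subset Z\}$. Following the structure of the untwisted argument, via the twisted counterparts of Lemma \ref{lm:restriction} and Corollary \ref{cr:induction} the claim reduces to showing that $\cC_W^{\alpha}$ admits no SOD for every reduced connected one-cycle $W \subset \Bs|\omega_X|$, which I would establish by downward induction on the number of irreducible components of $W$ exactly as in the proof of Theorem \ref{th:negative_definite}.

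The first step is to check the twisted versions of the preliminary lemmas invoked in that proof. Nonexistence of orthogonal decompositions for $\cC_W^{\alpha}$ is handled as in the preceding twisted theorem: since $\alpha \in \Br(X)$, \cite[Lemma 2.1.4]{caldararu2000derived} produces a spanning class of locally free $\alpha$-twisted sheaves (on thickenings of $W$), and the argument of Lemma \ref{lm:no_OD} applies without change. The analogue of Corollary \ref{cr:iff} transfers because the Serre functor on $D(X,\alpha)$ is still $\otimes\, \omega_X[\dim X]$ by Lemma \ref{lm:Serre_functor_for_twisted_sheaves}.

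Next I would adapt Proposition \ref{pr:closed_points_local}. Every step in its proof uses only untwisted data about $\omega_X$ on $X$: the skyscraper $\bfk(x)$ is $\alpha$-twisted by Lemma \ref{lm:point_module}; multiplication by a section $s_m \in H^0(mZ, \omega_X|_{mZ})$ is the morphism $E \to E \otimes \omega_X$ obtained by tensoring with the \emph{untwisted} line bundle $\omega_X$, and so preserves the $\alpha$-twisted structure; and Serre duality in the form of Lemma \ref{lm:Serre_functor_for_twisted_sheaves} is what drives the vanishing $f|_U = 0$. Lemma \ref{lm:non-vanishing} is a purely geometric statement on $X$ and applies unchanged. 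Granting these, Corollary \ref{cr:induction} transfers to $\cC_W^{\alpha}$, and the inductive argument of the proof of Theorem \ref{th:negative_definite} (using condition (*) to obtain the strict inclusion $B \subsetneq W$ and then shrinking $W$) reduces the nonexistence of SODs on $\cC_W^{\alpha}$ to that on $\cC_{W'}^{\alpha}$ for cycles $W'$ with strictly fewer irreducible components.

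The main obstacle is the bookkeeping around Remark \ref{rm:rouquier} in the twisted setting: one has to verify that an $\alpha$-twisted bounded complex supported on $W$ is isomorphic to the pushforward of an $\alpha|_{mW}$-twisted complex on some thickening $mW$. Because the twisted structure is \'etale-local and the Artin--Rees argument of \cite[Lemma 7.41]{MR2434186} is local in nature, this is a routine if tedious check, after which the multiplication-by-$s_m$ morphism is canonically defined and the entire scheme-theoretic argument of Theorem \ref{th:negative_definite} goes through.
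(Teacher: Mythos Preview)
Your proposal is correct and follows exactly the approach the paper takes: the paper's proof is a one-liner stating that, by \cite[Lemma 2.1.4]{caldararu2000derived}, the original proof of \pref{th:negative_definite} works without change. You have simply spelled out the verification that each ingredient (skyscrapers as twisted sheaves, the Serre functor, multiplication by local sections of $\omega_X$, the Artin--Rees support lemma, and the spanning class of locally free twisted sheaves) transfers to the twisted setting, which is precisely what the paper's sentence asserts implicitly.
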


\begin{remark}
We expect that the other results can be generalized as well with a bit more effort.
For \pref{th:rigidity}, all we need is the
existence of classical generators in the derived category of
twisted coherent sheaves.
Similarly, for \pref{th:when kappa=1},
we have to check that the base change theorem
\cite[Theorem 5.6]{MR2801403} works for Brauer pairs as well.
\end{remark}

%
%
%

%
%

\bibliographystyle{alpha}
\bibliography{bibs}

\newcommand{\etalchar}[1]{$^{#1}$}
\def\cprime{$'$} \def\cprime{$'$}
\begin{thebibliography}{BHPVdV04}

\bibitem[AO13]{MR3096524}
Valery Alexeev and Dmitri Orlov.
\newblock Derived categories of {B}urniat surfaces and exceptional collections.
\newblock {\em Math. Ann.}, 357(2):743--759, 2013.

\bibitem[BGvBKS15]{MR3361723}
Christian B{\"o}hning, Hans-Christian Graf~von Bothmer, Ludmil Katzarkov, and
  Pawel Sosna.
\newblock Determinantal {B}arlow surfaces and phantom categories.
\newblock {\em J. Eur. Math. Soc. (JEMS)}, 17(7):1569--1592, 2015.

\bibitem[BGvBS14]{MR3177299}
Christian B{{\"o}}hning, Hans-Christian Graf~von Bothmer, and Pawel Sosna.
\newblock On the {J}ordan-{H}{\"o}lder property for geometric derived
  categories.
\newblock {\em Adv. Math.}, 256:479--492, 2014.

\bibitem[BHPVdV04]{Barth-Hulek-Peters-Van_de_Ven}
Wolf~P. Barth, Klaus Hulek, Chris A.~M. Peters, and Antonius Van~de Ven.
\newblock {\em Compact complex surfaces}, volume~4 of {\em Ergebnisse der
  Mathematik und ihrer Grenzgebiete. 3. Folge. A Series of Modern Surveys in
  Mathematics [Results in Mathematics and Related Areas. 3rd Series. A Series
  of Modern Surveys in Mathematics]}.
\newblock Springer-Verlag, Berlin, second edition, 2004.

\bibitem[BK89]{Bondal-Kapranov_Serre}
Alexey Bondal and Mikhail Kapranov.
\newblock Representable functors, {S}erre functors, and reconstructions.
\newblock {\em Izv. Akad. Nauk SSSR Ser. Mat.}, 53(6):1183--1205, 1337, 1989.

\bibitem[Blo75]{MR0371891}
Spencer Bloch.
\newblock {$K_{2}$} of {A}rtinian {$Q$}-algebras, with application to algebraic
  cycles.
\newblock {\em Comm. Algebra}, 3:405--428, 1975.

\bibitem[BM77]{MR0491719}
Enrico Bombieri and David Mumford.
\newblock Enriques' classification of surfaces in char. {$p$}. {II}.
\newblock In {\em Complex analysis and algebraic geometry}, pages 23--42.
  Iwanami Shoten, Tokyo, 1977.

\bibitem[BN93]{MR1214458}
Marcel B{{\"o}}kstedt and Amnon Neeman.
\newblock Homotopy limits in triangulated categories.
\newblock {\em Compositio Math.}, 86(2):209--234, 1993.

\bibitem[BO95]{Bondal-Orlov_semiorthogonal}
Alexey Bondal and Dmitri Orlov.
\newblock Semiorthogonal decomposition for algebraic varieties.
\newblock arXiv:alg-geom/9506012, 1995.

\bibitem[Bon89]{Bondal_RAACS}
Alexey Bondal.
\newblock Representations of associative algebras and coherent sheaves.
\newblock {\em Izv. Akad. Nauk SSSR Ser. Mat.}, 53(1):25--44, 1989.

\bibitem[BvBS13]{MR3062745}
Christian B{{\"o}}hning, Hans-Christian~Graf von Bothmer, and Pawel Sosna.
\newblock On the derived category of the classical {G}odeaux surface.
\newblock {\em Adv. Math.}, 243:203--231, 2013.

\bibitem[BvdB03]{Bondal-van_den_Bergh}
A.~Bondal and M.~van~den Bergh.
\newblock Generators and representability of functors in commutative and
  noncommutative geometry.
\newblock {\em Mosc. Math. J.}, 3(1):1--36, 258, 2003.

\bibitem[Cal00]{caldararu2000derived}
Andrei~Horia Caldararu.
\newblock {\em Derived categories of twisted sheaves on Calabi-Yau manifolds}.
\newblock PhD thesis, Cornell University, 2000.

\bibitem[CD89]{MR986969}
Fran{\c{c}}ois~R. Cossec and Igor~V. Dolgachev.
\newblock {\em Enriques surfaces. {I}}, volume~76 of {\em Progress in
  Mathematics}.
\newblock Birkh\"auser Boston Inc., Boston, MA, 1989.

\bibitem[dJ03]{de2003result}
Aise~Johan de~Jong.
\newblock A result of {G}abber.
\newblock {\em preprint}, 25:36--57, 2003.

\bibitem[FGI{\etalchar{+}}05]{MR2222646}
Barbara Fantechi, Lothar G{\"o}ttsche, Luc Illusie, Steven~L. Kleiman, Nitin
  Nitsure, and Angelo Vistoli.
\newblock {\em Fundamental algebraic geometry}, volume 123 of {\em Mathematical
  Surveys and Monographs}.
\newblock American Mathematical Society, Providence, RI, 2005.
\newblock Grothendieck's FGA explained.

\bibitem[GS13]{MR3077896}
Sergey Galkin and Evgeny Shinder.
\newblock Exceptional collections of line bundles on the {B}eauville surface.
\newblock {\em Adv. Math.}, 244:1033--1050, 2013.

\bibitem[Har77]{Hartshorne}
Robin Hartshorne.
\newblock {\em Algebraic geometry}.
\newblock Springer-Verlag, New York, 1977.
\newblock Graduate Texts in Mathematics, No. 52.

\bibitem[HL10]{Huybrechts-Lehn}
Daniel Huybrechts and Manfred Lehn.
\newblock {\em The geometry of moduli spaces of sheaves}.
\newblock Cambridge Mathematical Library. Cambridge University Press,
  Cambridge, second edition, 2010.

\bibitem[Hor79]{MR517773}
Eiji Horikawa.
\newblock Algebraic surfaces of general type with small {$c^{2}_{1}$}. {IV}.
\newblock {\em Invent. Math.}, 50(2):103--128, 1978/79.

\bibitem[HT15]{hosono2015derived}
Shinobu Hosono and Hiromichi Takagi.
\newblock Derived categories of artin-mumford double solids.
\newblock {\em arXiv preprint arXiv:1506.02744}, 2015.

\bibitem[Huy06]{MR2244106}
D.~Huybrechts.
\newblock {\em Fourier-{M}ukai transforms in algebraic geometry}.
\newblock Oxford Mathematical Monographs. The Clarendon Press Oxford University
  Press, Oxford, 2006.

\bibitem[IK15]{MR3302614}
Colin Ingalls and Alexander Kuznetsov.
\newblock On nodal {E}nriques surfaces and quartic double solids.
\newblock {\em Math. Ann.}, 361(1-2):107--133, 2015.

\bibitem[Kaw04]{Kawamata_EDCSSS}
Yujiro Kawamata.
\newblock Equivalences of derived categories of sheaves on smooth stacks.
\newblock {\em Amer. J. Math.}, 126(5):1057--1083, 2004.

\bibitem[Kaw05]{Kawamata_LCBMDC}
Yujiro Kawamata.
\newblock Log crepant birational maps and derived categories.
\newblock {\em J. Math. Sci. Univ. Tokyo}, 12(2):211--231, 2005.

\bibitem[Kaw09]{MR2483950}
Yujiro Kawamata.
\newblock Derived categories and birational geometry.
\newblock In {\em Algebraic geometry---{S}eattle 2005. {P}art 2}, volume~80 of
  {\em Proc. Sympos. Pure Math.}, pages 655--665. Amer. Math. Soc., Providence,
  RI, 2009.

\bibitem[KM97]{MR1432041}
Se{\'a}n Keel and Shigefumi Mori.
\newblock Quotients by groupoids.
\newblock {\em Ann. of Math. (2)}, 145(1):193--213, 1997.

\bibitem[Knu71]{MR0302647}
Donald Knutson.
\newblock {\em Algebraic spaces}.
\newblock Lecture Notes in Mathematics, Vol. 203. Springer-Verlag, Berlin,
  1971.

\bibitem[Kol86]{MR825838}
J{\'a}nos Koll{\'a}r.
\newblock Higher direct images of dualizing sheaves. {I}.
\newblock {\em Ann. of Math. (2)}, 123(1):11--42, 1986.

\bibitem[Kuz06]{MR2238172}
Alexander Kuznetsov.
\newblock Hyperplane sections and derived categories.
\newblock {\em Izv. Ross. Akad. Nauk Ser. Mat.}, 70(3):23--128, 2006.

\bibitem[Kuz11]{MR2801403}
Alexander Kuznetsov.
\newblock Base change for semiorthogonal decompositions.
\newblock {\em Compos. Math.}, 147(3):852--876, 2011.

\bibitem[Kuz13]{kuznetsov2013simple}
Alexander Kuznetsov.
\newblock A simple counterexample to the {J}ordan-{H}\"older property for
  derived categories.
\newblock {\em arXiv preprint arXiv:1304.0903}, 2013.

\bibitem[LMB00]{MR1771927}
G{\'e}rard Laumon and Laurent Moret-Bailly.
\newblock {\em Champs alg\'ebriques}, volume~39 of {\em Ergebnisse der
  Mathematik und ihrer Grenzgebiete. 3. Folge. A Series of Modern Surveys in
  Mathematics [Results in Mathematics and Related Areas. 3rd Series. A Series
  of Modern Surveys in Mathematics]}.
\newblock Springer-Verlag, Berlin, 2000.

\bibitem[Mum68]{MR0249428}
David Mumford.
\newblock Rational equivalence of {$0$}-cycles on surfaces.
\newblock {\em J. Math. Kyoto Univ.}, 9:195--204, 1968.

\bibitem[Nav10]{navas2010fourier}
Hermes Jackson~Martinez Navas.
\newblock {\em Fourier-Mukai transform for twisted sheaves}.
\newblock PhD thesis, Universit{\"a}ts-und Landesbibliothek Bonn, 2010.

\bibitem[Oka11]{MR2838062}
Shinnosuke Okawa.
\newblock Semi-orthogonal decomposability of the derived category of a curve.
\newblock {\em Adv. Math.}, 228(5):2869--2873, 2011.

\bibitem[Orl92]{Orlov_PB}
Dmitri Orlov.
\newblock Projective bundles, monoidal transformations, and derived categories
  of coherent sheaves.
\newblock {\em Izv. Ross. Akad. Nauk Ser. Mat.}, 56(4):852--862, 1992.

\bibitem[Ros09]{rosay2009some}
Fabrice Rosay.
\newblock Some remarks on the group of derived autoequivalences.
\newblock {\em arXiv preprint arXiv:0907.3880}, 2009.

\bibitem[Rou08]{MR2434186}
Rapha{\"e}l Rouquier.
\newblock Dimensions of triangulated categories.
\newblock {\em J. K-Theory}, 1(2):193--256, 2008.

\bibitem[{Sta}17]{stacks-project}
The {Stacks Project Authors}.
\newblock exit{Stacks Project}.
\newblock \url{http://stacks.math.columbia.edu}, 2017.

\bibitem[Tot04]{MR2108211}
Burt Totaro.
\newblock The resolution property for schemes and stacks.
\newblock {\em J. Reine Angew. Math.}, 577:1--22, 2004.

\bibitem[Uen73]{MR0360582}
Kenji Ueno.
\newblock Classification of algebraic varieties. {I}.
\newblock {\em Compositio Math.}, 27:277--342, 1973.

\bibitem[Zuc03]{MR1980618}
Francesco Zucconi.
\newblock Surfaces with {$p_g=q=2$} and an irrational pencil.
\newblock {\em Canad. J. Math.}, 55(3):649--672, 2003.

\end{thebibliography}

\end{document}